\newtheorem{defn0}{Definition}[section]
\newtheorem{prop0}[defn0]{Proposition}
\newtheorem{thm0}[defn0]{Theorem}
\newtheorem{assump0}[defn0]{Assumption}
\newtheorem{lemma0}[defn0]{Lemma}
\newtheorem{corollary0}[defn0]{Corollary}
\newtheorem{example0}[defn0]{Example}
\newtheorem{remark0}[defn0]{Remark}
\newtheorem{conjecture0}[defn0]{Conjecture}
\newtheorem*{assumption*}{Assumption}
\newtheorem{question*}{Question}
\newenvironment{definition}{\medskip \begin{defn0}}{\end{defn0}}
\newenvironment{proposition}{\medskip \begin{prop0}}{\end{prop0}}
\newenvironment{theorem}{\medskip \begin{thm0}}{\end{thm0}}
\newenvironment{lemma}{\medskip \begin{lemma0}}{\end{lemma0}}
\newenvironment{corollary}{\medskip \begin{corollary0}}{\end{corollary0}}
\newenvironment{example}{\medskip \begin{example0}\rm}{\end{example0}}
\newenvironment{remark}{ \medskip\begin{remark0}\rm}{\end{remark0}}
\newenvironment{conjecture}{\medskip\begin{conjecture0}}{\end{conjecture0}}
\definecolor{MyBlue}{RGB}{0,101,189} % Pantone 300 (Wie TUMBlau)
\definecolor{MyRed}{RGB}{234, 114, 55} 
\definecolor{MyGreen}{RGB}{162,173,0}
\definecolor{MyYellow}{RGB}{246, 235, 97} %Pantone 100 
\DeclareMathOperator{\pa}{pa}
\DeclareMathOperator{\ch}{ch}
\DeclareMathOperator{\rk}{rank}
\DeclareMathOperator{\jpa}{jpa}
\DeclareMathOperator{\im}{Im}
\DeclareMathOperator{\initial}{in}
\DeclareMathOperator{\PD}{PD}
\title{Algebraic Sparse Factor Analysis}
\author{Mathias Drton}
\address{Munich Center for Machine Learning (MCML) and Department of Mathematics, School of Computation, Information and Technology, Technical University of Munich} 
\email{mathias.drton@tum.de}
\author{Alexandros Grosdos}
\address{Institute of Mathematics, University of Augsburg} 
\email{alexandros.grosdos@uni-a.de}
\author{Irem Portakal}
\address{Max Planck Institute for Mathematics in the Sciences, Leipzig} 
\email{mail@irem-portakal.de}
\author{Nils Sturma}
\address{Munich Center for Machine Learning (MCML) and Department of Mathematics, School of Computation, Information and Technology, Technical University of Munich} 
\email{nils.sturma@tum.de}
\keywords{factor analysis model, dimension, join of ideals, Gröbner basis, toric, edge ideal, hypergraph}
\subjclass{62H25, 62R01, 13F65, 14M25, 14N07} 
\begin{document}
\begin{abstract}
    Factor analysis is a statistical technique that explains correlations among observed random variables with the help of a smaller number of unobserved factors. In traditional full factor analysis, each observed variable is influenced by every factor. However, many applications exhibit interesting sparsity patterns, that is, each observed variable only depends on a subset of the factors. In this paper, we study such sparse factor analysis models from an algebro-geometric perspective. Under mild conditions on the sparsity pattern, we examine the dimension of the set of covariance matrices that corresponds to a given model. Moreover, we study algebraic relations among the covariances in sparse two-factor models. In particular, we identify cases in which a Gröbner basis for these relations can be derived via a 2-delightful term order and join of toric ideals of graphs.
\end{abstract}
\maketitle

\section{Introduction}\label{sec: introduction}

Factor analysis provides powerful statistical tools to analyze complex data
by representing a possibly large number of dependent random variables
as linear functions of a smaller number of underlying source variables, the factors.
Techniques from factor analysis have found widespread application in a variety of fields, including psychology \cite{horn1965arationale, reise2000factor, caprara1993thebig}, econometrics \cite{fan2008high, amann2016bayesian}, education \cite{schreiber2006reporting, beavers2013practical}, and epidemiology \cite{martines1998invited, santos2019principal}.  

Factor analysis models may be defined as follows. Consider an observed random vector $X=~(X_v)_{v \in V}$ that is indexed by a finite set $V$ and a vector  $Y=(Y_h)_{h \in \mathcal{H}}$ of unobserved random variables, called \emph{factors}, that is indexed by a finite set $\mathcal{H}$. In applications, the number of factors $|\mathcal{H}|$ is usually smaller than the number of observed variables $|V|$.
The factor analysis model postulates that the observed variables are linear functions of the factors and noise, i.e.,
\[
    X = \Lambda Y + \varepsilon,
\]
where $\Lambda=(\lambda_{vh}) \in \mathbb{R}^{|V| \times |\mathcal{H}|}$ is an unknown coefficient matrix, known as \emph{factor loading matrix}.  The noise $\varepsilon=(\varepsilon_v)_{v \in V}$ is comprised of independent random variables with mean zero and positive variance; so $\mathbb{E}[\varepsilon_v]=0$ and $\text{Var}[\varepsilon_v]=:\omega_{vv}\in(0,\infty)$.
The latent (unobserved) factors $(Y_h)_{h \in \mathcal{H}}$ are assumed to be mutually independent, and also independent of the noise $\varepsilon$. Without loss of generality, we fix the scale of the latent factors such that each $Y_h$ has mean zero and variance one. We emphasize that while we assume that the second-order moments (and thus covariances) exist, no further assumptions are made about the type of the noise random variables allowed in the model.
The main object of study is now the covariance matrix $\Sigma$ of the observed random vector $X$, which is given by
\begin{equation} \label{eq:covariance-matrix}
    \Sigma := \text{Cov}[X] = \Lambda \Lambda^{\top} + \Omega,
\end{equation}
where $\Omega$ is a diagonal matrix with entries $\omega_{vv} = \text{Cov}[\varepsilon_v]$. In traditional full factor analysis, all coefficients $\lambda_{vh}$ are nonzero \cite{anderson1956statistical}. Full factor analysis models were studied from a computational algebraic geometry point of view in \cite{AlgebraicFactorAnalysis}, where Gröbner bases were used to investigate the \emph{{ideal of invariants}} that vanish on the space of covariance matrices. The generators emerge from rank conditions on the symmetric covariance matrix under elimination of the diagonal entries.

The journey of this paper extends {\emph{beyond}} \cite{AlgebraicFactorAnalysis}, prompting a study of \emph{sparse} factor analysis models under an algebro-geometric perspective. Recently, there has been considerable interest in sparse factor analysis models, which posit that some (or often many) of the coefficients $\lambda_{vh}$ are equal to zero.  Examples of recent research on sparsity include work on correlation thresholding \cite{kim2022correlation}, $l_1$-penalization \cite{lan2014Sparse, trandafilov2017sparse}, and Bayesian approaches \cite{fruehwirthschnatter2018sparse, ohn2023bayesian}. Moreover, sparse factor analysis models are the building block for many directed graphical models with latent variables \cite{bollen1989structural, barber2022halftrek} that have applications in causality \cite{pearl2000causality, peters2017elements}. In this context, the coefficients (or factor loadings) $\lambda_{vh}$ can be interpreted as causal effects of the latent variables $Y_h$ on the observed variables $X_v$. To represent sparsity assumptions, it is useful to adopt a graphical perspective and encode a zero pattern in $\Lambda$ by a directed graph with nodes $V \cup \mathcal{H}$ \cite{maathuis2019handbook}. For an observed node $v \in V$ and a latent node $h \in \mathcal{H}$, the coefficient $\lambda_{vh}$ is allowed to be nonzero only if the edge $h \rightarrow v$ is in the graph. 

\begin{example}
\label{ex:LA}
    We revisit a study from \cite[p.\ 14]{harman1976modern} that pertains to five socio-economic variables that are observed in twelve districts in the greater Los Angeles area:  total population, median school years, total employment, miscellaneous professional services, and median house value. 
    Applying $l_1$-penalization techniques to the data, a model corresponding to the graph $G$ in Figure~\ref{fig:harman-example} is found by \cite[Table 1, Column 3]{trandafilov2017sparse}. The model imposes, for example, that total population is independent of total employment given only the first latent variable. In this model, the factor loading matrix has the zero pattern
        \[
        \Lambda = \scriptstyle\begin{pmatrix}
            \lambda_{11} & 0 & \lambda_{31} & \lambda_{41} & \lambda_{51} \\
            0 & \lambda_{22} & 0 & \lambda_{42} & \lambda_{52} \\
        \end{pmatrix}^{\top},
    \]
    and gives rise to the covariance matrix $\Sigma \in F(G)$ of the form
\[
\Sigma = (\sigma_{uv}) =\scriptstyle
\begin{pmatrix}    \omega_{11}+\lambda_{11}^{2}&0 &\lambda_{11}\lambda_{31}&\lambda_{11}\lambda_{41}&\lambda_{11}\lambda_{51}\\
0 &\omega_{22}+\lambda_{22}^{2}& 0 &\lambda_{22}\lambda_{42}&\lambda_{22}\lambda_{52}\\
\lambda_{11}\lambda_{31}&0&\omega_{33}+\lambda_{31}^{2}&\lambda_{31}\lambda_{41}&\lambda_{31}\lambda_{51}\\
\lambda_{11}\lambda_{41}&\lambda_{22}\lambda_{42}&\lambda_{31}\lambda_{41}&\omega_{44}+\lambda_{41}^{2}+\lambda_{52}^{2}&\lambda_{41}\lambda_{51}+\lambda_{42}\lambda_{52}\\
\lambda_{11}\lambda_{51}&\lambda_{22}\lambda_{52}&\lambda_{31}\lambda_{51}&\lambda_{41}\lambda_{51}+\lambda_{42}\lambda_{52}&\omega_{55}+\lambda_{51}^{2}+\lambda_{52}^{2}\\
\end{pmatrix}.
\]
The factor analysis model $F(G)$ is $12$-dimensional which is equal to the expected dimension obtained from counting parameters. However, we will show in this paper that the dimension of sparse factor models is not always equal to the number of parameters. The (toric) ideal of variants $I(G)$ is generated by two monomials and one binomial: $$\langle \sigma_{12}, \sigma_{23}, \sigma_{15} \sigma_{34} - \sigma_{14} \sigma_{35}  \rangle.$$    
\end{example}

\tikzset{
      every path/.style={thick}
}

\begin{figure}
\begin{tikzpicture}[align=center]
    \node[style={circle, inner sep=0.3mm, minimum size=0.7cm, draw, thick, black, fill=lightgray, text=black}] (h1) at (0,1.25) {$h_1$};
    \node[style={circle, inner sep=0.3mm, minimum size=0.7cm, draw, thick, black, fill=lightgray, text=black}] (h2) at (1,-1.25) {$h_2$};
    
    \node[style={rounded rectangle,draw}] (1) at (-4,0) {Pop};
    \node[style={rounded rectangle,draw}] (2) at (-2,0) {School};
    \node[style={rounded rectangle,draw}] (3) at (0,0) {Employ};
    \node[style={rounded rectangle,draw}] (4) at (2,0) {Service};
    \node[style={rounded rectangle,draw}] (5) at (4,0) {House};
    
    \draw[MyBlue] [-latex] (h1) edge (1);
    \draw[MyBlue] [-latex] (h1) edge (3);
    \draw[MyBlue] [-latex] (h1) edge (4);
    \draw[MyBlue] [-latex] (h1) edge (5);
    \draw[MyRed] [-latex] (h2) edge (2);
    \draw[MyRed] [-latex] (h2) edge (4);
    \draw[MyRed] [-latex] (h2) edge (5);
\end{tikzpicture}
\caption{Graph corresponding to a sparse factor analysis model for a study on metropolitan districts. Gray nodes correspond to latent nodes.}
\label{fig:harman-example}
\end{figure}
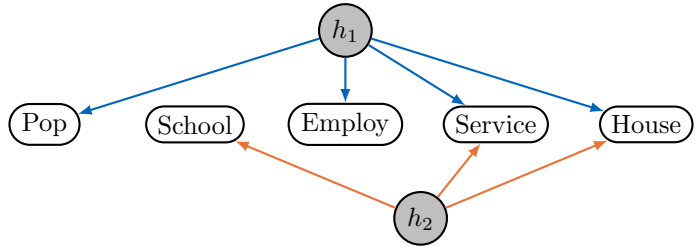

\tikzset{
      every node/.style={circle, inner sep=0.3mm, minimum size=0.5cm, draw, thick, black, fill=white, text=black},
      every path/.style={thick}
}

The binomials of this form are known as {\emph{tetrads}} in statistics which reflects the fact that the polynomial arises with four observed random variables. They also arise as generators of the ideal of invariants for one-factor analysis models or, equivalently, as the toric ideal of the edge subring of complete graphs \cite{SecondHypersimplex}; compare also \cite[Cor.~6.5]{sullivant2008}. Harman \cite[p.77]{harman1976modern} highlighted the absence of knowledge regarding the ideal of invariants for models involving two or more factors. This was subsequently addressed in the context of full factor analysis models in \cite{AlgebraicFactorAnalysis}. The ideal of invariants can enhance useful statistics for testing goodness-of-fit; see, e.g., \cite{bollen2000atetrad, silva2006learning, AlgebraicFactorAnalysis, drton2016wald, sturma2022testing}. In this paper, we address this gap of knowledge for sparse~factor analysis models. \\

The organization and the main results of the paper are as follows: In Section \ref{sec:dimension}, we study the dimension of factor analysis models. First, we give a general upper bound on the dimension in Theorem~\ref{thm:dimension-general}, which reveals that sparse factor models can be defective, that is, they may not have expected dimension. This differs from full factor analysis models that are always of expected dimension. For models that exhibit a minimal level of sparsity, which we call the zero upper triangular assumption (ZUTA), we also provide a lower bound on the dimension in Theorem~\ref{thm:dim-lower-bound}. The assumption ensures that the rows and columns of the matrix $\Lambda$ can be permuted such that the upper triangle of the matrix is zero. In many cases, the upper and lower bounds coincide, and we obtain a combinatorial formula for the dimension.

In Section \ref{sec:invariants}, we study the ideal of invariants of sparse one- and two-factor analysis models. We identify the ideal of sparse one-factor models as the toric ideal of the edge subring of complete graphs with isolated vertices and, consequently, provide a reduced Gröbner basis (Proposition~\ref{prop: Gröbner basis for sparse one-factor}). In Theorem~\ref{thm:variety-of-model}, we characterize the Zariski closure of two-factor analysis models. Moreover, we give an explicit description for the generators of Gr\"obner bases with respect to any circular term order for a subclass of sparse two-factor analysis models in Theorem~\ref{thm: a Gröbby}. This result uses the 2-delightful strategy that was introduced in \cite{combinatorialsecant} for secant varieties. We generalize this to joins of sparse one-factor analysis models, i.e., we study joins of toric ideals of the edge subring of complete graphs with isolated vertices. These ideals exhibit interesting combinatorial aspects, such as the initial ideal, which can be realized as the monomial edge ideal of a hypergraph (Lemma~\ref{lem: initial ideal of a hypergraph}).
Finally, our work opens up some natural conjectures regarding Gröbner bases of sparse-factor models that we outline in Section~\ref{subsec:computational-results}.
 
 The supplementary code for our results can be found on the MathRepo page: 
\begin{center}
    \url{https://mathrepo.mis.mpg.de/sparse-factor-analysis}
\end{center}

\section{Dimension} \label{sec:dimension}

Let $G=(V \cup \mathcal{H}, D)$ be a directed graph, where $V$ and $\mathcal{H}$ are finite disjoint sets of observed and latent nodes. We assume that the graph $G$ only contains edges that point from latent to observed nodes, that is, $D \subseteq \mathcal{H} \times V$; see Figure~\ref{fig:graph_5_2_4} for an example with $H = \{h_1, h_2\}$ and $V = \{v_1, \ldots, v_7\}$.  We refer to such bipartite graphs as \emph{factor analysis graphs}.  If $(h,v) \in D$, which we also denote by $h \rightarrow v \in D$, then $h\in \mathcal{H}$ is a parent of its child $v\in V$.  The respective sets of all parents and children are denoted by $\pa(v) = \{h \in \mathcal{H}: h \rightarrow v \in D\}$ and  $\ch(h)=\{v \in V: h \rightarrow v \in D\}$.

Every factor analysis graph determines a factor analysis model that for our purposes may conveniently be identified with the set of its covariance matrices.  For a definition, we let $\mathbb{R}^D$ denote the set of real $|V| \times |\mathcal{H}|$ matrices $\Lambda = (\lambda_{vh})$ with support $D$, that is, $\lambda_{vh} = 0$ if $h \rightarrow v \not\in D$.  Furthermore, we write $\PD(p)$ for the cone of positive definite $p \times p$ matrices, and $\mathbb{R}^p_{>0}\subset \PD(p)$ for the subset of diagonal positive definite matrices.

\begin{definition} \label{def:model}
Let $G=(V \cup \mathcal{H}, D)$ be a factor analysis graph with $|V|=p$ and $|\mathcal{H}|=m$.  As a model of the covariance matrix, the factor analysis model determined by $G$ is the image $F(G)=\im(\tau_G)$ of the parametrization map
\begin{align}\label{eq: parametrization}
\begin{split}
    \tau_G : \mathbb{R}^p_{>0} \times \mathbb{R}^D &\longrightarrow \PD(p)  \\
    (\Omega, \Lambda) &\longmapsto \Omega  + \Lambda \Lambda^{\top}.
\end{split}
\end{align}
\end{definition}

The covariance model $F(G)$ is a parameterized subset of the $\binom{p+1}{2}$-dimensional space of symmetric $p \times p$ matrices, and its dimension is the maximal rank of the Jacobian matrix of the map $\tau_G$ in Definition~\ref{def:model}.  Naturally, the \emph{expected dimension} of  $F(G)$ is equal to $\min\{|V|+|D|, \binom{|V|+1}{2}\}$, the minimum of the number of parameters in $(\Omega,\Lambda)$ and the dimension of the ambient space. 

\begin{example}\label{ex: parametrizedsigma}
The graph in Figure~\ref{fig:graph_5_2_4} corresponds to a sparse model with $|V|=7$ nodes and $|D|=9$ edges. 
To simplify notation, we identify $v_1, \ldots, v_7$ with the integers $1, \ldots, 7$, and $h_1,h_2$ with the integers $1,2$. Then, the sparse factor loading matrix is 
\begin{equation} \label{eq:lambda-example}
\Lambda = \scriptstyle
\begin{pmatrix}
    \lambda_{11} & \lambda_{21} & \lambda_{31} & \lambda_{41} & \lambda_{51} & 0 & 0 \\
    0 & 0 & 0 & \lambda_{42} & \lambda_{52} & \lambda_{62} & \lambda_{72} \\
\end{pmatrix}^{\top},
\end{equation}
which gives rise to the covariance matrix $\Sigma \in F(G)$ of the form
\[
\Sigma = \scriptstyle
\begin{pmatrix}    \omega_{11}+\lambda_{11}^{2}&\lambda_{11}\lambda_{21}&\lambda_{11}\lambda_{31}&\lambda_{11}\lambda_{41}&\lambda_{11}\lambda_{51}&0&0\\
\lambda_{11}\lambda_{21}&\omega_{22}+\lambda_{21}^{2}&\lambda_{21}\lambda_{31}&\lambda_{21}\lambda_{41}&\lambda_{21}\lambda_{51}&0&0\\
\lambda_{11}\lambda_{31}&\lambda_{21}\lambda_{31}&\omega_{33}+\lambda_{31}^{2}&\lambda_{31}\lambda_{41}&\lambda_{31}\lambda_{51}&0&0\\
\lambda_{11}\lambda_{41}&\lambda_{21}\lambda_{41}&\lambda_{31}\lambda_{41}&\omega_{44}+\lambda_{41}^{2}+\lambda_{42}^{2}&\lambda_{41}\lambda_{51}+\lambda_{42}\lambda_{52}&\lambda_{42}\lambda_{62}&\lambda_{42}\lambda_{72}\\
\lambda_{11}\lambda_{51}&\lambda_{21}\lambda_{51}&\lambda_{31}\lambda_{51}&\lambda_{41}\lambda_{51}+\lambda_{42}\lambda_{52}&\omega_{55}+\lambda_{51}^{2}+\lambda_{52}^{2}&\lambda_{52}\lambda_{62}&\lambda_{52}\lambda_{72}\\
0&0&0&\lambda_{42}\lambda_{62}&\lambda_{52}\lambda_{62}&\omega_{66}+\lambda_{62}^{2}&\lambda_{62}\lambda_{72}\\
0&0&0&\lambda_{42}\lambda_{72}&\lambda_{52}\lambda_{72}&\lambda_{62}\lambda_{72}&\omega_{77}+\lambda_{72}^{2}\\
\end{pmatrix}.
\]
The expected dimension of the corresponding model is equal to $|V|+|D|=16$, and as we verify in Corollary~\ref{cor:dimension-formula}, this is indeed the dimension of the model.
\end{example}

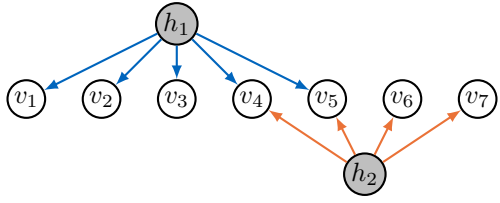
\begin{figure}[t]
\centering
    \begin{tikzpicture}
        \node[fill=lightgray] (h1) at (0,1) {$h_1$};
        \node[fill=lightgray] (h2) at (2.5,-1) {$h_2$};
        
        \node[] (1) at (-2,0) {$v_1$};
        \node[] (2) at (-1,0) {$v_2$};
        \node[] (3) at (0,0) {$v_3$};
        \node[] (4) at (1,0) {$v_4$};
        \node[] (5) at (2,0) {$v_5$};
        \node[] (6) at (3,0) {$v_6$};
        \node[] (7) at (4,0) {$v_7$};
        
        \draw[MyBlue] [-latex] (h1) edge (1);
        \draw[MyBlue] [-latex] (h1) edge (2);
        \draw[MyBlue] [-latex] (h1) edge (3);
        \draw[MyBlue] [-latex] (h1) edge (4);
        \draw[MyBlue] [-latex] (h1) edge (5);
        \draw[MyRed] [-latex] (h2) edge (4);
        \draw[MyRed] [-latex] (h2) edge (5);
        \draw[MyRed] [-latex] (h2) edge (6);
        \draw[MyRed] [-latex] (h2) edge (7);
    \end{tikzpicture}
    \caption{Factor analysis graph with $2$ latent factors and $7$ observed variables.}
    \label{fig:graph_5_2_4}
\end{figure}

If $G=(V \cup \mathcal{H}, D)$ is a factor analysis graph with all possible edges, so $D=\mathcal{H} \times V$, then the corresponding covariance model recovers the \emph{full} factor analysis model \cite{AlgebraicFactorAnalysis, anderson1956statistical}. However, using orthogonal transformations as in the QR decomposition, any covariance matrix $\Sigma$ in a full factor analysis model can be written as $\Sigma = \Omega + \Lambda \Lambda^{\top}$ such that the upper triangle of $\Lambda=(\lambda_{vh})$ is zero, i.e., $(\lambda_{vh})_{v < h} = 0$  if the nodes are given by $V=\{1, \ldots, p\}$ and $\mathcal{H} =\{1, \ldots, m\}$. Hence, any full factor model is equivalent to a sparse factor analysis model where only the edges corresponding to the upper triangle in $\Lambda$ are removed from the complete bipartite graph. Said differently, we obtain a graph that belongs to the set of factor analysis graphs satisfying the following assumption.

\begin{assumption*}[ZUTA] \label{ass:1-child}
A factor analysis graph and its associated model satisfy the \emph{Zero Upper Triangular Assumption (ZUTA)} if there exists a relabeling of the latent nodes $\mathcal{H}=\{h_1, \ldots, h_m\}$ such that $\ch(h_i)$ is not contained in $\bigcup_{j > i} \ch(h_j)$ for all $i=1,\ldots,m$.  In this case, there is then a relabeling of the observed nodes $V=\{v_1, \ldots, v_p\}$ such that $v_i \in \ch(h_i)$ and $v_i \not\in \bigcup_{j > i} \ch(h_j)$ for all $i=1,\ldots,m$.  
\end{assumption*}

ZUTA ensures that the rows and columns of the factor loading matrix $\Lambda$ can be permuted such that the upper triangle of the matrix is zero. 

\begin{example}
    The graph in Figure~\ref{fig:graph_5_2_4} satisfies ZUTA.  The latent nodes $h_1$ and $h_2$ are already ordered as desired.  A ZUTA labeling of $V$ is obtained if we permute, for example, the labelings of $v_2$ and $v_4$. This corresponds to permuting rows $2$ and $4$ of the parameter matrix $\Lambda$ in Equation~\eqref{eq:lambda-example}.
\end{example}

Note that ZUTA requires that $p \geq m$ and that each latent node has at least one observed child.  However, isolated latent nodes need not be considered as they only add a zero column in $\Lambda$.

\begin{remark}
In the special case where a factor analysis graph contains an observed node $v \in V$ such that $\pa(v) = \emptyset$, the dimension of the model is by one larger than the dimension of the model corresponding to the smaller graph where this node is removed.
\end{remark}

\begin{remark} \label{rem:pure-children}
ZUTA is more general than the ``$k$-pure-children'' condition that is often employed in previous work on structure identifiability of sparse factor analysis models \cite{arora2012learning, bing2020adaptive, moran2022identifiable, markham2023neurocausal}.
The $k$-pure-children condition requires that each latent node $h \in \mathcal{H}$ has at least $k$ (pure) children that have no other parents than $h$. In particular, the $1$-pure child condition implies that there is an upper $m \times m$ matrix inside $\Lambda$ which is diagonal. Hence, any $k$-pure children condition with $k \geq 1$ implies ZUTA.
Note that ZUTA also requires that each latent node $h \in \mathcal{H}$ has at least $1$ child, but the children are allowed to have more parents. For example, after relabeling such that ZUTA is satisfied, node $v_3$ needs to be a child of $h_3$, but it could also have $h_1$ and $h_2$ as parents. Only the first node $v_1$ has to be a \emph{pure} child of $h_1$. 
Conversely, a factor analysis graph in which there is no latent node that has a pure child does not satisfy ZUTA.
\end{remark}

It is proved in \cite{AlgebraicFactorAnalysis} that the dimension of full factor analysis models is always equal to the expected dimension obtained by counting parameters. Since full factor models are equivalent to models satisfying ZUTA, the number of edges is equal to $|D|=pm-\binom{m}{2}$, which implies that the expected dimension is given by $\min\{p(m+1)-\binom{m}{2}, \binom{p+1}{2}\}$. 

\begin{example}
    Consider the full factor analysis model with $m=2$ latent nodes and $p=5$ observed nodes. The model is equivalent to the model $F(G)$ corresponding to the graph in Figure \ref{fig:dense-graph}, and the dimension is equal to the number of parameters, that is, $\dim(F(G)) = 14$.
\end{example}

\begin{figure}[t]
\begin{center}
\begin{tikzpicture}[align=center]
    \node[fill=lightgray] (h1) at (-0.5,1) {$h_1$};
    \node[fill=lightgray] (h2) at (0,-1) {$h_2$};
    
    \node[] (1) at (-2.5,0) {$v_1$};
    \node[] (2) at (-1.5,0) {$v_2$};
    \node[] (3) at (-0.5,0) {$v_3$};
    \node[] (4) at (0.5,0) {$v_4$};
    \node[] (5) at (1.5,0) {$v_5$};
    
    \draw[MyBlue] [-latex] (h1) edge (1);
    \draw[MyBlue] [-latex] (h1) edge (2);
    \draw[MyBlue] [-latex] (h1) edge (3);
    \draw[MyBlue] [-latex] (h1) edge (4);
    \draw[MyBlue] [-latex] (h1) edge (5);
    \draw[MyRed] [-latex] (h2) edge (2);
    \draw[MyRed] [-latex] (h2) edge (3);
    \draw[MyRed] [-latex] (h2) edge (4);
    \draw[MyRed] [-latex] (h2) edge (5);
\end{tikzpicture}
\caption{A graph whose associated model is equal to the full two-factor analysis model with $5$ observed nodes.}
\label{fig:dense-graph}
\end{center}
\end{figure}
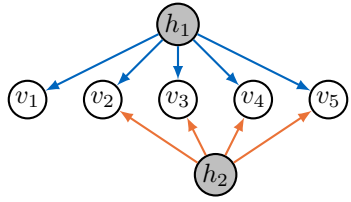

In Example \ref{ex: parametrizedsigma}, we saw a \emph{sparse} model that is also of expected dimensions. However, sparse factor analysis models differ fundamentally from full factor analysis models in the sense that their dimension is not always equal to the expected dimension. The next example shows models where the dimension drops, that is, the dimension is strictly smaller than the expected dimension. In particular, it is not enough to look at the graph and count parameters or zeros in the covariance matrix to tell the dimension.

\begin{example}
    Consider the three graphs in Figure~\ref{fig:dimension}. The expected dimension of the model $F(G)$ corresponding to graph (a) is $|V|+|D|=14$. On the other hand, the model is a subset of the space of symmetric matrices that has dimension $\binom{p+1}{2}=15$, and every covariance matrix $\Sigma=(\sigma_{vw}) \in F(G)$ has three zeros, $\sigma_{v_1 v_4}=\sigma_{v_1 v_5} = \sigma_{v_2 v_5} = 0$. Thus, we obtain $15-3=12$ as a trivial upper bound for the dimension. It turns out that we have indeed $\dim(F(G)) = 12$. However, the model corresponding to graph (b), obtained by adding one more node, shows that counting zeros in the covariance matrix is not enough. In this case, we have $\binom{p+1}{2}=21$ and there are five zeros in every covariance matrix in the model, namely $\sigma_{v_1 v_4}=\sigma_{v_1 v_5}=\sigma_{v_1 v_6}=\sigma_{v_2 v_5}=\sigma_{v_2 v_6}=0$. Thus we obtain an upper bound of $16$ for the dimension that is also equal to the expected dimension $|V|+|D|$. Nevertheless, the true dimension is given by $\dim(F(G))=15$. The model corresponding to graph (c) has a similar drop of dimension. In this case there are no zeros in the covariance matrix and the expected dimension is $|V|+|D|=18$, but the true dimension is $\dim(F(G))=17$.
\end{example}

\begin{center}
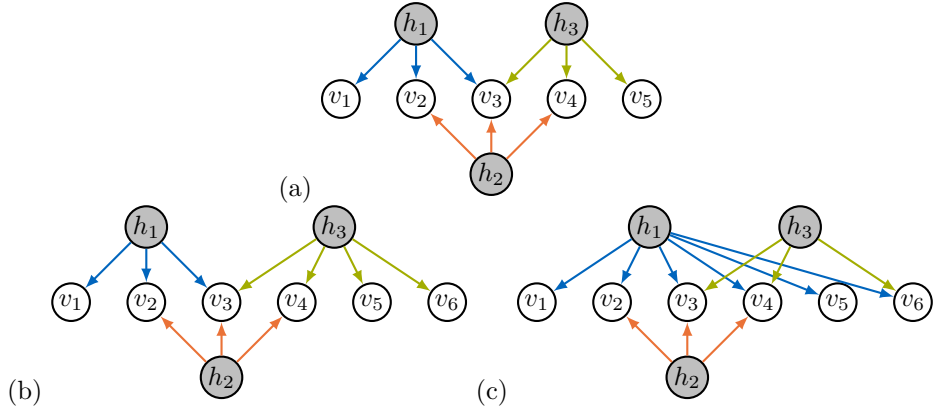
\begin{figure}[t]
(a)
\begin{tikzpicture}[align=center]
    \node[fill=lightgray] (h1) at (-1,1) {$h_1$};
    \node[fill=lightgray] (h2) at (0,-1) {$h_2$};
    \node[fill=lightgray] (h3) at (1,1) {$h_3$};
    
    \node[] (1) at (-2,0) {$v_1$};
    \node[] (2) at (-1,0) {$v_2$};
    \node[] (3) at (0,0) {$v_3$};
    \node[] (4) at (1,0) {$v_4$};
    \node[] (5) at (2,0) {$v_5$};
    
    \draw[MyBlue] [-latex] (h1) edge (1);
    \draw[MyBlue] [-latex] (h1) edge (2);
    \draw[MyBlue] [-latex] (h1) edge (3);
    \draw[MyRed] [-latex] (h2) edge (2);
    \draw[MyRed] [-latex] (h2) edge (3);
    \draw[MyRed] [-latex] (h2) edge (4);
    \draw[MyGreen] [-latex] (h3) edge (3);
    \draw[MyGreen] [-latex] (h3) edge (4);
    \draw[MyGreen] [-latex] (h3) edge (5);
\end{tikzpicture}\\
(b)
\begin{tikzpicture}[align=center]
    \node[fill=lightgray] (h1) at (-1.5,1) {$h_1$};
    \node[fill=lightgray] (h2) at (-0.5,-1) {$h_2$};
    \node[fill=lightgray] (h3) at (1,1) {$h_3$};
    
    \node[] (1) at (-2.5,0) {$v_1$};
    \node[] (2) at (-1.5,0) {$v_2$};
    \node[] (3) at (-0.5,0) {$v_3$};
    \node[] (4) at (0.5,0) {$v_4$};
    \node[] (5) at (1.5,0) {$v_5$};
    \node[] (6) at (2.5,0) {$v_6$};
    
    \draw[MyBlue] [-latex] (h1) edge (1);
    \draw[MyBlue] [-latex] (h1) edge (2);
    \draw[MyBlue] [-latex] (h1) edge (3);
    \draw[MyRed] [-latex] (h2) edge (2);
    \draw[MyRed] [-latex] (h2) edge (3);
    \draw[MyRed] [-latex] (h2) edge (4);
    \draw[MyGreen] [-latex] (h3) edge (3);
    \draw[MyGreen] [-latex] (h3) edge (4);
    \draw[MyGreen] [-latex] (h3) edge (5);
    \draw[MyGreen] [-latex] (h3) edge (6);
\end{tikzpicture}
(c)
\begin{tikzpicture}[align=center]
    \node[fill=lightgray] (h1) at (-1,1) {$h_1$};
    \node[fill=lightgray] (h2) at (-0.5,-1) {$h_2$};
    \node[fill=lightgray] (h3) at (1,1) {$h_3$};
    
    \node[] (1) at (-2.5,0) {$v_1$};
    \node[] (2) at (-1.5,0) {$v_2$};
    \node[] (3) at (-0.5,0) {$v_3$};
    \node[] (4) at (0.5,0) {$v_4$};
    \node[] (5) at (1.5,0) {$v_5$};
    \node[] (6) at (2.5,0) {$v_6$};
    
    \draw[MyBlue] [-latex] (h1) edge (1);
    \draw[MyBlue] [-latex] (h1) edge (2);
    \draw[MyBlue] [-latex] (h1) edge (3);
    \draw[MyBlue] [-latex] (h1) edge (4);
    \draw[MyBlue] [-latex] (h1) edge (5);
    \draw[MyBlue] [-latex] (h1) edge (6);
    \draw[MyRed] [-latex] (h2) edge (2);
    \draw[MyRed] [-latex] (h2) edge (3);
    \draw[MyRed] [-latex] (h2) edge (4);
    \draw[MyGreen] [-latex] (h3) edge (3);
    \draw[MyGreen] [-latex] (h3) edge (4);
    \draw[MyGreen] [-latex] (h3) edge (6);
\end{tikzpicture}
\caption{Graphs with lower model dimension than number of parameters.}
\label{fig:dimension}
\end{figure}
\end{center}

To study the dimension of sparse factor analysis models, we first introduce necessary terminology. Let $C(V,2) := \left\{ \{v,w\}: v,w \in V, v \neq w \right\}$ be the set of $2$-pairs of $V$, i.e., the set of all subsets consisting of $2$ distinct nodes of $V$. We write $\jpa(\{u,v\}) = \{h \in \mathcal{H}: h \in \pa(u) \cap \pa(v)\}$ for the set of \emph{joint parents} of a pair $\{u,v\} \in C(V,2)$. For any latent node $h \in \mathcal{H}$, we let $C(V,2)_h = \left\{ \{v,w\} \in C(V,2): h \in \jpa(\{v,w\})\right\}$ be the collection of pairs of nodes that have $h$ as a joint parent.

For a matrix $\Sigma \in F(G)$, the parametrization of the entries $\sigma_{uv}$ depends on the joint parents of the pair $\{u,v\}$. In particular, for $\Omega = \text{diag}(\omega_{vv}) \in \mathbb{R}^p_{>0}$ and $\Lambda = (\lambda_{vh}) \in \mathbb{R}^D$, we have
\[
\sigma_{uv} = \begin{cases} 
\sum_{h \in \jpa(\{u,v\})} \lambda_{uh} \lambda_{vh} & \text{if } u \neq v ,\\
\omega_{uu} + \sum_{h \in \pa(u)} \lambda_{uh}^2 & \text{if } u = v,
\end{cases}
\]
where we use the convention that the empty sum is zero. Recall that the dimension of $F(G)=\im(\tau_G)$ is equal to the maximal rank of the Jacobian of $\tau_G$. Hence, we need to study the Jacobian matrix that has the form
\[
J = \begin{pNiceMatrix}[first-row,first-col]
    & \omega & \lambda  \\
u & I_p   & C  \\
\{u,v\} & 0   & B 
\end{pNiceMatrix},
\]
where the rows in the upper part correspond to the derivatives of $\sigma_{uu}$ and the rows in the lower part correspond to the derivatives of $\sigma_{uv}$ for $u \neq v$. In particular, the entries in the unit matrix $I_p$ on the upper left are given by
\[
\frac{\partial \sigma_{uu}}{\partial \omega_{vv}} = \begin{cases} 
1 & \textrm{if } u=v, \\
0 & \textrm{else}.
\end{cases}
\]
Thus, the rank of the Jacobian is equal to $p+\rk(B)$; recall that $p=|V|$. The entries of the matrix $B$ are given by
\begin{equation} \label{eq:entries-of-jacobian}
\frac{\partial \sigma_{uv}}{\partial \lambda_{zh}} = \begin{cases} 
\lambda_{vh} & \textrm{if } z=u \textrm{ and } h \in \jpa(\{u,v\}), \\
\lambda_{uh} & \textrm{if } z=v \textrm{ and } h \in \jpa(\{u,v\}), \\
0 & \textrm{else}.
\end{cases}
\end{equation}
Note that the rows of $B$ are indexed by 2-pairs $\{u,v\} \in C(V,2)$. A necessary condition for a model to have expected dimension is the crucial observation that, for each latent node $h$, there has to be a \emph{different} set of 2-pairs of children of $h$ that has same cardinality as the number of children of $h$. Otherwise, the dimension drops accordingly.   We formalize the concept of different 2-pairs, a.k.a rows of $B$, by considering  pairwise disjoint collections.

\begin{definition}
    Let $G=(V \cup \mathcal{H})$ be a factor analysis graph and let $\mathcal{A}=(A_h)_{h \in \mathcal{H}}$ be a collection of observed 2-pairs, that is, $A_h \subseteq C(V,2)$. We say that the collection $\mathcal{A}$ is \emph{valid} if 
    \begin{itemize}
        \item[(i)] $A_h \subseteq C(V,2)_h$ with cardinality $|A_h| \leq  |\ch(h)|$ for all $h \in \mathcal{H}$, and
        \item[(ii)] the collection is pairwise disjoint, i.e., $A_h \cap A_{\ell} = \emptyset$ for $h \neq \ell$.
    \end{itemize}
    Moreover, we say that $\sum_{h \in \mathcal{H}} |A_h|$ is the \emph{sum of cardinalities} of a valid collection.
\end{definition}

The next theorem gives an upper bound on the dimension. It is obtained by choosing a valid collection $\mathcal{A}=(A_h)_{h \in \mathcal{H}}$ such that the sum of cardinalities $\sum_{h \in \mathcal{H}} |A_h|$ is maximal. The upper bound holds for all sparse factor models, even if ZUTA is not satisfied.

\begin{theorem} \label{thm:dimension-general}
Let $G=(V \cup \mathcal{H}, D)$ be a factor analysis graph. Let $\mathcal{A}=(A_h)_{h \in \mathcal{H}}$ be a valid collection of $2$-pairs such that the sum of cardinalities $\sum_{h \in \mathcal{H}} |A_h|$ is maximal among all valid collections.   Then
\[
     \dim(F(G)) \leq |V| + \sum_{h \in \mathcal{H}} |A_h|.
\]
\end{theorem}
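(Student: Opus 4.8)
The plan is to bound the generic rank of the matrix $B$, since the discussion preceding the statement already reduces the problem to this: $\dim(F(G))$ equals the maximal rank of the Jacobian $J$ over all parameter choices, and this rank is $|V| + \rk(B)$. Thus it suffices to prove that $\rk(B) \le \sum_{h \in \mathcal{H}} |A_h|$ for a maximal valid collection $\mathcal{A}$, and I will in fact establish this bound for the matrix $B$ evaluated at an \emph{arbitrary} choice of parameters $\Lambda \in \mathbb{R}^D$, which in particular bounds the maximal rank. The starting point is the column structure of $B$ visible in \eqref{eq:entries-of-jacobian}: the columns group into blocks indexed by the latent nodes $h \in \mathcal{H}$, the block of $h$ consisting of the $|\ch(h)|$ columns $\partial/\partial \lambda_{zh}$ with $z \in \ch(h)$; and the row indexed by a $2$-pair $\{u,v\}$ has nonzero entries only in the blocks of those $h$ with $h \in \jpa(\{u,v\})$.

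The key observation is a support (coordinate-subspace) estimate. For a set $U \subseteq C(V,2)$ of $2$-pairs, write $N(U) = \bigcup_{\{u,v\} \in U} \jpa(\{u,v\})$ for the latent nodes that are a joint parent of some pair in $U$. By the block description above, every row of $B$ indexed by a pair in $U$ is supported on the columns lying in the blocks of nodes $h \in N(U)$, and there are exactly $\sum_{h \in N(U)} |\ch(h)|$ such columns. Hence the rows of $U$ all lie in a coordinate subspace of dimension $\sum_{h \in N(U)} |\ch(h)|$, which forces
\[
  |U| \;\le\; \sum_{h \in N(U)} |\ch(h)| \qquad \text{whenever the rows of } U \text{ are linearly independent.}
\]
This holds at every parameter value, since it only uses the zero pattern of $B$.

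To finish, I would choose a maximal linearly independent set $T$ of rows of $B$, so that $|T| = \rk(B)$, and set up a capacitated assignment problem: match each $2$-pair $p \in T$ to a latent node in $\jpa(p)$, subject to the constraint that node $h$ receives at most $|\ch(h)|$ pairs. Replacing each $h$ by $|\ch(h)|$ copies turns this into an ordinary bipartite matching, for which Hall's marriage theorem guarantees a matching saturating $T$ precisely when $|U| \le \sum_{h \in N(U)} |\ch(h)|$ for every $U \subseteq T$. Since every such $U$ consists of linearly independent rows, the displayed support estimate supplies exactly this condition. The resulting assignment $f\colon T \to \mathcal{H}$ partitions $T$ into sets $A_h = f^{-1}(h) \subseteq C(V,2)_h$ with $|A_h| \le |\ch(h)|$, i.e.\ a valid collection whose sum of cardinalities equals $|T| = \rk(B)$. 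Consequently $\rk(B)$ is at most the maximal sum of cardinalities over all valid collections, and adding $|V|$ gives the claimed bound.

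The main obstacle I anticipate is the bookkeeping around Hall's theorem in its capacitated form: one must verify that the neighborhood $N(U)$ appearing in the Hall condition is precisely the set of joint parents that governs the support estimate, and that the assignment yields \emph{disjoint} sets $A_h$ (automatic, since each pair is sent to a single node) contained in $C(V,2)_h$ with the correct cardinality bound. The only genuine analytic input beyond this combinatorics is the elementary support estimate, which is exactly where linear independence enters; everything else is the reduction to standard bipartite matching.
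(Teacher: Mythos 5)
Your proposal is correct, but it takes a genuinely different route from the paper. The paper also reduces to bounding $\rk(B)$, but then runs a block-decomposition argument: it partitions the rows of $B$ according to the maximal collection $\mathcal{A}$ and its complement, and uses a sequence of exchange arguments (swapping pairs between the $A_h$'s, which would increase the sum of cardinalities and contradict maximality) to prove that certain blocks of $B$ vanish identically; the rank bound then falls out of the resulting block-triangular shape. You instead prove the sharper and cleaner statement that $\rk(B)$ is \emph{itself} the sum of cardinalities of some valid collection: the coordinate-subspace estimate $|U| \le \sum_{h \in N(U)} |\ch(h)|$ for linearly independent row sets $U$ (with $N(U) = \bigcup_{\{u,v\}\in U}\jpa(\{u,v\})$) is exactly the deficiency condition in the capacitated form of Hall's theorem, and the resulting assignment of a maximal independent row set $T$ to latent nodes produces a valid collection with $\sum_h |A_h| = |T| = \rk(B)$. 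This makes the maximality hypothesis on $\mathcal{A}$ do no work inside the proof — it is only invoked at the end to dominate the collection you construct — whereas the paper's exchange arguments lean on maximality repeatedly. Your route buys a shorter, more conceptual argument (and, as a by-product, the statement that the upper bound $|V| + \max_{\mathcal{A}\text{ valid}} \sum_h|A_h|$ is the natural combinatorial quantity, since $\rk(B)$ always equals some valid collection's total); the paper's route makes the zero structure of $B$ explicit, which it then reuses notationally in the proof of the lower bound. The only points to be careful about — that every subset of $T$ inherits linear independence and hence the Hall condition, that the assignment automatically yields disjoint sets $A_h \subseteq C(V,2)_h$ with $|A_h| \le |\ch(h)|$, and that the support estimate uses only the zero pattern of $B$ and so holds at every parameter value — are all addressed in your write-up.
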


\begin{proof}
It is enough to show that $\rk(B) \leq \sum_{h \in \mathcal{H}} |A_h|$. Define $\lambda_{i} := (\lambda_{\ch(h_i),h_i}) \in \mathbb{R}^{|\ch(h_i)|}$ and $\mathcal{A}^{\complement} := C(V,2) \setminus \left(\bigcup_{h \in \mathcal{H}} A_{h}\right)$. 
Then the matrix $B$ can be written as
\begin{equation*}
B \; = \;\; \begin{pNiceArray}{ccc}[margin,parallelize-diags=false,first-row,first-col]
    & \lambda_{1} & \cdots & \lambda_{m} \\
A_{h_1} & B_{1,1} & \cdots & B_{1,m}\\
\Vdots & \vdots &  &  \vdots \\
A_{h_m}  & B_{m,1} & \cdots & B_{m,m}  \\
\hline
\mathcal{A}^{\complement} & B_{\mathcal{A}^{\complement},1} & \cdots & B_{\mathcal{A}^{\complement},m} 
\end{pNiceArray}.
\end{equation*}
The proof is structured as follows. We first show in Claim 1 and Claim 3 that some submatrices of $B$ are equal to zero. Claim 2 is an intermediate result we need to prove for Claim 3. Then, we restructure the matrix $B$ and show in Claim 4 that the rank of the matrix $B$ can not be larger than  $\sum_{h \in \mathcal{H}} |A_h|$.
Let $[m]:=\{1,\ldots,m\}$ and define the index sets $I^{(=)} = \{i \in [m]: |A_{h_i}|=|\ch(h_i)|\}$ and $I^{(<)} = [m] \setminus I^{(=)}$. \looseness=-1

\mbox{ } \\
\noindent
\textbf{Claim 1:} If $i \in I^{(<)}$, then $B_{\mathcal{A}^{\complement},i} = 0$. \\

Consider an index $i \in I^{(<)}$ and a row indexed by $\{u,z\} \in \mathcal{A}^{\complement}$. Observe that we must have $h_i \not\in \pa(u) \cap \pa(z)$. Otherwise we could have chosen $\widetilde{A}_{h_i} = A_{h_i} \cup \{\{u,z\}\}$ that has empty intersection with any $A_{h_j}$ for $j \neq i$. But this defines another valid collection $\widetilde{\mathcal{A}}=(A_{h_1}, \ldots, A_{h_{i-1}}, \widetilde{A}_{h_i}, A_{h_{i+1}}, \ldots, A_{h_m})$ such that the sum of cardinalities is greater by one, which contradicts the assumption on the maximality of $\mathcal{A}$. We conclude that the row in $B_{\mathcal{A}^{\complement},i}$ that is indexed by $\{u,z\}$ is equal to zero; recall \eqref{eq:entries-of-jacobian}. Since this holds for all rows $\{u,z\} \in \mathcal{A}^{\complement}$, we have that $B_{\mathcal{A}^{\complement},i}=0$, which proves the claim. \\

\noindent To state Claim 2 we define 
\[
    J^0 = \{i \in I^{(=)}: B_{\mathcal{A}^{\complement}, i} \neq 0\} = \{i \in I^{(=)}: h_i \in \jpa(R) \text{ for some } R \in \mathcal{A}^{\complement}\},
\]
and, for all $k \geq 1$, we define 
\[
    J^k = \{j \in I^{(=)}: \text{there is } i \in J^{k-1} \text{ such that } 
    h_j \in \text{jpa}(R) \text{ for some } R \in A_{h_i}\}. 
\]
Since $h_j \in \jpa(R)$ for all $R \in A_{h_j}$, we clearly have that $J^{k} \subseteq J^{k+1}$ for all $k \geq 0$. 

\mbox{ } \\
\noindent
\textbf{Claim 2:} Let  $R \in A_{h_j}$ for some $j \in J^k$, $k \geq 0$. Then $\{i \in [m]: h_i \in \text{jpa}(R)\} \subseteq I^{(=)}$. \\

We first assume $k=0$. Let $j \in J^0$ and $R \in A_{h_j}$, and suppose there is $h_l \in \jpa(R)$ such that $l \in I^{(<)}$. On the one hand, this means that $|A_{h_l}|<|\ch(h_l)|$. On the other hand, since $j \in J^0$, there has to be a pair $S \in \mathcal{A}^{\complement}$ such that $h_j \in \jpa(S)$. Therefore, we can define a collection $\widetilde{\mathcal{A}}=(\widetilde{A}_{h})_{h \in \mathcal{H}}$ such that $\widetilde{A}_{h_j} = (A_{h_j} \setminus \{R\}) \cup \{S\}$, $\widetilde{A}_{h_l} = A_{h_l} \cup \{R\}$ and $\widetilde{A}_{h}=A_{h}$ for all $h \not\in \{h_j,h_l\}$. Note that the collection $\widetilde{\mathcal{A}}$ is valid but $\sum_{h \in \mathcal{H}}|\widetilde{A}_h|=1+\sum_{h \in \mathcal{H}}|A_h|$. This is a contradiction to the maximality assumption on the collection $\mathcal{A}$ and we conclude that we must have $l \in I^{(=)}$. 

Now, let $k \geq 1$ and assume that $j \in J^k$ and $R \in A_{h_j}$. If $j \in J^0$, we are done. If $j \in J^k \setminus J^0$, suppose there is $h_l \in \jpa(R)$ such that $l \in I^{(<)}$. Once again, this means that $|A_{h_l}| < |\ch(h_l)|$. Now, we recursively choose integers $j_1, \ldots, j_n$ and corresponding subsets $S_{j_i} \in A_{h_{j_i}}$ as follows. First, since $j \in J^k$, there has to be a minimal integer $k_1 < k$ such that there is $j_1 \in J^{k_1}$ and $h_j \in \text{jpa}(S_{j_1})$ for some $S_{j_1} \in A_{h_{j_1}}$. Note that $j \not\in J^{k_1}$, since otherwise there would exist $\tilde{j}_1 \in J^{k_1-1}$ such that $h_j \in \jpa(\widetilde{S})$ for some $\widetilde{S} \in A_{h_{\tilde{j}_1}}$, which is a contradiction on the minimality assumption on $k_1$. 
Further,  define $k_{i+1}$ as the minimal integer $k_{i+1} < k_i$ such that there is $j_{i+1} \in J^{k_{i+1}}$ and $h_{j_i} \in \text{jpa}(S_{j_{i+1}})$  for some $S_{j_{i+1}} \in A_{h_{j_{i+1}}}$. We stop this procedure as soon as we arrived at some $n \geq 1$ such that $k_{n}=0$. It can be seen as before that $j_i \not\in J^{k_{i+1}}$ for all $i=1, \ldots, n$. Hence, the integers $j, j_1, \ldots, j_{n}$ are pairwise different by construction, which also implies that the pairs $R, S_{j_1}, \ldots, S_{j_n}$ are pairwise different.
Moreover, since $j_{n} \in J^0$, there has to be a pair $S \in \mathcal{A}^{\complement}$ such that $h_{j_n} \in \jpa(S)$. Now, we define a collection $\widetilde{\mathcal{A}}=(\widetilde{A}_{h})_{h \in \mathcal{H}}$ as follows:
\begin{align*}
\widetilde{A}_{h_{j_{n}}} &= (A_{h_{j_{n}}} \setminus \{S_{j_n}\}) \cup \{S\}, &
\widetilde{A}_{h_{j_{i}}} &= (A_{h_{j_{i}}} \setminus \{S_{j_i}\}) \cup \{S_{j_{i+1}}\} \ \text{ for } i = 1, \ldots n-1, \\
\widetilde{A}_{h_j} &=(A_{h_{j}} \setminus \{R\}) \cup \{S_{j_1}\}, &
\widetilde{A}_{h_l} &= A_{h_l} \cup \{R\},
\end{align*}
and $\widetilde{A}_{h}=A_{h}$ for all $h \in \mathcal{H}$ that do not appear above. Since the pairs $R, S_{j_1}, \ldots, S_{j_n}$ and $S$ are pairwise different, the collection is valid. However, we have that $\sum_{h \in \mathcal{H}}|\widetilde{A}_h|=1+\sum_{h \in \mathcal{H}}|A_h|$, which is a contradiction to the maximality assumption on the collection $\mathcal{A}$. We conclude that we must have $l \in I^{(=)}$, which proves the claim. \\

Now, observe that there must exist a $k^{\ast} \geq 0$ such that the sequence $J^0 \subseteq J^1 \subseteq \ldots$ stabilizes, that is, $J^{k^{\ast}} = J^{k^{\ast}+1} = \ldots$. This is true since  $J^{k-1} \subseteq J^k$ and $J^k \subseteq I^{(=)}$ for all $k \geq 1$. Define $\overline{J} := J^{k^{\ast}}$. 

\mbox{ } \\
\noindent
\textbf{Claim 3:} $B_{j,i}=0$ for all $j \in \overline{J}, i \in [m] \setminus \overline{J}$. \\

Let $j \in \overline{J}$ and $i \in [m] \setminus \overline{J}$ be two indices and consider a row in $B_{i,j}$ indexed by a pair $R \in A_{h_j}$. To show that this row of $B_{j,i}$ is zero, it is enough to show that $\{i \in [m]: h_i \in \text{jpa}(R)\} \subseteq \overline{J}$; recall Equation~\eqref{eq:entries-of-jacobian}. By Claim 2, we have that $\{i \in [m]: h_i \in \text{jpa}(R)\} \subseteq I^{(=)}$. Now, assume that there is $h_l \in \jpa(R)$ such that $l \not\in \overline{J}$. By definition, this means that $l \in J^{k^{\ast}+1}$. But this is a contradiction since $\overline{J}=J^{k^{\ast}+1}$. We conclude that we must have $l \in \overline{J}.$

\mbox{ } \\
\noindent
\textbf{Claim 4:} The rank of the matrix $B$ cannot exceed $\sum_{h \in \mathcal{H}} |A_h|$. \\

Without loss of generality, $\overline{J}=[k]$ for some positive integer $k \leq m$. 
Then, by Claims 1-3, the matrix $B$ has the form
\begin{equation*}
B = 
\begin{pNiceArray}{ccc|ccc}[margin,parallelize-diags=false,first-row,first-col]
& \lambda_{1} & \cdots & \lambda_{k} & \lambda_{k+1}& \cdots & \lambda_{m} \\
A_{h_1} & B_{1,1} & \cdots &  B_{1,k} & 0 & \cdots & 0 \\
\Vdots & \vdots &  & \vdots & \vdots & & \vdots\\
A_{h_k} & B_{k,1} & \cdots & B_{k,k} & & &\\
\mathcal{A}^{\complement}& B_{\mathcal{A}^{\complement},1} & \cdots & B_{\mathcal{A}^{\complement},k}  & 0 & \cdots 
& 0 \\
\hline
A_{h_{k+1}}& B_{k+1,1} & \cdots & B_{k+1,k} & B_{k+1,k+1} & \cdots & B_{k+1,m}\\
\Vdots & \vdots & & \vdots &\vdots &  & \vdots \\
A_{h_m}& B_{m,1} & \cdots & B_{m,k} & B_{m,k+1}& \cdots& B_{m,m}\\
\end{pNiceArray}.
\end{equation*}
\mbox{ } \\

The rank of this matrix is smaller or equal to the sum of the minimum of the number of rows and columns of the upper left block plus the minimum of the number of rows and columns of the lower right block. The minimum of the number of rows and columns of the upper left block is given by $\min\{\sum_{h \in \overline{J}}|A_{h}| + |\mathcal{A}^{\complement}|, \sum_{h \in \overline{J}} |\ch(h)|\}$. Since $\sum_{h \in \overline{J}} |\ch(h)| = \sum_{h \in \overline{J}}|A_{h}|$, this minimum is equal to $\sum_{h \in \overline{J}}|A_{h}|$. On the other hand, the minimum of the number of rows and columns of the lower right block is given by $\min\{\sum_{h \in \overline{J}^{\complement}}|A_{h}|,  \sum_{h \in \overline{J}^{\complement}} |\ch(h)| \}$. Since $\sum_{h \in \overline{J}^{\complement}}|A_{h}| \leq \sum_{h \in \overline{J}^{\complement}}|\ch(h)|$, this minimum is given by  $\sum_{h \in \overline{J}^{\complement}}|A_{h}|$. Thus, the rank of the matrix $B$ cannot be larger than 
\[
    \sum_{h \in \overline{J}}|A_{h}| + \sum_{h \in \overline{J}^{\complement}}|A_{h}| = \sum_{h \in \mathcal{H}}|A_{h}|.
    \qedhere
\]
\end{proof}

\begin{example} \label{ex:upper-bound}
    Consider the graph in Figure~\ref{fig:dimension} (b). Then we have
\begin{align*}
C(V,2)_{h_1} &= \{\{v_1,v_2\}, \{v_1,v_3\}, \{v_2,v_3\}\}, \\
C(V,2)_{h_2} &= \{\{v_2,v_3\}, \{v_2,v_4\}, \{v_3,v_4\}\},\\
C(V,2)_{h_3} &= \{\{v_3,v_4\}, \{v_3,v_5\}, \{v_3,v_6\}, \{v_4,v_5\}, \{v_4,v_6\}, \{v_5,v_6\}\}.
\end{align*}
To obtain an upper bound for the dimension, we want to choose the subsets $A_{h_i} \subseteq C(V,2)_{h_i}$ with cardinality as large as possible but not larger than the number of children. However, to obtain a valid, i.e.\ pairwise disjoint, collection we have to choose either $|A_{h_1}|=2$ or $|A_{h_2}|=2$. If both $|A_{h_1}|=3$ and $|A_{h_2}|=3$, then we must have that $\{v_2,v_3\} \in A_{h_1} \cap A_{h_2}$, i.e., the collection is not pairwise disjoint. On the other hand, we can choose $A_{h_3}$ with cardinality at most 4, e.g.\ $\{\{v_3,v_5\}, \{v_3,v_6\}, \{v_4,v_6\}, \{v_5,v_6\}\}$ that does not intersect with any of $A_{h_1}$ and $A_{h_2}$. Thus, any pairwise disjoint collection $\mathcal{A}=(A_{h_1}, A_{h_2}, A_{h_3})$ with $|A_{h_i}| \leq |\ch(h_i)|$ has a maximal sum of cardinalities equal to $2+3+4=9$. Applying the upper bound in Theorem~\ref{thm:dimension-general}, we obtain that $\dim(F(G))) \leq  6 + 9 = 15$ which is strictly less than the expected dimension 16.
\end{example}

While Theorem \ref{thm:dimension-general} holds for any sparse factor analysis graph, also for graphs that do not satisfy ZUTA, to obtain a lower bound on the dimension, we consider more refined collections of 2-pairs that require ZUTA to be satisfied. If ZUTA is satisfied, we can assume that the latent nodes are labeled as  $\mathcal{H}=\{h_1, \ldots, h_m\}$ and the observed nodes are labeled as $V=\{v_1, \ldots, v_p\}$ such that $v_i \in \ch(h_i)$ and $v_i \not\in \bigcup_{j > i} \ch(h_j)$ for all $i=1, \ldots, m$. 

\begin{definition}
    Suppose that ZUTA is satisfied. A valid collection $\mathcal{A}=(A_h)_{h \in \mathcal{H}}$ of $2$-pairs is \emph{ZUTA-compliant} if  $\{v_i,w\} \in A_{h_i}$ for all $w \in \ch(h_i) \setminus \{v_i\}$ and for all $i \in [m]$.
\end{definition}

Note that a valid, ZUTA-compliant collection always exists for a factor analysis graph that satisfies ZUTA. Indeed, one may just choose $A_{h_i}=\{\{v_i,w\}: w \in \ch(h_i) \setminus \{v_i\}\}$. In this collection, the cardinality of each set of 2-pairs $A_{h_i}$ is equal to $|\ch(h_i)|-1$. However, there might exist other valid, ZUTA-compliant collections where the components $A_{h_i}$ potentially contain one more 2-pair, that is, $A_{h_i}$ might be chosen such that its cardinality is equal to $|\ch(h_i)|$. Each of these ZUTA-compliant collections gives a lower bound on the dimension as we prove in the next theorem.

\begin{theorem} \label{thm:dim-lower-bound}
    Let $G=(V \cup \mathcal{H}, D)$ be a factor analysis graph. Suppose that ZUTA is satisfied and let $\mathcal{A}=(A_h)_{h \in \mathcal{H}}$ be a valid collection that is ZUTA-compliant. Then,
    \begin{equation} \label{eq:lower-bound}
        \dim(F(G)) \geq |V| + \sum_{h \in \mathcal{H}} |A_h|.
    \end{equation}
\end{theorem}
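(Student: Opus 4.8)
The plan is to prove the lower bound by exhibiting parameter values at which the Jacobian of $\tau_G$ attains rank $|V| + \sum_{h}|A_h|$. As recorded just before the statement, $\dim F(G)$ equals the maximal rank of this Jacobian, which in turn equals $p + \rk(B)$ for the matrix $B$ with entries given in \eqref{eq:entries-of-jacobian}. Since the maximal rank is attained on a dense set of parameters, it suffices to produce a single square submatrix of $B$ of size $\sum_h|A_h|$ whose determinant is not identically zero in the entries of $\Lambda$. I would take the rows indexed by $\mathcal R := \bigcup_{h}A_h$ and the columns indexed by a set $\mathcal C$ built blockwise: for each latent $h_i$, take the columns $\lambda_{zh_i}$ with $z\in\ch(h_i)\setminus\{v_i\}$ when $|A_{h_i}|=|\ch(h_i)|-1$, and all columns $\lambda_{zh_i}$, $z\in\ch(h_i)$, when $|A_{h_i}|=|\ch(h_i)|$. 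Then $|\mathcal C|=\sum_h|A_h|=|\mathcal R|$, and writing $\tilde B$ for the resulting square matrix the goal becomes $\det\tilde B\not\equiv 0$.

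The decisive structural input is ZUTA in the form $\pa(v_i)\subseteq\{h_1,\dots,h_i\}$, hence $\jpa(\{v_i,w\})\subseteq\{h_1,\dots,h_i\}$ for every standard pair $\{v_i,w\}\in A_{h_i}$. I would first specialize $\Lambda$ by letting the \emph{pivot} loadings $\lambda_{v_ih_i}$ be generic, the extra-pair loadings $\lambda_{w_ah_i},\lambda_{w_bh_i}$ (present only when $|A_{h_i}|=|\ch(h_i)|$) be generic, and \emph{all} remaining loadings be zero. Blocking $\tilde B$ according to the latent nodes $h_1,\dots,h_m$ on both rows and columns, the ZUTA inequality forces every standard row $\{v_i,w\}$ to vanish on all column blocks $h_j$ with $j>i$, and the diagonal block attached to a minimal component collapses to $\lambda_{v_ih_i}I$. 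Consequently, in the absence of extra pairs, $\tilde B$ is block lower triangular with invertible diagonal, so $\det\tilde B=\prod_i\lambda_{v_ih_i}^{|A_{h_i}|}\neq 0$, and the bound follows immediately in this case.

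For the extra pairs, a direct computation shows that the diagonal block attached to an extra component $A_{h_i}$ remains nonsingular: its determinant equals, up to sign, $2\,\lambda_{v_ih_i}^{\,|\ch(h_i)|-2}\lambda_{w_ah_i}\lambda_{w_bh_i}$. The main obstacle is that keeping $\lambda_{w_ah_i},\lambda_{w_bh_i}$ nonzero also produces entries in off-diagonal blocks $(i,j)$ with $j>i$, so $\tilde B$ is no longer triangular and the clean product formula breaks. To overcome this I would argue through the Leibniz expansion of $\det\tilde B$ and track the monomial $M_\phi$ coming from the block-respecting assignment $\phi$ that sends each standard pair $\{v_i,w\}$ to its column $\lambda_{wh_i}$, sends $\{v_i,w_b\}$ to $\lambda_{v_ih_i}$, and sends the extra pair $\{w_a,w_b\}$ to $\lambda_{w_bh_i}$. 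Concretely, I would introduce a monomial order (equivalently a weight vector) in which the pivot variables $\lambda_{v_ih_i}$ dominate, refined by the ZUTA ordering of the blocks, and show that $\phi$ is the \emph{unique} perfect matching of $\mathcal R$ to $\mathcal C$ of optimal weight; the corresponding initial term is then $\pm M_\phi\neq 0$, forcing $\det\tilde B\neq 0$. I expect the hard part to be precisely this uniqueness/non-cancellation statement: one must use ZUTA to bound how far the row of an extra pair can ``escape'' into earlier blocks through pivot entries, so as to rule out competing matchings of the same weight.

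An alternative route I would keep in reserve is induction on $m$, peeling off the first latent node $h_1$ together with its \emph{pure} child $v_1$ (ZUTA guarantees $\pa(v_1)=\{h_1\}$). The standard rows $\{v_1,w\}$ are then supported entirely on the block of $h_1$, and the column $\lambda_{v_1h_1}$ is hit only by these rows, which lets one split off the $h_1$-block and reduce to the ZUTA-compliant instance on $G-h_1$. In this formulation the only terms requiring care are the single extra pair of $A_{h_1}$ and the pairs that are simultaneously children of $h_1$ and of a later latent node, which is again where the genuine work lies. Either way, once $\det\tilde B\not\equiv 0$ is established, the generic rank of $B$ is at least $\sum_h|A_h|$, giving $\dim F(G)\geq |V|+\sum_h|A_h|$ as claimed.
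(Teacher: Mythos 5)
Your setup coincides with the paper's own first step: reduce to showing that some $r\times r$ submatrix of $B$, $r=\sum_{h}|A_h|$, has determinant not identically zero; take the rows $\bigcup_h A_h$; and specialize $\Lambda$ so that only the pivot loadings $\lambda_{v_ih_i}$ and the two loadings attached to each extra pair $S_i$ are nonzero (the paper's $\Lambda^0$, with entries $1$ rather than generic). Your observations that ZUTA makes the standard rows block-lower-triangular, that the minimal diagonal blocks are $\lambda_{v_ih_i}I$, and that an extra diagonal block has determinant $\pm 2\,\lambda_{v_ih_i}^{|\ch(h_i)|-2}\lambda_{w_ah_i}\lambda_{w_bh_i}$ are all correct, and they settle the case without extra pairs. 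The gap is exactly the step you flag yourself: you never prove that the off-block-diagonal entries cannot cause cancellation, i.e.\ that $\det\widetilde{B}\not\equiv 0$ once some $|A_{h_i}|=|\ch(h_i)|$. The matching-uniqueness argument is only announced, and its guiding picture is partly wrong: you propose to ``bound how far the row of an extra pair can escape into \emph{earlier} blocks,'' but ZUTA only controls pairs containing a pivot node $v_i$. An extra pair $S_i$ contains no pivot (it lies in $C(V,2)_{h_i}\setminus C_i$), so $\jpa(S_i)$ is unconstrained and may contain $h_j$ with $j>i$; for the graph in Figure~\ref{fig:swapped-labels}, $S_2=\{v_4,v_6\}$ has joint parents $h_2$ \emph{and} $h_3$. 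A competing matching can therefore harvest one extra pivot inside block $i$ by routing the row $S_i$ into a different (possibly later) block, and ruling out that such matchings tie with $M_\phi$ is precisely the work that is missing. The same interaction is what obstructs your inductive alternative of peeling off $h_1$.

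The paper closes this gap by an explicit two-stage row reduction of $B^0$ rather than a Leibniz expansion. The enabling fact is that no pivot $v_j$ occurs in any pair of $\mathcal{S}$, so in each potentially nonzero off-diagonal entry at most one of the two relevant loadings equals $1$. Consequently (Claim~1) the sub-diagonal entries of the rows in $C_1\cup\dots\cup C_m$ can be cleared using other $C$-rows with no fill-in in the pivot columns, and (Claim~2) clearing the $\mathcal{S}$-rows creates fill-in only in the pivot columns, where the row of $S_j$ acquires the entry $-2$ in column $\lambda_{v_j,h_j}$ and $0$ elsewhere, yielding a $-2$ times a permutation block. After elimination the matrix visibly has rank $\sum_i|C_i|+|\mathcal{S}|=r$. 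If you want to complete your version, you would in effect have to reprove these two claims as a statement about the unique optimal matching, which amounts to the same bookkeeping; as written, your argument establishes the theorem only when no $A_{h_i}$ attains cardinality $|\ch(h_i)|$.
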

\begin{proof}
It suffices to show that, for generic parameter choices,  the rank of $B$ is larger or equal to $r = \sum_{h \in \mathcal{H}} |A_h|$. Let $[m]:=\{1,\ldots,m\}$ and define the index sets $I^{(=)} = \{i \in [m]: |A_{h_i}|=|\ch(h_i)|\}$ and $I^{(<)} = [m] \setminus I^{(=)}$ as in the proof of Theorem~\ref{thm:dimension-general}. Consider the sets $C_i := \{\{v_i, w\}: w \in  \ch(h_i) \setminus \{v_i\}\} \subseteq C(V,2)_{h_i}$ that have a cardinality of at most $|\ch(h_i)|-1$ and are pairwise disjoint. By definition, the collection $\mathcal{A}$ is given by
\begin{equation} \label{eq:def-collection}
    A_{h_i} = \begin{cases}
        C_i  \cup \{S_i\} & \text{if } i \in I^{(=)}, \\
        C_i & \text{if } i \in I^{(<)},
    \end{cases}
\end{equation}
where $S_i \in C(V,2)_h \setminus C_i$. 
Now, let $\ch(h_i)^{-} = \ch(h_i) \setminus \{v_i\}$ and $\lambda_{i}^{-} = \lambda_{\ch(h_i)^{-},h_i} \in \mathbb{R}^{|\ch(h_i)|-1}$. Moreover, we write $\mathcal{S} = \{S_i: i \in I^{(=)}\}$ and $\mathcal{A}^{\complement} = C(V,2) \setminus \left(\bigcup_{h \in \mathcal{H}} A_{h}\right)$. To see that the matrix $B$ has generically has at least rank $r$, we arrange it as 
\begin{equation} \label{eq:matrix-B}
B \; = \;\; \begin{pNiceArray}{ccc|ccc}[margin,parallelize-diags=false,first-row,first-col]
    & \lambda_{v_1, h_1}  & \cdots & \lambda_{v_m, h_m} & \lambda_{1}^{-} & \cdots & \lambda_{m}^{-} \\
C_1 & \lambda_{1}^{-} & &  & B_{1,1} & & \\
\Vdots & & \ddots & & \vdots & \ddots & \\
C_m  & & & \lambda_{m}^{-} & B_{m,1} & \cdots & B_{m,m}  \\
\hline
\mathcal{S} & & & & B_{\mathcal{S},1} & \cdots & B_{\mathcal{S},m} \\
\mathcal{A}^{\complement} & & & & B_{\mathcal{A}^{\complement},1} & \cdots & B_{\mathcal{A}^{\complement},m} \\
\end{pNiceArray},
\end{equation}
where void entries are zero; recall ZUTA and Equation~\eqref{eq:entries-of-jacobian}. Now, we choose a specific matrix $\Lambda^0 = (\lambda_{v_i, h_j}^0) \in \mathbb{R}^D$ such that the rank of $B^0 = B(\Lambda^0)$ is at least $r = \sum_{h \in \mathcal{H}} |A_h| = \sum_{i \in [m]} |C_i| + |\mathcal{S}|$. The existence of such a matrix implies that the rank of $B^0$ is at least $r$ for a generic choice of $\Lambda$. We choose the entries of $\Lambda^0$ as follows. For all $i \in [m]$, we set $\lambda_{v_i, h_i}^0 = 1$. Since the submatrices $B_{i,i} = \lambda_{v_i,h_i} I_{|\ch(h_i)|-1}$ are diagonal, this implies that the upper right block of $B$ is of full rank $ \sum_{h \in \mathcal{H}} (|\ch(h)|-1) = \sum_{i \in [m]} |C_i|$. The remaining non-zero entries of $\Lambda^0$ are determined as follows. For any row $S_i = \{u_i,w_i\} \in \mathcal{S}$, we set exactly two entries equal to one, namely $\lambda_{u_i,h_i}^0$ and $\lambda_{w_i,h_i}^0$. All other entries of $\Lambda^0$ remain zero. Since the matrix $\Lambda^0$ has entries in $\{0, 1\}$, the same holds for the matrix $B^0$; recall Equation~\eqref{eq:entries-of-jacobian} again. 

To show that the rank of $B^0$ is at least $r = \sum_{i \in [m]} |C_i| + |\mathcal{S}|$, we proceed by row reduction. First, we consider blocks $B_{i, j}^0$ with $j<i$ of the upper right of $B^0$, and show that we can eliminate all nonzero entries in this block by subtracting certain rows indexed by elements in $C_j$, without creating any additional nonzero entries. We show this intermediate result in Claim 1. In a second step, we eliminate all nonzero entries in $B_{\mathcal{S},i}$ for all $i \in [m]$ by subtracting specific rows in $C_1 \cup \cdots \cup C_m$. This elimination will create fill-ins in the submatrix of $B^0$ that is indexed by the rows in $\mathcal{S}$ and the columns of the left-hand side in~\eqref{eq:matrix-B}. However, the fill-ins are precisely such that the submatrix contains a multiple of a permutation matrix of size $|\mathcal{S}|$, which implies that it has full row rank. This fact will be shown in Claim 2. Both claims together imply the statement of the theorem. %Observe that Claim 1 and 2 then imply that $B^0$ has at least rank $r = \sum_{i \in [m]} |C_i| + |\mathcal{S}|$. 
We also illustrate the row reduction in Example~\ref{ex:row-reduction}.

\mbox{ } \\
\noindent
\textbf{Claim 1:} By row reduction,  the upper right block of $B^0$ can be transformed into a diagonal matrix of size $\sum_{i \in [m]} |C_i|$, while no fill-in occurs in the upper left block of $B^0$. \\

Consider a block $B_{i, j}^0$ with $j<i$ of the upper right block of $B^0$. Fix any row in this block indexed by $\{v_i,z\} \in C_i$. By Equation~\eqref{eq:entries-of-jacobian}, this row is zero if $h_j$ is not a joint parent of $\{v_i,z\}$. Now, consider the case where $h_j \in \jpa(\{v_i,z\})$. Then, the row may contain two potential nonzero entries given by $\lambda^0_{v_i,h_j}$ and $\lambda^0_{z,h_j}$ (occuring at the entries with column indices $\lambda_{z,h_j}$ and $\lambda_{v_i,h_j}$). Since $v_i$ does not appear in any pair in $\mathcal{S}$, the entry $\lambda^0_{v_i,h_j}$ must be zero. If $\lambda_{z,h_j}^0=1$, we can eliminate this entry by subtracting the row indexed by $\{v_j,v_i\}\in C_j$. The relevant submatrix of $B^0$ is given by 
\[
\begin{pNiceArray}{c|cc}[margin,parallelize-diags=false,first-row,first-col]
    & \lambda_{v_j, h_j}  & \lambda_{v_i, h_j}  & \lambda_{z, h_j}  \\
\{v_j,v_i\} & \lambda^0_{v_i,h_j} & \lambda^0_{v_j,h_j} & 0 \\
\{v_i,z\} & 0 & \lambda^0_{z,h_j} &  \lambda^0_{v_i, h_j}\\
\end{pNiceArray}
=
\begin{pNiceArray}{c|cc}[margin,parallelize-diags=false,first-row,first-col]
    & \lambda_{v_j, h_j}  & \lambda_{v_i, h_j}  & \lambda_{z, h_j} \\
\{v_j,v_i\} & 0 & 1 & 0 \\
\{v_i,z\} & 0 & 1 & 0\\
\end{pNiceArray}.
\]
No fill-in occurs in the entry indexed by $\{v_i,z\}$ and $\lambda_{v_j, h_j}$ since this would only happen if both $\lambda_{v_i,h_j}^0$ and $\lambda_{z,h_j}^0$ are equal to one. Hence, 
no fill-in occurs in the upper left block of $B^0$.  As claimed, we conclude that the upper right block of $B^0$ can be transformed into a diagonal matrix, while no fill-in occurs in the upper left block of $B^0$.

\mbox{ } \\
\noindent
\textbf{Claim 2:} By row reduction, the submatrix of $B^0$ that consists of the rows indexed by $\mathcal{S}$ can be transformed such that the left block with columns index by $\lambda_{v_1, h_1}, \ldots, \lambda_{v_m, h_m}$ is of full row rank, and the right block with columns indexed by $\lambda_1^{-}, \ldots, \lambda_m^{-}$ is zero.\\

For all $i \in [m]$, we eliminate the nonzero entries in $B_{\mathcal{S},i}$  by subtracting rows from the upper half indexed by elements in $C_1 \cup \cdots \cup C_m$. 
Fix any row in $B_{\mathcal{S},i}$ indexed by $S_j = \{y,z\} \in \mathcal{S}$. Observe that the row $B_{S_j,i}$ in the submatrix $B_{\mathcal{S},i}$ is zero if $h_i$ is not a joint parent of $S_j$, recall Equation~\eqref{eq:entries-of-jacobian}. Now, consider a joint parent $h_i \in \jpa(S_j)$. Then, the row $B_{S_j,i}$ in the submatrix $B_{\mathcal{S},i}$ may contain two potential nonzero entries given by $\lambda^0_{y,h_i}$ and $\lambda^0_{z,h_i}$ (occuring at the entries with column indices $\lambda_{z,h_i}$ and $\lambda_{y,h_i}$). If $i=j$, then both $\lambda_{y,h_j}^0$ and $\lambda_{z,h_j}^0$ are equal to one. We eliminate the two entries by subtracting the rows indexed by $\{v_j,y\} \in C_j$ and $\{v_j,z\} \in C_j$. The relevant submatrix of $B^0$ is given by
\[
\begin{pNiceArray}{c|cc}[margin,parallelize-diags=false,first-row,first-col]
    & \lambda_{v_j, h_j}  & \lambda_{y, h_j} & \lambda_{z, h_j}  \\
\{v_j,y\} & \lambda^0_{y,h_j} & \lambda^0_{v_j,h_j} & 0\\
\{v_j,z\} & \lambda^0_{z,h_j} & 0 & \lambda^0_{v_j,h_j} \\
\hline
\{y,z\} & 0 &  \lambda^0_{z,h_j} &  \lambda^0_{y,h_j}
\end{pNiceArray}
=
\begin{pNiceArray}{c|cc}[margin,parallelize-diags=false,first-row,first-col]
    & \lambda_{v_j, h_j}  & \lambda_{y, h_j} & \lambda_{z, h_j}  \\
\{v_j,y\} & 1 & 1 & 0\\
\{v_j,z\} & 1 & 0 & 1 \\
\hline
\{y,z\} & 0 &  1 &  1
\end{pNiceArray}.
\]
Fill-in occurs in the entry indexed by $\{y,z\}$ and $\lambda_{v_j, h_j}$ and is equal  to $-2$, i.e, after elimination this submatrix of $B^0$ if given  by
\[
\begin{pNiceArray}{c|cc}[margin,parallelize-diags=false,first-row,first-col]
    & \lambda_{v_j, h_j}  & \lambda_{y, h_j} & \lambda_{z, h_j}  \\
\{v_j,y\} & 1 & 1 & 0\\
\{v_j,z\} & 1 & 0 & 1 \\
\hline
\{y,z\} & -2 &  0 &  0
\end{pNiceArray}.   
\]
Now, consider a joint parent $h_i \in \jpa(S_j)$ with $i \neq j$. 
%However, this is the only fill-in that occurs. If there is another joint latent parent $h_i \in \jpa(\{y,z\})$ but $\{y,z\} \neq S_i$, then
In this case, at least one of $\lambda_{y,h_i}^0$ and $\lambda_{z,h_i}^0$ is  zero. Since we are done if both are zero, we can assume w.l.o.g.~that $\lambda_{z,h_i}^0=1$ and $\lambda_{y,h_i}^0=0$. We  eliminate this entry by subtracting the row indexed by $\{v_i,y\} \in C_i$. The relevant submatrix of $B^0$ is given by
\[
\begin{pNiceArray}{c|cc}[margin,parallelize-diags=false,first-row,first-col]
    & \lambda_{v_i, h_i}  & \lambda_{y, h_i} & \lambda_{z, h_i}  \\
\{v_i,y\} & \lambda^0_{y,h_i} & \lambda^0_{v_i,h_i} & 0 \\
\hline
\{y,z\} & 0 &  \lambda^0_{z,h_i} & \lambda^0_{y,h_i}
\end{pNiceArray}
=
\begin{pNiceArray}{c|cc}[margin,parallelize-diags=false,first-row,first-col]
    & \lambda_{v_i, h_i}  & \lambda_{y, h_i}  & \lambda_{z, h_i}  \\
\{v_i,y\} & 0 & 1 & 0\\
\hline
\{y,z\} & 0 &  1 & 0
\end{pNiceArray}.
\]
No fill-in occurs in the entry indexed by $\{y,z\}$ and $\lambda_{v_i, h_i}$ since this would only happen if both $\lambda_{y,h_i}^0$ and $\lambda_{z,h_i}^0$ are equal to one. 

To summarize, we have shown that, after elimination, an entry of $B^0$ with row indexed by $S_j \in \mathcal{S}$ and column indexed by $\lambda_{v_i,h_i}$ is equal to $-2$ if $i=j$ and $0$ else. Hence, after elimination, the submatrix of $B^0$ that is indexed by the rows in $\mathcal{S}$ and the columns of the left-hand side in~\eqref{eq:matrix-B} contains a permutation of $-2 \, I_{|\mathcal{S}|}$ and is therefore of full row rank. On the other hand, after elimination, the submatrix of $B^0$ that is indexed by the rows in $\mathcal{S}$ and the columns of the right-hand side in in~\eqref{eq:matrix-B} is zero, i.e., we have shown Claim 2. \\

We conclude the proof by noting that Claim 1 and Claim 2 directly imply that the rank of the matrix $B^0$ is at least $\sum_{i \in [m]} |C_i| + |\mathcal{S}| = r$, as we have claimed.
\end{proof}

\begin{figure}[t]
\begin{tikzpicture}[align=center]
    \node[fill=lightgray] (h1) at (-1.5,1) {$h_1$};
    \node[fill=lightgray] (h2) at (-0.5,-1) {$h_2$};
    \node[fill=lightgray] (h3) at (1,1) {$h_3$};
    
    \node[] (1) at (-2.5,0) {$v_1$};
    \node[] (2) at (-1.5,0) {$v_2$};
    \node[] (3) at (2.5,0) {$v_6$};
    \node[] (4) at (0.5,0) {$v_4$};
    \node[] (5) at (1.5,0) {$v_5$};
    \node[] (6) at (-0.5,0) {$v_3$};
    
    \draw[MyBlue] [-latex] (h1) edge (1);
    \draw[MyBlue] [-latex] (h1) edge (2);
    \draw[MyBlue] [-latex] (h1) edge (3);
    \draw[MyRed] [-latex] (h2) edge (2);
    \draw[MyRed] [-latex] (h2) edge (3);
    \draw[MyRed] [-latex] (h2) edge (4);
    \draw[MyGreen] [-latex] (h3) edge (3);
    \draw[MyGreen] [-latex] (h3) edge (4);
    \draw[MyGreen] [-latex] (h3) edge (5);
    \draw[MyGreen] [-latex] (h3) edge (6);
\end{tikzpicture}
\caption{Same graph as in Figure~\ref{fig:dimension} (b) with swapped labels $v_3$ and $v_6$.}
\label{fig:swapped-labels}
\end{figure}

\begin{example} \label{ex:row-reduction}
In this example, we illustrate the row reduction in the proof of Theorem~\ref{thm:dim-lower-bound}. Consider the graph in Figure~\ref{fig:swapped-labels} and the ZUTA-compliant, valid collection
\begin{align*}
A_{h_1} &= \{\{v_1,v_2\}, \{v_1,v_6\}\}, \\
A_{h_2} &= \{\{v_2,v_4\}, \{v_2,v_6\}, \{v_4,v_6\}\}, \\
A_{h_3} &= \{\{v_3,v_4\}, \{v_3,v_5\}, \{v_3,v_6\}, \{v_5,v_6\}\}.
\end{align*}
It follows that
\begin{align*}
C_1 &= \{\{v_1,v_2\}, \{v_1,v_6\}\}, \qquad
C_2 = \{\{v_2,v_4\}, \{v_2,v_6\}\}, \qquad
C_3 = \{\{v_3,v_4\}, \{v_3,v_5\}, \{v_3,v_6\}\},
\end{align*}
and that $\mathcal{S}=\{S_2,S_3\}$ with $S_2 = \{v_4,v_6\}$ and $S_3 = \{v_5,v_6\}$. Now, the matrix $B$ in~\eqref{eq:matrix-B} is given by
\begin{equation*} 
B = \; \begin{pNiceArray}{ccc|ccccccc}[margin,parallelize-diags=false,first-row,first-col]
    & \lambda_{v_1, h_1}  & \lambda_{v_2, h_2} & \lambda_{v_3, h_3} & \lambda_{v_2,h_1} & \lambda_{v_6,h_1} & \lambda_{v_4,h_2} & \lambda_{v_6,h_2} & \lambda_{v_4,h_3} &\lambda_{v_5,h_3} & \lambda_{v_6,h_3}\\
\{v_1,v_2\} & \lambda_{v_2,h_1} & & & \lambda_{v_1,h_1} & & & & & & \\
\{v_1,v_6\} & \lambda_{v_6,h_1} & & & & \lambda_{v_1,h_1} & & & & & \\
\hdashline
\{v_2,v_4\} & & \lambda_{v_4,h_2} & & & & \lambda_{v_2,h_2} & & & & \\
\{v_2,v_6\} & & \lambda_{v_6,h_2} & &  \lambda_{v_6,h_1} &  \lambda_{v_2,h_1} & & \lambda_{v_2,h_2} & & & \\
\hdashline
\{v_3,v_4\} & & & \lambda_{v_4,h_3} & & & & & \lambda_{v_3,h_3} & & \\
\{v_3,v_5\} & & & \lambda_{v_5,h_3} & & & & & & \lambda_{v_3,h_3} & \\
\{v_3,v_6\} & & & \lambda_{v_6,h_3} & & & & & & & \lambda_{v_3,h_3} \\
\hline
\{v_4,v_6\} & & & & & & \lambda_{v_6,h_2} & \lambda_{v_4,h_2} & \lambda_{v_6,h_3} & & \lambda_{v_4,h_3} \\
\{v_5,v_6\} & & & & & & & & & \lambda_{v_6,h_3} & \lambda_{v_5,h_3} \\
\hdashline
\{v_4,v_5\} & & & & & & & & \lambda_{v_5,h_3} & \lambda_{v_4,h_3} &  \\
\end{pNiceArray},
\end{equation*}
where void entries are zero. The matrix $B^0$ is constructed by setting the parameters $\lambda^0_{v_1,h_1}$, $\lambda^0_{v_2,h_2}$, $\lambda^0_{v_3,h_3}$, $\lambda^0_{v_4,h_2}$, $\lambda^0_{v_6,h_2}$, $\lambda^0_{v_5,h_3}$, and $\lambda^0_{v_6,h_3}$ to one, and all other parameters to zero. Hence, $B^0$ is given by \looseness=-1
\begin{equation*} 
B^0 = \; \begin{pNiceArray}{ccc|ccccccc}[margin,parallelize-diags=false,first-row,first-col]
    & \lambda_{v_1, h_1}  & \lambda_{v_2, h_2} & \lambda_{v_3, h_3} & \lambda_{v_2,h_1} & \lambda_{v_6,h_1} & \lambda_{v_4,h_2} & \lambda_{v_6,h_2} & \lambda_{v_4,h_3} &\lambda_{v_5,h_3} & \lambda_{v_6,h_3}\\
\{v_1,v_2\} & & & & 1 & & & & & & \\
\{v_1,v_6\} & & & & & 1 & & & & & \\
\hdashline
\{v_2,v_4\} & & 1 & & & & 1 & & & & \\
\{v_2,v_6\} & & 1 & & & & & 1 & & & \\
\hdashline
\{v_3,v_4\} & & & & & & & & 1 & & \\
\{v_3,v_5\} & & & 1 & & & & & & 1 & \\
\{v_3,v_6\} & & & 1 & & & & & & & 1 \\
\hline
\{v_4,v_6\} & & & & & & 1 & 1 & 1 & & \\
\{v_5,v_6\} & & & & & & & & & 1 & 1 \\
\hdashline
\{v_4,v_5\} & & & & & & & & 1 & &  \\
\end{pNiceArray}.
\end{equation*}
In this example, the statement of Claim 1 is already satisfied since the upper right block is a diagonal matrix. After eliminating the ones in the rows indexed by $\{v_4,v_6\}$ and $\{v_5,v_6\}$, the matrix is given by \looseness=-1
\begin{equation*} 
\begin{pNiceArray}{ccc|ccccccc}[margin,parallelize-diags=false,first-row,first-col]
    & \lambda_{v_1, h_1}  & \lambda_{v_2, h_2} & \lambda_{v_3, h_3} & \lambda_{v_2,h_1} & \lambda_{v_6,h_1} & \lambda_{v_4,h_2} & \lambda_{v_6,h_2} & \lambda_{v_4,h_3} &\lambda_{v_5,h_3} & \lambda_{v_6,h_3}\\
\{v_1,v_2\} & & & & 1 & & & & & & \\
\{v_1,v_6\} & & & & & 1 & & & & & \\
\hdashline
\{v_2,v_4\} & & 1 & & & & 1 & & & & \\
\{v_2,v_6\} & & 1 & & & & & 1 & & & \\
\hdashline
\{v_3,v_4\} & & & & & & & & 1 & & \\
\{v_3,v_5\} & & & 1 & & & & & & 1 & \\
\{v_3,v_6\} & & & 1 & & & & & & & 1 \\
\hline
\{v_4,v_6\} & & -2 & & & & & & & & \\
\{v_5,v_6\} & & & -2 & & & & & & & \\
\hdashline
\{v_4,v_5\} & & & & & & & & 1 & &  \\
\end{pNiceArray},
\end{equation*}
and we conclude that Claim 2 is satisfied. It is also easy to see that this matrix has at least rank $9$, i.e., the submatrix consisting of all but the last row has full row rank.
\end{example}

There might be several relabelings of observed and latent nodes such that ZUTA is satisfied. For each of these relabelings, one might potentially obtain a different lower bound in Theorem \ref{thm:dim-lower-bound}. Thus the best lower bound is obtained by maximizing the sum of cardinalities over all ZUTA-compliant, valid collections and over all relabelings such that ZUTA is satisfied. If the maximized lower bound from Theorem \ref{thm:dim-lower-bound} coincides with the upper bound in Theorem \ref{thm:dimension-general}, we obtain a formula for the dimension.

\begin{corollary} \label{cor:dimension-formula}
     Let $G=(V \cup \mathcal{H}, D)$ be a factor analysis graph and suppose that ZUTA is satisfied. If there is a ZUTA-compliant, valid collection $\mathcal{A}=(A_h)_{h \in \mathcal{H}}$ that has maximal sum of cardinalities $\sum_{h \in \mathcal{H}}|A_h|$ among all valid collections (which are not necessarily ZUTA-compliant), then 
     \[
        \dim(F(G)) = |V| + \sum_{h \in \mathcal{H}} |A_h|.
     \]
\end{corollary}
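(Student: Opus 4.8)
The plan is to obtain the claimed equality by sandwiching $\dim(F(G))$ between the upper bound of Theorem~\ref{thm:dimension-general} and the lower bound of Theorem~\ref{thm:dim-lower-bound}, applying both to the \emph{same} collection $\mathcal{A}$ supplied by the hypothesis. The entire substance of the corollary is the observation that this single collection simultaneously satisfies the (different) hypotheses of the two theorems, so no new estimates on $\rk(B)$ are required; everything has already been done in the proofs of those two results.

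First I would apply the upper bound. By hypothesis, $\mathcal{A}$ is a valid collection whose sum of cardinalities $\sum_{h \in \mathcal{H}} |A_h|$ is maximal among \emph{all} valid collections. This is exactly the hypothesis of Theorem~\ref{thm:dimension-general}, so that theorem applies to this $\mathcal{A}$ and yields
\[
    \dim(F(G)) \leq |V| + \sum_{h \in \mathcal{H}} |A_h|.
\]

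Next I would apply the lower bound. Since ZUTA is assumed to be satisfied and $\mathcal{A}$ is by hypothesis a valid, ZUTA-compliant collection, Theorem~\ref{thm:dim-lower-bound} applies verbatim to the same $\mathcal{A}$ and gives
\[
    \dim(F(G)) \geq |V| + \sum_{h \in \mathcal{H}} |A_h|.
\]
Combining the two displayed inequalities immediately forces the stated equality, which completes the argument.

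There is no genuine obstacle in the deduction itself; the only point requiring care—and the reason the corollary carries a nontrivial hypothesis—is the mismatch between the two optimization domains. The maximum governing Theorem~\ref{thm:dimension-general} is taken over \emph{all} valid collections, whereas the lower bound of Theorem~\ref{thm:dim-lower-bound} is available only for ZUTA-\emph{compliant} collections. In general a globally maximizing valid collection need not be ZUTA-compliant, and a ZUTA-compliant collection need not attain the global maximum, so the two bounds can leave a gap. The hypothesis of the corollary is precisely the assertion that a single collection realizes both optima at once, and it is exactly this coincidence that makes the upper and lower bounds agree and thereby determines $\dim(F(G))$ exactly.
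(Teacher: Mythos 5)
Your proof is correct and is exactly the deduction the paper intends: the paper states Corollary~\ref{cor:dimension-formula} without a separate proof precisely because it follows by applying Theorem~\ref{thm:dimension-general} and Theorem~\ref{thm:dim-lower-bound} to the single collection $\mathcal{A}$ and sandwiching. Your closing remark correctly identifies why the hypothesis is nontrivial, namely that a globally maximal valid collection need not be ZUTA-compliant (cf.\ the example around Figure~\ref{fig:different-bounds}, where the two bounds genuinely differ).
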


\begin{example}
Consider the graph in Figure~\ref{fig:swapped-labels} and the ZUTA-compliant, valid collection
\begin{align*}
A_{h_1} &= \{\{v_1,v_2\}, \{v_1,v_6\}\}, \\
A_{h_2} &= \{\{v_2,v_4\}, \{v_2,v_6\}, \{v_4,v_6\}\}, \\
A_{h_3} &= \{\{v_3,v_4\}, \{v_3,v_5\}, \{v_3,v_6\}, \{v_5,v_6\}\}.
\end{align*}
Observe that the graph in Figure~\ref{fig:swapped-labels} is equivalent to the  graph in Figure~\ref{fig:dimension} (b) when swapping the labels of $v_3$ and $v_6$. Hence, we have already seen in Example~\ref{ex:upper-bound} that this collection has maximal sum of cardinalities among all valid collections. It follows by Corollary~\ref{cor:dimension-formula} that $\dim(F(G))=6+9=15$.
\end{example}

If there is one pure child per latent node, the dimension formula from Corollary \ref{cor:dimension-formula} always holds.

\begin{corollary} \label{cor:1-pure-children}
     Let $G=(V \cup \mathcal{H}, D)$ be a factor analysis graph. Suppose that for every latent node $h \in \mathcal{H}$,  
     there is an observed node $v \in V$ such that $\pa(v)=\{h\}$. Let $\mathcal{A}=(A_h)_{h \in \mathcal{H}}$ be a valid collection that has maximal sum of cardinalities $\sum_{h \in \mathcal{H}}|A_h|$ among all valid collections. Then 
     \[
        \dim(F(G)) = |V| + \sum_{h \in \mathcal{H}} |A_h|.
     \]
\end{corollary}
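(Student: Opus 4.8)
The plan is to reduce the statement to Corollary~\ref{cor:dimension-formula} by exhibiting a ZUTA-compliant valid collection whose sum of cardinalities is maximal among all valid collections. First I would record that the one-pure-child hypothesis implies ZUTA: as noted in Remark~\ref{rem:pure-children}, any $k$-pure-children condition with $k \geq 1$ implies ZUTA. Concretely, fix any ordering $\mathcal{H} = \{h_1, \ldots, h_m\}$ and, for each $i$, pick an observed node $v_i$ with $\pa(v_i) = \{h_i\}$. Since a pure child has a single parent, the nodes $v_i$ are automatically distinct and satisfy $v_i \notin \bigcup_{j > i}\ch(h_j)$, so this labeling witnesses ZUTA.

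The key structural observation, which I would isolate next, is that purity forces the pairs $C_i := \{\{v_i,w\} : w \in \ch(h_i)\setminus\{v_i\}\}$ to be usable only in slot $i$. Indeed, for $w \in \ch(h_i)$ we have $\jpa(\{v_i,w\}) = \pa(v_i) \cap \pa(w) = \{h_i\}$, so each pair of $C_i$ has $h_i$ as its unique joint parent and therefore lies in $C(V,2)_{h_i}$ but in no $C(V,2)_{h_j}$ with $j \neq i$.

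With this in hand, I would start from an arbitrary valid collection $\mathcal{A} = (A_h)_{h \in \mathcal{H}}$ of maximal sum of cardinalities and transform it, one latent node at a time, into a ZUTA-compliant collection $\mathcal{A}'$ with the same sum. Fix $i$. If a pair $R \in C_i$ is missing from $A_{h_i}$, then by the observation $R$ belongs to $\mathcal{A}^{\complement}$. When $|A_{h_i}| < |\ch(h_i)|$, simply adjoining $R$ to $A_{h_i}$ yields a valid collection of strictly larger sum, contradicting maximality; hence in this case $C_i \subseteq A_{h_i}$ already. When $|A_{h_i}| = |\ch(h_i)| = |C_i| + 1$, every missing $R \in C_i$ can be swapped in at the cost of discarding some $Q \in A_{h_i}\setminus C_i$ (such a $Q$ exists since $|A_{h_i}\setminus C_i| \geq 2$ whenever $C_i \not\subseteq A_{h_i}$): replacing $A_{h_i}$ by $(A_{h_i}\setminus\{Q\})\cup\{R\}$ preserves cardinality and validity, because $R$ has unique joint parent $h_i$ (so it conflicts with no other block) and $Q$, lying in $C(V,2)_{h_i}$, cannot belong to any $C_j$ for $j \neq i$, so discarding it does not disturb the other blocks. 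Iterating enforces $C_i \subseteq A_{h_i}$ without touching $A_{h_j}$ for $j \neq i$, so processing all $i$ in turn produces a ZUTA-compliant valid collection $\mathcal{A}'$ whose sum of cardinalities equals that of $\mathcal{A}$, hence is maximal. Corollary~\ref{cor:dimension-formula} then gives the dimension formula, and since all maximal valid collections share the same sum, the formula holds for the collection $\mathcal{A}$ in the statement.

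I expect the main obstacle to be the bookkeeping in the swapping argument of the last paragraph, specifically verifying that the swaps never violate pairwise disjointness and never undo progress made at other latent nodes. The purity observation is exactly what makes this clean: it guarantees that the pairs of $C_i$ are ``private'' to block $i$, so all modifications are local to $A_{h_i}$ and cardinality-preserving, and no interaction between distinct blocks can occur.
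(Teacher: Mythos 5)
Your proposal is correct and follows essentially the same route as the paper: both reduce to Corollary~\ref{cor:dimension-formula} by producing a ZUTA-compliant valid collection with the same maximal sum of cardinalities, and both hinge on the observation that purity of $v_i$ forces $\jpa(\{v_i,w\})=\{h_i\}$, so the pairs in $C_i$ are private to block $i$. The only cosmetic difference is that the paper builds the new collection directly as $C_i$ or $C_i\cup\{S_i\}$ with $S_i\in A_{h_i}$ a pair avoiding $v_i$, whereas you reach the same endpoint by iterated cardinality-preserving swaps.
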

\begin{proof}
Let $\mathcal{H}=\{h_1, \ldots, h_m\}$ and relabel the observed nodes $V=\{v_1, \ldots, v_p\}$ such that $v_i$ is a pure child of $h_i$, i.e., $\pa(v_i)=\{h_i\}$. To show the claim, it is enough by Corollary~\ref{cor:dimension-formula} to define another collection $\widetilde{\mathcal{A}}=(\widetilde{A}_h)_{h \in \mathcal{H}}$ that is also valid and has the same sum of cardinalities as $\mathcal{A}$, but is additionally ZUTA-compliant.

As in the proof of Theorem \ref{thm:dim-lower-bound}, define the index sets $I^{(=)} = \{i \in [m]: |A_{h_i}|=|\ch(h_i)|\}$ and $I^{(<)} = [m] \setminus I^{(=)}$. Consider the sets $C_i := \{\{v_i, w\}: w \in  \ch(h_i) \setminus \{v_i\}\} \subseteq C(V,2)_{h_i}$ that have a cardinality of at most $|\ch(h_i)|-1$. Moreover, they are pairwise disjoint since $h_i$ is the only parent of  $v_i$ with the given labeling.  
If $i \in I^{(=)}$, observe that the intersection $\{\{u,w\}: u,w \in \ch(h_i) \setminus \{v_i\} \} \cap A_{h_i}$ has to be nonempty. For any pair $\{u,w\}$ in this intersection, it must hold that neither $u$ nor $w$ is equal to $v_j$ for all $j \in [m] \setminus \{i\}$, since $v_j$ is a pure child of $h_j$. Hence, the pair $\{u,w\}$ is not contained in any $C_j$. Now, we choose a pair $S_i =\{u_i, w_i\}$ from the intersection $\{\{u,w\}: u,w \in \ch(h_i) \setminus \{v_i\} \} \cap A_{h_i}$ for all $i \in I^{(=)}$, and we define $\widetilde{\mathcal{A}}=(\widetilde{A}_h)_{h \in \mathcal{H}}$ to be the collection given by 
\[
    \widetilde{A}_{h_i} = \begin{cases}
        C_i  \cup \{S_i\} & \text{if } i \in I^{(=)}, \\
        C_i & \text{if } i \in I^{(<)}.
    \end{cases}
\]
By construction, this collection is valid and ZUTA-compliant. In particular, it is pairwise disjoint. Moreover, the sum of cardinalities is unchanged, that is, $\sum_{h \in \mathcal{H}}|\widetilde{A}_h| = \sum_{h \in \mathcal{H}}|A_h|$.
\end{proof}

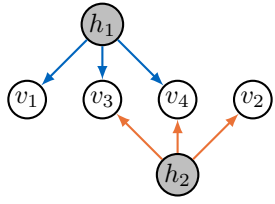
\begin{figure}[t]
\begin{center}
\begin{tikzpicture}[align=center]
    \node[fill=lightgray] (h1) at (-1.5,1) {$h_1$};
    \node[fill=lightgray] (h2) at (-0.5,-1) {$h_2$};

    \node[] (1) at (-2.5,0) {$v_1$};
    \node[] (2) at (-1.5,0) {$v_3$};
    \node[] (3) at (-0.5,0) {$v_4$};
    \node[] (4) at (0.5,0) {$v_2$};
    
    \draw[MyBlue] [-latex] (h1) edge (1);
    \draw[MyBlue] [-latex] (h1) edge (2);
    \draw[MyBlue] [-latex] (h1) edge (3);
    \draw[MyRed] [-latex] (h2) edge (2);
    \draw[MyRed] [-latex] (h2) edge (3);
    \draw[MyRed] [-latex] (h2) edge (4);

\end{tikzpicture}
\caption{A graph where each latent nodes has exactly one pure child.}
\label{fig:1-pure-children}
\end{center}
\end{figure}

\begin{example}
Consider the graph in Figure~\ref{fig:1-pure-children}, where both latent nodes have exactly one pure child.   We have
\begin{align*}
C(V,2)_{h_1} &= \{\{v_1,v_3\}, \{v_1,v_4\}, \{v_3,v_4\}\}, \\
C(V,2)_{h_2} &= \{\{v_2,v_3\}, \{v_2,v_4\}, \{v_3,v_4\}\},
\end{align*}
and both latent variables have three children.
It is easy to see that in any valid collection with maximal sum of cardinalities it must be that either $|A_{h_1}|=2$ or $|A_{h_2}|=2$. If both $A_{h_1}$ and $A_{h_2}$ have cardinality equal to $3=|\ch(h_i)|$, then we must have that $\{v_3,v_4\} \in A_{h_1} \cap A_{h_2}$, that is, the collection is not pairwise disjoint.
By Corollary~\ref{cor:1-pure-children}, the dimension is therefore given by 
\[
    \dim(F(G)) =  |V| + \left(\sum_{h \in \mathcal{H}} |\ch(h)|\right) - 1 = 4 + 6 - 1 = 9,
\]
which is one less than the expected dimension.
\end{example}

The next example considers a graph, where our upper and lower bound do not coincide, even after potential relabeling of the nodes. Note that the example  is already quite complex since it involves 5 latent variables and 9 observed variables. However, it is the smallest nontrivial example we could find where the lower bound is different from the upper bound.

\begin{example}
    Computations using our code on \href{https://mathrepo.mis.mpg.de/sparse-factor-analysis/}{MathRepo} show that the dimension of the model corresponding to the graph in Figure~\ref{fig:different-bounds} is $35$, which coincides with the expected dimension from counting parameters. It is easy to find a valid collection $\mathcal{A}=(A_h)_{h \in \mathcal{H}}$ that has sum of cardinalities $\sum_{h \in \mathcal{H}}|A_h|$ equal to the total number of children $\sum_{h \in \mathcal{H}}|\ch(h)|=26$. However, there are no relabelings of the latent and observed nodes such that ZUTA is satisfied and there is a \emph{ZUTA-compliant} valid collection that also has the sum of cardinalities equal to the total number of children. Hence, the lower bound from Theorem~\ref{thm:dim-lower-bound} is different than the upper bound from Theorem~\ref{thm:dimension-general}. For example, with the labeling as displayed in Figure~\ref{fig:different-bounds}, any valid, ZUTA-compliant collection has sum of cardinalities at most $23$. If we permute the labels of the nodes  $v_5$ and $v_6$ to the end, that is, the nodes $v_5$ and $v_6$ become $v_8$ and $v_9$, then it is possible to construct a ZUTA-compliant collection of cardinalities at most $24$, but this is still less than the total number of children.
\end{example}

By Theorem~\ref{thm:dim-lower-bound}, a model has expected dimension $|V|+|D|$ if is satisfies ZUTA and there is a ZUTA-compliant, valid collection $\mathcal{A}=(A_h)_{h \in \mathcal{H}}$ such that $A_h \subseteq C(V,2)_h$ has cardinality $|\! \ch(h)|$ for all $h \in \mathcal{H}$. Hence, a trivial necessary condition for expected dimension is that each latent node has at least three children. 
If a latent node $h \in \ch(h)$ has at most two children,  we have that $|C(V,2)_h| < |\ch(h)|$ and thus we must have that the cardinality of $A_h$ is strictly smaller than the number of children.
For a class of factor analysis graphs that satisfy stronger sparsity conditions than ZUTA, we obtain that the dimension is always equal to the expected dimension. Providing a lower bound that also holds for graphs violating ZUTA appears to be challenging, and we have not found a feasible approach that goes beyond case-by-case studies for each graph. 

\begin{corollary} \label{cor:2-children-dim}
    Let $G=(V \cup \mathcal{H}, D)$ be a factor analysis graph such that $|\ch(h)| \geq 3$ for all $h \in \mathcal{H}$. Moreover, assume that there exist relabelings of the latent and observed nodes such that $\mathcal{H}=\{h_1, \ldots, h_m\}$ and $V=\{v_1, \ldots, v_p\}$ and it holds that $v_{2i-1},v_{2i} \in \ch(h_i)$ and $v_{2i-1},v_{2i} \not\in \bigcup_{j > i} \ch(h_j)$ for all $i=1,\ldots,m$. Then, we have
    \[
        \dim(F(G)) = |V| + |D|.
    \]
\end{corollary}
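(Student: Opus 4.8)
The plan is to deduce the equality from \Cref{cor:dimension-formula} by exhibiting a single ZUTA-compliant valid collection whose sum of cardinalities equals $|D| = \sum_{h \in \mathcal{H}} |\ch(h)|$. Since every valid collection satisfies $|A_h| \leq |\ch(h)|$, this value is the largest possible among all valid collections, so producing such a collection makes it automatically maximal and immediately triggers \Cref{cor:dimension-formula}. First I would record that the hypothesis implies ZUTA with the given latent order: for each $i$ the node $v_{2i-1}$ lies in $\ch(h_i)$ but not in $\bigcup_{j>i}\ch(h_j)$, so $v_{2i-1}$ serves as the distinguished observed node. Write $a_i := v_{2i-1}$ and $b_i := v_{2i}$. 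The structural feature I will exploit throughout is that \emph{both} $a_i$ and $b_i$ fail to be children of later latent nodes, i.e.\ $a_i, b_i \notin \ch(h_j)$ for every $j > i$.

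Second, I would construct the collection. ZUTA-compliance forces $A_{h_i} \supseteq C_i := \{\{a_i, w\} : w \in \ch(h_i)\setminus\{a_i\}\}$, which has cardinality $|\ch(h_i)| - 1$. To reach cardinality $|\ch(h_i)|$ I append a single extra $2$-pair $S_i := \{b_i, c_i\}$, where $c_i$ is any third child in $\ch(h_i) \setminus \{a_i, b_i\}$; such a $c_i$ exists precisely because $|\ch(h_i)| \geq 3$. Since $a_i \notin \{b_i,c_i\}$ we have $S_i \notin C_i$, so $A_{h_i} := C_i \cup \{S_i\}$ satisfies $|A_{h_i}| = |\ch(h_i)|$ and $A_{h_i} \subseteq C(V,2)_{h_i}$. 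It then remains only to check that $\mathcal{A} = (A_{h_i})_{i}$ is pairwise disjoint.

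Third, and this is the only genuinely delicate point, I verify disjointness using the property above. The sets $C_i$ are pairwise disjoint because a pair of $C_i$ contains $a_i$, while for $j > i$ every pair of $C_j$ has both endpoints in $\ch(h_j)$ and $a_i \notin \ch(h_j)$. For the extra pairs: when $j > i$ we have $b_i \notin \ch(h_j)$, so $b_i$ is not an endpoint of any $C_j$-pair and hence $S_i \notin C_j$; when $j < i$ a $C_j$-pair must contain $a_j$, yet $a_j \neq b_i$ and $a_j \notin \ch(h_i) \ni c_i$, so $a_j \notin S_i$ and again $S_i \notin C_j$. Finally the $S_i$ are pairwise distinct, since for $i < i'$ we have $b_i \notin \ch(h_{i'})$, so $b_i$ is not an endpoint of $S_{i'} \subseteq \ch(h_{i'})$, whereas $b_i \in S_i$. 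Thus $\mathcal{A}$ is a valid, ZUTA-compliant collection with $\sum_{h}|A_h| = |D|$, and \Cref{cor:dimension-formula} gives $\dim(F(G)) = |V| + |D|$. The step I expected to be the obstacle was guaranteeing that the appended pairs $S_i$ can be chosen simultaneously disjoint across all latent nodes; the resolution is exactly the observation that the two distinguished children of $h_i$ never reappear as children of a later latent node, which rules out every possible collision at once.
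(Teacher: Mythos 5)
Your proof is correct and follows essentially the same route as the paper: both construct the collection $A_{h_i} = \{\{v_{2i-1},w\}: w \in \ch(h_i)\setminus\{v_{2i-1}\}\} \cup \{\{v_{2i}, c_i\}\}$ for a third child $c_i$, observe it achieves the maximal possible sum of cardinalities $|D|$, and invoke Corollary~\ref{cor:dimension-formula}. The only difference is that you spell out the pairwise-disjointness check that the paper dismisses with ``Clearly, the collection $\mathcal{A}$ is valid,'' and your verification is accurate.
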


\begin{proof} 
For every latent node $h_i$ there are at least three children. Two of them are given by $v_{2i-1}$ and $v_{2i}$ and we denote an arbitrary third child by $w_i$. Note that the children $v_{2i-1}$ and $v_{2i}$ are different for every $i \in [m]$, that is $\{v_{2i-1}, v_{2i}\} \cap \{v_{2j-1}, v_{2j}\} = \emptyset$ for $i \neq j$,  while the third child $w_i$ might also be a child of some other latent node $h_j$. In particular, it might be that $w_i=w_j$. We define a collection $\mathcal{A}=(A_h)_{h \in \mathcal{H}}$ by
\[
    A_{h_i} = \{\{v_{2i-1}, w\}: w \in \ch(h_i) \setminus \{v_{2i-1}\}\} \cup \{\{v_{2i}, w_i\}\} \subseteq C(V,2)_{h_i}.
\]
Clearly, the collection $\mathcal{A}$ is valid. It is also ZUTA-compliant if we relabel the nodes $v_{2i-1}$ to be $v_i$ for all $i \in [m]$. Since $|A_h|=|\ch(h)|$, the sum of cardinalities $\sum_{h \in \mathcal{H}} |A_h|$ is maximal and it is equal to   $\sum_{h \in \mathcal{H}} |\ch(h)| = |D|$.
\end{proof}

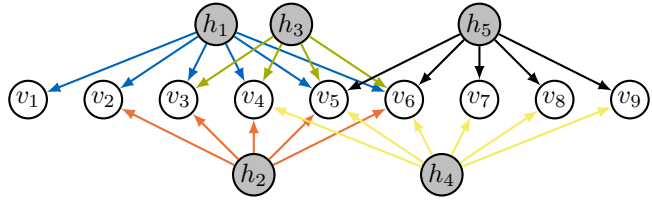
\begin{figure}[t]
\begin{center}
\begin{tikzpicture}[align=center]
    \node[fill=lightgray] (h1) at (0,1) {$h_1$};
    \node[fill=lightgray] (h2) at (0.5,-1) {$h_2$};
    \node[fill=lightgray] (h3) at (1,1) {$h_3$};
    \node[fill=lightgray] (h4) at (3,-1) {$h_4$};
    \node[fill=lightgray] (h5) at (3.5,1) {$h_5$};

    \node[] (1) at (-2.5,0) {$v_1$};
    \node[] (2) at (-1.5,0) {$v_2$};
    \node[] (3) at (-0.5,0) {$v_3$};
    \node[] (4) at (0.5,0) {$v_4$};
    \node[] (5) at (1.5,0) {$v_5$};
    \node[] (6) at (2.5,0) {$v_6$};
    \node[] (7) at (3.5,0) {$v_7$};
    \node[] (8) at (4.5,0) {$v_8$};
    \node[] (9) at (5.5,0) {$v_9$};
    
    \draw[MyBlue] [-latex] (h1) edge (1);
    \draw[MyBlue] [-latex] (h1) edge (2);
    \draw[MyBlue] [-latex] (h1) edge (3);
    \draw[MyBlue] [-latex] (h1) edge (4);
    \draw[MyBlue] [-latex] (h1) edge (5);
    \draw[MyBlue] [-latex] (h1) edge (6);
    \draw[MyRed] [-latex] (h2) edge (2);
    \draw[MyRed] [-latex] (h2) edge (3);
    \draw[MyRed] [-latex] (h2) edge (4);
    \draw[MyRed] [-latex] (h2) edge (5);
    \draw[MyRed] [-latex] (h2) edge (6);
    \draw[MyGreen] [-latex] (h3) edge (3);
    \draw[MyGreen] [-latex] (h3) edge (4);
    \draw[MyGreen] [-latex] (h3) edge (5);
    \draw[MyGreen] [-latex] (h3) edge (6);
    \draw[MyYellow] [-latex] (h4) edge (4);
    \draw[MyYellow] [-latex] (h4) edge (5);
    \draw[MyYellow] [-latex] (h4) edge (6);
    \draw[MyYellow] [-latex] (h4) edge (7);
    \draw[MyYellow] [-latex] (h4) edge (8);
    \draw[MyYellow] [-latex] (h4) edge (9);
    \draw [-latex] (h5) edge (5);
    \draw [-latex] (h5) edge (6);
    \draw [-latex] (h5) edge (7);
    \draw [-latex] (h5) edge (8);
    \draw [-latex] (h5) edge (9);
\end{tikzpicture}
\caption{A graph where lower and  upper bound on the dimension do not coincide.}
\label{fig:different-bounds}
\end{center}
\end{figure}

Note that none of the graphs in Figure~\ref{fig:dimension} satisfies the condition in Corollary~\ref{cor:2-children-dim}. But the graph in Figure~\ref{fig:graph_5_2_4} satisfies the condition if we swap, for example, the label of nodes $v_3$ and $v_6$.

\begin{remark}
    One might be tempted to compare the condition in Corollary \ref{cor:2-children-dim} to the often employed ``$2$-pure-children'' condition, recall Remark \ref{rem:pure-children}. The $2$-pure children condition requires that each latent node $h \in \mathcal{H}$ has at least $2$ children that have no other parents than $h$. However, the condition in Corollary \ref{cor:2-children-dim} is equivalent to the requirement that, for each $i \in [m]$, the latent node $h_i$ has two pure children  in the subgraph obtained by deleting all latent nodes with an index smaller than $i$. Hence, the children of $h_i$ in the original graph are generally allowed to have more than one parent. For example, after relabeling, nodes $v_5$ and $v_6$ need to be children of $h_3$, but $h_1$ and $h_2$ could also be parents of $v_5$ or $v_6$. The condition only requires that all $h_j$ with $j>3$ are not parents of $v_5$ and $v_6$. Said differently, only $v_1$ and $v_2$ are \emph{pure} children of $h_1$ in the classical sense. Conversely, if there is no latent node that has two pure children in the classical sense, then the factor analysis graph does not satisfy the condition in Corollary \ref{cor:2-children-dim}.
\end{remark}

\section{Algebraic Invariants of Sparse Two-Factor Models} \label{sec:invariants}
We are interested in polynomial invariants that hold on a covariance matrix $\Sigma \in F(G)$, where $G$ is a sparse factor analysis graph. For any subset $F \subseteq \PD(p)$, the \emph{ideal of invariants} is defined as 
\[
I(F) = \{f \in \mathbb{R}[\sigma_{ij}, \ i \leq j]: f(\Sigma)=0 \text{ for all } \Sigma \in F\}.
\]
Our object of interest is the ideal of invariants of sparse factor analysis models. Since, for a symmetric positive definite matrix $\Sigma \in \mathbb{R}^{p \times p}$, membership in $F(G)$ only depends on the off-diagonal entries of $\Sigma$, we can regard the ideal of invariants of $F(G)$, i.e., $I(G):=I(F(G))$ as an ideal in the subring $\mathbb{R}[\sigma_{ij},\ i < j]$. It is our goal to find a finite set of polynomials that generate $I(G)$. If a factor analysis graph has an edge to every observed node, the model is equivalent to a \emph{full} factor analysis model. In the case of one or two latent nodes, the ideal of invariants is then completely understood, see \cite[Theorem 16]{AlgebraicFactorAnalysis} and \cite{SecondHypersimplex}. However, finding a minimal set of generators or a Gröbner basis for the full factor analysis model with three latent nodes is, to the best of our knowledge, still an open problem. \\

First, we consider the special case where the children sets $\ch(h_i)$ of a factor analysis graph $G=(V \cup \mathcal{H}, D)$ only intersect in at most one node.  Our next proposition reveals that in this case the ideal of invariants is a sum of ideals obtained from induced subgraphs corresponding to full \emph{one}-factor analysis models. 
Note that the  ideal of the full one-factor analysis model is toric, that is, it is prime and binomial; see Theorem \ref{thm: Gröbner basis of the full one-factor model}. In particular, it is the toric ideal of the edge subring of the complete graph on the observable node set $V$.

\begin{proposition}
\label{prop:small_intersections}
Let $G=(V \cup \mathcal{H}, D)$ be a factor analysis graph such that for any disjoint pair $(h_i, h_j) \in \mathcal{H} \times \mathcal{H} \backslash \{(h_1, h_m)\}$  of latent nodes,
we have that $\lvert \ch(h_i) \cap \ch(h_j) \rvert \leq 1$ and $\lvert \ch(h_1) \cap \ch(h_m) \rvert =0$. Let $G_i$ be the induced subgraph $G[\{h_i\} \cup \ch(h_i)] \subseteq G$ on the  vertex set $\{h_i\} \cup \ch(h_i)$, for $i \in [m]$.
Then we obtain that 
$
I(G) = I(G_1) + \dots + I(G_m) + \langle \sigma_{ij}: \pa(i) \cap \pa(j) = \emptyset \rangle 
$ and it is toric.

\end{proposition}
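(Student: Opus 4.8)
The plan is to show that the hypotheses force every pair of observed nodes to have at most one joint parent, which collapses the parametrization of the off-diagonal entries to a \emph{monomial} map; the proposition then becomes a statement about the kernel of that map. Concretely, if $h_i,h_j \in \jpa(\{u,v\})$ with $i \neq j$, then $u,v \in \ch(h_i)\cap\ch(h_j)$, so $|\ch(h_i)\cap\ch(h_j)| \geq 2$, contradicting the assumption (for the excluded pair $(h_1,h_m)$ the intersection is even empty). Hence $|\jpa(\{u,v\})| \leq 1$ for every pair, and the parametrization reads $\sigma_{uv} = \lambda_{uh}\lambda_{vh}$ when $\jpa(\{u,v\}) = \{h\}$ and $\sigma_{uv} = 0$ when $\jpa(\{u,v\}) = \emptyset$. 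I would encode this as a ring homomorphism $\phi\colon \mathbb{R}[\sigma_{uv}] \to \mathbb{R}[\lambda_{vh}]$ sending each $\sigma_{uv}$ to the corresponding monomial or to $0$. Since the off-diagonal entries do not involve $\Omega$ and $\Lambda$ ranges over all of $\mathbb{R}^D$, a polynomial $f$ lies in $I(G)$ if and only if $\phi(f)$ vanishes identically on $\mathbb{R}^D$, i.e.\ if and only if $\phi(f)=0$ as a polynomial (as $\mathbb{R}$ is infinite); thus $I(G) = \ker\phi$. Because $\phi$ is a monomial map, $\mathbb{R}[\sigma_{uv}]/\ker\phi \cong \im\phi$ embeds into a polynomial ring and is a domain, so $\ker\phi$ is prime and, being the kernel of a monomial map, toric; the precise generators will drop out below.

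Next I would record the combinatorial bookkeeping that makes $\ker\phi$ split. Writing $E_0$ for the pairs with $\jpa = \emptyset$ and $E_i$ for the pairs with $\jpa = \{h_i\}$, the at-most-one-parent property gives a partition $C(V,2) = E_0 \sqcup E_1 \sqcup \dots \sqcup E_m$. Moreover, using the intersection hypothesis once more, a pair $\{u,v\} \subseteq \ch(h_i)$ cannot acquire any second joint parent, so $E_i = C(\ch(h_i),2)$ is exactly the edge set of the complete graph on $\ch(h_i)$, and $\phi$ maps the variables indexed by $E_i$ into the polynomial ring in the parameter block $P_i := \{\lambda_{wh_i} : w \in \ch(h_i)\}$. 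The crucial point is that the blocks $P_1,\dots,P_m$ are pairwise disjoint (they differ in the latent index), so the exponent matrix of $\phi$ is block diagonal.

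Finally I would assemble the decomposition from this block structure. Factoring the source ring as $\mathbb{R}[\sigma_{uv} : \{u,v\}\in E_0] \otimes \bigotimes_{i=1}^{m} \mathbb{R}[\sigma_{uv} : \{u,v\}\in E_i]$, the map $\phi$ is zero on the first factor and, on the $i$-th remaining factor, is the comorphism $\phi_i$ of the full one-factor model $G_i = G[\{h_i\}\cup\ch(h_i)]$. By the standard description of the kernel of a tensor product of surjective algebra maps — equivalently, because a binomial lies in $\ker\phi$ only if it balances separately on each disjoint parameter block — one obtains $\ker\phi = \langle \sigma_{uv} : \{u,v\}\in E_0\rangle + \sum_{i=1}^{m}\ker\phi_i$. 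Identifying $\ker\phi_i = I(G_i)$, the tetrad ideal of the edge subring of $K_{\ch(h_i)}$ (cf.\ Theorem~\ref{thm: Gröbner basis of the full one-factor model}), and noting $E_0 = \{\{i,j\} : \pa(i)\cap\pa(j)=\emptyset\}$, yields exactly the claimed identity, with $\ker\phi$ generated by the monomials from $E_0$ and the binomials from the $I(G_i)$. The step I expect to require the most care is the block decomposition of $\ker\phi$: one must verify that no relation mixes variables attached to different latent nodes, which is precisely where disjointness of the blocks $P_i$ — itself a consequence of the at-most-one-joint-parent property — is used, and one must confirm that the induced-subgraph ideal $I(G_i)$ really coincides with the toric ideal $\ker\phi_i$ rather than a strictly larger ideal.
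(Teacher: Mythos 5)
Your proof is correct and fills in details that the paper's own proof only sketches. Both arguments rest on the same starting point: the intersection hypotheses force $|\jpa(\{u,v\})|\le 1$ for every pair of observed nodes, so the off-diagonal parametrization is a monomial map $\phi$, $I(G)=\ker\phi$ is prime and binomial, and the variables sent to $0$ account for the generators $\sigma_{ij}$ with $\pa(i)\cap\pa(j)=\emptyset$. Where you genuinely diverge is in justifying the decomposition $I(G)=I(G_1)+\dots+I(G_m)+\langle\sigma_{ij}:\pa(i)\cap\pa(j)=\emptyset\rangle$: the paper asserts that $G$ is a polytree or a union of polytrees and defers to an analogous decomposition result from the literature on third-order moment models, whereas you derive the splitting directly from the facts that the pairs with joint parent $h_i$ are exactly $C(\ch(h_i),2)$ and that the parameter blocks $\{\lambda_{wh_i}:w\in\ch(h_i)\}$ are pairwise disjoint, so $\phi$ is a tensor product of the block maps $\phi_i$ and its kernel is the sum of the extended ideals $\ker\phi_i$ plus the monomial ideal of killed variables. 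Your route is more elementary and self-contained, and it has the added virtue of not leaning on the polytree observation, which is not literally available here: once $m\ge 4$ the hypotheses permit three latent nodes to pairwise share distinct single children, producing a $6$-cycle in the underlying undirected graph, so it is really the block-diagonality of the exponent matrix --- exactly what you isolate --- that makes the ideal split. The two verifications you flag as delicate (that no relation mixes variables from different blocks, and that $I(G_i)=\ker\phi_i$) are the right ones, and both go through as you indicate, the latter because $I(G_i)$ is again the kernel of the corresponding monomial map, i.e.\ the tetrad ideal of Theorem~\ref{thm: Gröbner basis of the full one-factor model}.
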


\begin{proof}
The off-diagonal entries of the parametrization $\tau_G$ given in \eqref{eq: parametrization} are monomial and $\sigma_{ij} = 0$, if  $\pa(i) \cap \pa(j) = \emptyset$ for $i \neq j$. Moreover, the ideals of invariants of submatrices of $\Sigma$ that correspond to the covariance models of induced subgraphs $G[\{h_i\} \cup \ch(h_i)] \subseteq G$ are toric \cite[Theorem 16]{AlgebraicFactorAnalysis}.  Finally, since G is a polytree (a directed acyclic graph whose underlying undirected graph is a tree) or a union of polytrees, the ideal is toric. 
This fact may be proven in the same way as \cite[Proposition 5.4]{ThirdOrderMoments}
but intersecting only with variables corresponding to the covariances.
\end{proof}

If $\lvert \ch(h_i) \cap \ch(h_j) \rvert \geq 2$ for some disjoint pairs $(h_i, h_j) \in \mathcal{H} \times \mathcal{H}$, the ideal $I(G)$ may not be toric. In Section~\ref{subsec:variety} and \ref{subsubsection: sparse two}, we explore this case for two-factor models.

\subsection{Variety} \label{subsec:variety} In this section, we focus on $|\mathcal{H}|=2$, that is, factor analysis graphs $G$ with two latent nodes. We propose generators of an ideal such that the variety corresponding to the ideal is the smallest variety that contains the model. Recall that for any ideal $I \subseteq \mathbb{R}[x_1, \ldots, x_n]$, the corresponding variety (over the complex numbers) is defined as $\mathbb{V}(I) = \{x \in \mathbb{C}^n: f(x)=0 \text{ for all } f \in I\}$.
We say that a set of latent nodes $H \subseteq \mathcal{H}$ \emph{separates} two sets of observed nodes $A,B \subseteq V$ if $\pa(A) \cap \pa(B) \subseteq H$, where we use the notation $\pa(A)=\bigcup_{a \in A}\pa(a)$. Importantly, the set $H$ might also be empty, that is, $A$ and $B$ are separated given the empty set if they do not have a joint parent. We denote the submatrix of $\Sigma$ given by the rows $A$ and columns $B$ by $\Sigma_{A,B}$.

\begin{definition}
Let $G=(V \cup \mathcal{H}, D)$ be a factor analysis graph. The ideal $M_{\leq 1}(G)  \subseteq \mathbb{R}[\sigma_{ij},\ i \leq j]$ is generated by all minors $\det(\Sigma_{A,B})$, where $A,B \subseteq V$ are two sets of observed nodes with cardinality $|A| = |B| \leq 2$ and there is $H \subseteq \mathcal{H}$ with $|H| < |A|$ such that $H$ separates $A$ and $B$.
\end{definition}

In other words, the ideal $M_{\leq 1}(G)$ is generated by minors $\det(\Sigma_{A,B})$ such that $A$ and $B$ are separated by at most one latent factor.
Note that $A$ and $B$ are not necessarily disjoint. 

\begin{example}
Consider the graph from Figure~\ref{fig:graph_5_2_4}. Let $A=\{1\}$, $B=\{7\}$ and $H=\emptyset$. Since the nodes $1$ and $7$ are separated by the empty set, i.e., $\pa(1) \cap \pa(7) = \emptyset$, we have that the monomial $\sigma_{17}$ is in the generating set of the ideal  $M_{\leq 1}(G)$. The sets $A=\{1,2\}$ and $B=\{4,5\}$ are separated by $H=\{h_1\}$ since $\pa(A) \cap \pa(B) = \{h_1\}.$ Thus, the minor $\sigma_{14} \sigma_{25} - \sigma_{24} \sigma_{15}$ is a generator of $M_{\leq 1}(G)$. On the other hand, the sets $A=\{1,4\}$ and $B=\{2,5\}$ can only be separated by $H=\{h_1, h_2\}$, that is, we need at least two latent factors for separation. This yields that the minor $\sigma_{12} \sigma_{45} - \sigma_{24} \sigma_{15}$ is \emph{not} in the generating set of $M_{\leq 1}(G)$.
\end{example}

Let $M_{p,m} \subseteq \mathbb{R}[\sigma_{ij},\ i \leq j]$ be the ideal that is generated by all $(m+1) \times (m+1)$-minors of a symmetric matrix $\Sigma \in \mathbb{R}^{p \times p}$. 

\begin{theorem} \label{thm:variety-of-model}
Let $G=(V \cup \mathcal{H}, D)$ be a factor analysis graph with $|\mathcal{H}|=2$ latent factors. Then 
\[
    \mathbb{V}(I(G)) = \mathbb{V}((M_{p,2} +  M_{\leq 1}(G)) \cap \mathbb{R}[\sigma_{ij},\ i < j]),
\]
where the varieties $\mathbb{V}(\cdot)$  are understood over the field $\mathbb{C}$ of complex numbers.
\end{theorem}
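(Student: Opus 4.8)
The plan is to pass to the off-diagonal coordinate projection and prove the equality of two Zariski closures by a two-sided inclusion. Write $\pi\colon \mathrm{Sym}_p(\mathbb{C}) \to \mathbb{C}^{\binom{p}{2}}$ for the map forgetting the diagonal, and set $W = \{\Lambda\Lambda^\top : \Lambda \in \mathbb{C}^D\}$ together with $X = \mathbb{V}(M_{p,2} + M_{\leq 1}(G)) \subseteq \mathrm{Sym}_p(\mathbb{C})$. Since the parametrization $\tau_G$ is polynomial with $\Omega$ entering only the diagonal, and since $\mathbb{R}^D$ is Zariski dense in $\mathbb{C}^D$, the complex Zariski closure of the real model satisfies $\mathbb{V}(I(G)) = \overline{\pi(W)}$. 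By the closure theorem for elimination over the algebraically closed field $\mathbb{C}$, $\mathbb{V}(J) = \overline{\pi(X)}$ for $J = (M_{p,2}+M_{\leq 1}(G)) \cap \mathbb{R}[\sigma_{ij},\ i<j]$. Thus it suffices to prove $\overline{\pi(W)} = \overline{\pi(X)}$.

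For the inclusion $\mathbb{V}(I(G)) \subseteq \mathbb{V}(J)$ I would check directly that $W \subseteq X$. A matrix $\Lambda\Lambda^\top$ has rank at most $2$, so it lies in $\mathbb{V}(M_{p,2})$; and for any generator $\det(\Sigma_{A,B})$ of $M_{\leq 1}(G)$, where $A,B$ are separated by $H$ with $|H| < |A| \leq 2$, every entry $(\Lambda\Lambda^\top)_{ab}$ with $a\in A$, $b\in B$ equals $\sum_{h\in \pa(a)\cap\pa(b)}\lambda_{ah}\lambda_{bh}$, a sum over a subset of $H$. Hence the block $(\Lambda\Lambda^\top)_{A,B}$ equals $\sum_{h\in H}\lambda_{A,h}\lambda_{B,h}^\top$ and has rank at most $|H|<|A|$, so its maximal minor vanishes. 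Therefore $\pi(W)\subseteq\pi(X)$ and $\overline{\pi(W)}\subseteq\overline{\pi(X)}$.

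The crux is the reverse inclusion, for which I would reconstruct, generically, every $M\in X$ as $\Lambda\Lambda^\top$ with $\Lambda\in\mathbb{C}^D$. Partition the observed nodes into the pure children $P_1=\ch(h_1)\setminus\ch(h_2)$, $P_2=\ch(h_2)\setminus\ch(h_1)$, the shared children $Q=\ch(h_1)\cap\ch(h_2)$, and the parentless nodes. I would first read off from the generators of $M_{\leq1}(G)$ that any $M\in X$ satisfies $\rk M\leq 2$; that $M_{P_1,P_2}=0$ and $M_{v,\cdot}=0$ for parentless $v$ (the monomial generators from $\emptyset$-separation); and that $\rk M_{P_1,V}\leq 1$ and $\rk M_{P_2,V}\leq 1$, since any two rows indexed by $P_1$ are separated from all columns by the single node $h_1$, and symmetrically for $P_2$. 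Writing a symmetric factorization $M=uu^\top+vv^\top$ with $u,v\in\mathbb{C}^V$, the rank-one conditions force the pairs $(u_i,v_i)$ for $i\in P_1$ to be proportional to a fixed direction, and likewise on $P_2$, while $M_{P_1,P_2}=0$ forces these two directions to be orthogonal for the standard bilinear form $x^\top y$. Changing basis in the two-dimensional row space accordingly, I would rewrite $M=pp^\top+qq^\top$ with $p$ supported on $\ch(h_1)=P_1\cup Q$ and $q$ supported on $\ch(h_2)=P_2\cup Q$; taking the columns of $\Lambda$ to be $p$ and $q$ then gives $\Lambda\in\mathbb{C}^D$ with $\Lambda\Lambda^\top=M$, so $\pi(M)\in\pi(W)$.

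To upgrade this pointwise reconstruction to $\overline{\pi(X)}\subseteq\overline{\pi(W)}$, I would argue that it succeeds on a dense constructible subset $X^\circ\subseteq X$ meeting every irreducible component, whence $\pi(X)\subseteq\overline{\pi(X^\circ)}\subseteq\overline{\pi(W)}$ by continuity of $\pi$. The hard part will be the degenerate locus where the reconstruction breaks down over $\mathbb{C}$: when the restriction of the bilinear form $x^\top y$ to the row space is degenerate (isotropic vectors), when the two factor directions become parallel or isotropic, or when $\rk M_{P_i,V}$ drops. There the orthogonalization step is unavailable, and I expect the remaining technical effort to lie in showing such points still belong to $\overline{\pi(W)}$ — either by exhibiting explicit one-parameter families $\Lambda^{(t)}\in\mathbb{C}^D$ with $\pi(\Lambda^{(t)}(\Lambda^{(t)})^\top)\to\pi(M)$, or by checking that these strata lie in the closure of the reconstructible locus and hence contribute nothing after taking $\overline{\,\cdot\,}$.
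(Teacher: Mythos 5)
Your overall strategy coincides with the paper's: both reduce the theorem to showing that $\mathbb{V}(M_{p,2}+M_{\leq 1}(G))$ equals the Zariski closure of the Gram locus $\{\Lambda\Lambda^{\top}:\Lambda\in\mathbb{C}^D\}$, and then invoke the elimination/closure theorem for the projection forgetting the diagonal. Your forward inclusion is fine (the paper cites trek separation where you compute block ranks directly), and reducing the reverse inclusion to an explicit factorization is the right idea. You diverge only in how the factorization is carried out: you write $M=uu^{\top}+vv^{\top}$ and try to rotate by a complex orthogonal matrix so that the two columns become supported on $\ch(h_1)$ and $\ch(h_2)$, whereas the paper peels off rank-one factors sequentially, writing $\Sigma=\Lambda_1\Sigma_1\Lambda_1^{\top}$ with $\Lambda_1$ built from a single node $v\in V_1$ with $\sigma_{vv}\neq 0$, verifying the product entrywise from the $2\times 2$ minors in $M_{\leq 1}(G)$, and then factoring the rank-$\leq 1$ residual supported on $V_2\cup V_3$.

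The genuine gap is your final step. Your reconstruction works only on a locus $X^{\circ}$ where the form $x^{\top}y$ is nondegenerate on the relevant directions and the ranks $\rk M_{P_i,V}$ do not drop, and you then assert, without argument, that $X^{\circ}$ is dense in $X$ and meets every irreducible component. That is precisely what must be proved: a priori $X$ could have a component contained entirely in the degenerate locus (for instance where the direction $(\alpha,\beta)$ on $P_1$ is isotropic, which is equivalent to the vanishing of the whole diagonal block $M_{P_1,P_1}$), and then no closure argument over $X^{\circ}$ reaches those points; the alternative of exhibiting limiting families $\Lambda^{(t)}$ is also left undone. It is worth noting that your degenerate locus essentially coincides with the one the paper excludes via its ``without loss of generality, $\sigma_{vv}\neq 0$ for some $v\in V_1$'' (isotropy of the $P_1$-direction forces $\sigma_{vv}=0$ on all of $V_1$), so the paper elides a similar issue; but your write-up defers strictly more — the entire treatment of isotropic and rank-deficient strata plus the density claim — and therefore cannot be considered a complete proof as it stands.
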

\begin{proof}

Let $\mathcal{L} = \{\Lambda \Lambda^{\top} \in \mathbb{R}^{p \times p}: \Lambda \in \mathbb{R}^D\}$. We first prove that $\mathbb{V}(M_{p,2} +  M_{\leq 1}(G))$ is equal to the Zariski closure $\overline{\mathcal{L}}$ in $\mathbb{C}^{p \times p}$. For the inclusion $\overline{\mathcal{L}} \subseteq \mathbb{V}(M_{p,2} + M_{\leq 1}(G))$, consider a matrix $\Sigma \in \mathcal{L}$. Then $\Sigma$ is symmetric and $\Sigma \in \mathbb{V}(M_{p,2})$. Moreover one can check that $\Sigma \in \mathbb{V}(M_{\leq 1}(G))$ by applying trek separation \cite{sullivant2010trek}. Thus, $\mathcal{L} \subseteq \mathbb{V}(M_{p,2} +  M_{\leq 1}(G))$ and since the variety $\mathbb{V}(M_{p,2} +  M_{\leq 1}(G))$ is Zariski closed, we obtain that $\overline{\mathcal{L}} \subseteq \mathbb{V}(M_{p,2} + M_{\leq 1}(G))$. 

For the other direction, assume that $\Sigma \in \mathbb{V}(M_{p,2} +  M_{\leq 1}(G))$. We explicitly construct a matrix $\Lambda \in \mathbb{C}^D$ such that $\Sigma=\Lambda\Lambda^{\top}$. 
We assume that there is no node $v \in V$ such that $\pa(v) = \emptyset$, since this case is trivial where the row of $\Lambda$ that is indexed by $v$ is zero. Let $\mathcal{H}=\{h_1,h_2\}$ and $V=V_1 \dot\cup V_2 \dot\cup V_3$ be a partition of the observed nodes $V$ into three subsets such that 
$V_1 = \ch(h_1) \setminus \ch(h_2)$, $V_3 = \ch(h_2) \setminus \ch(h_1)$ and $V_2= V \setminus (V_1 \cup V_3)$. 
Without loss of generality we assume that there exists a node $v \in V_1$ such that $\sigma_{vv}\neq0$. 
We fix this node $v$ and define the matrices
    \[
    \Lambda_1= \begin{blockarray}{cc}
    \begin{block}{(c|c)}
    \mathbf{x} & 0 \\
    \BAhline
    \mathbf{y}  & I_{|V_2|+|V_3|} \\
    \end{block}
    \end{blockarray}
    \in \mathbb{C}^{p \times (1+|V_2|+|V_3|)}
    \quad  \text{and} \quad
    \Sigma_1= \begin{blockarray}{cc}
    \begin{block}{(c|c)}
    1 & 0 \\
    \BAhline
   0 & A \\
    \end{block}
    \end{blockarray}
    \in \mathbb{C}^{(1+|V_2|+|V_3|)\times (1+|V_2|+|V_3|)}.
    \]
The vector $\mathbf{x}  \in \mathbb{C}^{V_1}$ is defined by $\mathbf{x}_w=\sigma_{vw}/\sqrt{\sigma_{vv}}$ for $w \neq v$ and by $\mathbf{x}_v=\sqrt{\sigma_{vv}}$ for the node $v$. The vector $\mathbf{y}  \in \mathbb{C}^{V_2 \cup V_3}$ is defined by $\mathbf{y} _w=\sigma_{vw}/\sqrt{\sigma_{vv}}$ if $w \in V_2$ and by $\mathbf{y} _w=0$ else. Finally, the symmetric matrix $A=(a_{uw}) \in \mathbb{C}^{V_2 \cup V_3, V_2 \cup V_3}$ is defined as $A=\Sigma_{V_2 \cup V_3, V_2 \cup V_3} - \mathbf{y}  \mathbf{y} ^{\top}$. 

We prove next that $\Lambda_1\Sigma_1\Lambda_1^{\top}=\Sigma$. The essential step is to exploit that $\Sigma \in  M_{\leq 1}(G)$, which implies the three properties:
\begin{equation} \label{eq:sigma-props}
    \Sigma_{{V_1},{V_3}}=0, \quad \rk(\Sigma_{V_1, V_1 \cup V_2})\leq1, \quad \text{and \quad} \rk(\Sigma_{V_2 \cup V_3, V_3})\leq1.
\end{equation}
It holds that $[\Lambda_1\Sigma_1\Lambda_1^{\top}]_{{V_1},{V_3}}=\Sigma_{{V_1},{V_3}}$, since $[\Lambda_1\Sigma_1\Lambda_1^{\top}]_{{V_1},{V_3}}= \mathbf{x}  \mathbf{y}_{V_3}^{\top}=0$.  Next, we show that $[\Lambda_1\Sigma_1\Lambda_1^{\top}]_{V_1,V_1 \cup V_2} = \Sigma_{V_1,V_1 \cup V_2}$. For any node $w \in V_1 \setminus \{v\}$, we have that $[\Lambda_1\Sigma_1\Lambda_1^{\top}]_{vw}=\mathbf{x}_w \mathbf{x}_v = (\sigma_{vw}/\sqrt{\sigma_{vv}}) \sqrt{\sigma_{vv}} = \sigma_{vw}$. Let $k,w \in V_1\setminus \{v\}$. By \eqref{eq:sigma-props}, it holds that $\det(\Sigma_{\{v,k\},\{v,w\}})=0$, i.e.\ $\sigma_{vv}\sigma_{kw}=\sigma_{vk}\sigma_{vw}$. Hence, 
\[
[\Lambda_1\Sigma_1\Lambda_1^{\top}]_{kw}=\mathbf{x}_k \mathbf{x}_w = \frac{\sigma_{vk} \sigma_{vw}}{\sigma_{vv}} = \sigma_{kw}.
\]
If $w$ is an element of $V_2$ instead of $V_1$, the conclusion follows similarly by replacing $\mathbf{x}_w$ with $\mathbf{y}_w$. We finally observe that  the equality $[\Lambda_1\Sigma_1\Lambda_1^{\top}]_{V_2 \cup V_3, V_2 \cup V_3} = \Sigma_{V_2 \cup V_3, V_2 \cup V_3}$ follows directly from the definitions of $\Lambda_1$ and $\Sigma_1$.

Now, we return to proving that $\Sigma = \Lambda_1 \Sigma_1 \Lambda_1^{\top}\in \overline{\mathcal{L}}$. The matrix $\Lambda_1$ has full rank equal to $1+|V_2|+|V_3|$ and the rank of $\Sigma$ is at most $2$. By Sylvester's rank inequality, this implies $\rk(\Sigma_1)\leq2$. In particular, we have that $\rk(A)\leq1$. 
Without loss of generality we may assume that there is a node $u \in V_2 \cup V_3$ such that that $a_{uu}\neq 0$. We fix this node $u$ and define the matrix 
\[
    \Lambda_2 = \begin{blockarray}{cc}
    \begin{block}{(c|c)}
    1 & 0 \\
    \BAhline
    0  & \mathbf{z} \\
    \end{block}
    \end{blockarray},
\]
where $\mathbf{z} \in \mathbb{C}^{V_2 \cup V_3}$ is defined by $\mathbf{z}_w = a_{wu}/\sqrt{a_{uu}}$ for $w \neq u$ and by $\mathbf{z}_u = \sqrt{a_{uu}}$ for the node $u$. Using similar arguments as above, it is easy to see that $\Sigma_1 = \Lambda_2 \Lambda_2^{\top}$. Finally, define $\Lambda=\Lambda_1 \Lambda_2$ and observe that
\[
    \Lambda = \begin{blockarray}{cc}
    \begin{block}{(c|c)}
    \mathbf{x} & 0 \\
    \BAhline
    \mathbf{y}_{V_2}  & \mathbf{z}_{V_2} \\
    \BAhline
    0  & \mathbf{z}_{V_3} \\
    \end{block}
    \end{blockarray} \in \mathbb{C}^D.
\]
This shows that $\Sigma \in \overline{\mathcal{L}}$ since
$
    \Lambda \Lambda^{\top} = \Lambda_1 \Lambda_2 \Lambda_2^{\top} \Lambda_1^{\top} = \Lambda_1 \Sigma_1 \Lambda_1^{\top} = \Sigma.
$

We now prove the statement of the theorem. Consider the projection $\pi$ of the space of symmetric $p \times p$ matrices onto the space of the off-diagonal entries. We have that
\[
    I(G)=I(F(G))=I(\pi(F(G)))=I(\overline{\pi(F(G))})=I(\overline{\pi(\mathcal{L})}),
\]
where the second equality follows from the fact that membership in $I(F(G))$ only depends on the off-diagonal entries, also see \cite{brouwer2011}. Since the Zariski closure of the projection of an arbitrary set is equal to the Zariski closure of the projection of the Zariski closure of the set, it follows that $\mathbb{V}(I(G)) = \overline{\pi{(\overline{\mathcal{L}})}}$. 
Consequently, we have that $\pi{(\overline{\mathcal{L}})} = \pi(\mathbb{V}(M_{p,2} +  M_{\leq 1}(G)))$ and by \cite[§4.4, Theorem 4]{Cox97}, the Zariski closure of the projection $\pi(\mathbb{V}(M_{p,2} +  M_{\leq 1}(G)))$ is $\mathbb{V}((M_{p,2} +  M_{\leq 1}(G)) \cap \mathbb{R}[\sigma_{ij},\ i < j])$.
\end{proof}

It was shown in \cite{brouwer2011} that the ideal $M_{p,2} \cap \mathbb{R}[\sigma_{ij},\ i < j]$ is generated by two types of generators: off-diagonal $3\times3$-minors and certain polynomials of degree $5$ known as {\emph{pentads}} \cite{kelley1935essential}. Thus, it is natural to conjecture that the ideal $(M_{p,2} +  M_{\leq 1}(G)) \cap \mathbb{R}[\sigma_{ij},\ i < j]$ is generated by off-diagonal $3\times3$-minors, pentads, and the off-diagonal $1\times1$ and $2\times2$-minors in $M_{\leq 1}(G)$; see Conjecture \ref{conj:generators}.

Theorem 3.4 implies that the ideal $(M_{p,2} +  M_{\leq 1}(G)) \cap \mathbb{R}[\sigma_{ij},\ i < j]$ is included in the ideal of invariants $I(G)$ we are interested in. In the next section, we combinatorially find a Gr\"obner basis of $I(G)$ for the special case where the two-factor analysis model has overlap two, that is, there are at most two observed nodes that have two latent parents and all other observed nodes have at most one latent parent. We obtain as Corollary \ref{cor:proposed-equals-vanishing} that we have indeed $I(G)=(M_{p,2} +  M_{\leq 1}(G)) \cap \mathbb{R}[\sigma_{ij},\ i < j]$, and that this ideal is generated by the concerned polynomials that vanish on the model.

Our readers are encouraged to use our code on \href{https://mathrepo.mis.mpg.de/sparse-factor-analysis/}{MathRepo} to experiment with Gröbner basis computations of $I(G)$. Gr\"obner bases for the \emph{full} factor analysis model with one and two latent nodes are given in \cite{de1995grobner} and \cite{SecondHypersimplex}. \looseness=-1

\begin{figure}[t]
\centering
    \begin{tikzpicture}
        \node[fill=lightgray] (h1) at (0,1) {$h_1$};

        \node[] (1) at (-2,0) {$v_1$};
        \node[] (2) at (-1,0) {$v_2$};
        \node[] (3) at (0,0) {$v_3$};
        \node[] (4) at (1,0) {$v_4$};
        \node[] (5) at (2,0) {$v_5$};
        \node[] (6) at (3,0) {$v_6$};
        \node[] (7) at (4,0) {$v_7$};
        
        \draw[MyBlue] [-latex] (h1) edge (1);
        \draw[MyBlue] [-latex] (h1) edge (2);
        \draw[MyBlue] [-latex] (h1) edge (3);
        \draw[MyBlue] [-latex] (h1) edge (4);
        \draw[MyBlue] [-latex] (h1) edge (5);
    \end{tikzpicture}\quad\quad
    \begin{tikzpicture}
        
        \node[fill=lightgray] (h2) at (2.5,-1) {$h_2$};
        
        \node[] (1) at (-2,0) {$v_1$};
        \node[] (2) at (-1,0) {$v_2$};
        \node[] (3) at (0,0) {$v_3$};
        \node[] (4) at (1,0) {$v_4$};
        \node[] (5) at (2,0) {$v_5$};
        \node[] (6) at (3,0) {$v_6$};
        \node[] (7) at (4,0) {$v_7$};

        \draw[MyRed] [-latex] (h2) edge (4);
        \draw[MyRed] [-latex] (h2) edge (5);
        \draw[MyRed] [-latex] (h2) edge (6);
        \draw[MyRed] [-latex] (h2) edge (7);
    \end{tikzpicture}    
  \caption{Two sparse one-factor analysis graphs where $\{4,5\}$ is the only pair of vertices such that its joint parents are the set of latent nodes $\mathcal{H} = \{h_1, h_2\}$. Identifying these two graphs via the observed nodes $V = \{v_1, \ldots, v_7\}$ yields the two-factor sparse analysis graph from Figure~\ref{fig:graph_5_2_4}.} 
\label{fig: separating of two one-factors}
\end{figure}
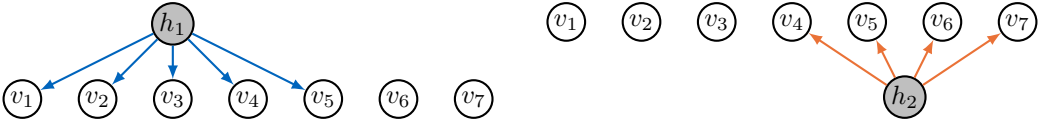

\subsection{Gröbner Basis} \label{subsec:groebner-basis}
In this section, we start by introducing a reduced Gröbner basis for sparse one-factor analysis models. We then use this to construct a Gröbner basis for the ideal of invariants for certain two-factor analysis models. This approach involves considering the join of sparse one-factor models with the "2-delightful" approach from \cite{combinatorialsecant, SecondHypersimplex}.

\subsubsection{Sparse one-factor models} We recall the definition for the toric ideal of the edge subring of a graph. Let $G$ be a simple undirected graph on $p$ vertices. We define the {\emph{edge ring}} associated to $G$ as
\[
\text{Edr}(G) := \mathbb{C}[t_it_j \mid \{i, j\} \in E(G)]
\] and consider the surjective ring homomorphism:
\begin{align*}
\begin{split}
\Phi_G \colon \mathbb{C}[\sigma_{ij} | 1 \leq i < j \leq p] &\longrightarrow \text{Edr}(G)\\
\sigma_{ij} &\mapsto \begin{cases}
    t_i t_j & \text{if } ij \in E(G), \\
    0  & \text{otherwise}.
\end{cases}
\end{split}
\end{align*}
The kernel of $\Phi_G$ is called the \emph{toric ideal (of the edge subring) of} $G$. If $G = K_p$ is the complete graph on $p$ vertices, then $\ker(\Phi_G)$ is the ideal $I_{p,1}$ of the full one-factor analysis model. This immediately follows by the parametrization (\ref{eq: parametrization}). This ideal is also called the \emph{ideal for the second hypersimplex}. 

For a Gröbner basis of $I_{p,1}$, we consider a circular embedding of the complete graph $K_p$ where vertices are presented as the $p$-th roots of unity in the complex plane. The edges of $K_p$ belong to the $\lfloor \frac{p}{2} \rfloor$ orbits under the action of dihedral group $D_p$ on the roots of unity. The $k$-th class of edges is the set of edges that are equivalent to the edge $1k$ for $k \in \{2, \ldots, \lceil \frac{p}{2}\rceil\}$. In other words, the edges that are closer to the boundary of the circular embedding correspond to larger variables in the block ordering.  
\begin{definition}
    A circular term order is any block term order such that $\sigma_{i_1 j_1} \succ\sigma_{i_2 j_2}$ whenever the edge $i_1 j_1$ is in a smaller class than the edge $i_2 j_2$.
\end{definition}
 
The Gröbner basis for the ideal for the second hypersimplex, or equivalently for $I_{p,1}$, with respect to any circular order is studied by De Loera, Sturmfels, and Thomas. 

\begin{theorem}[{\cite[Theorem 2.1]{de1995grobner}}]\label{thm: Gröbner basis of the full one-factor model}
The set of square-free quadratic binomials
\begin{equation}\label{eq: tetrads}
 \{\underline{\sigma_{ij} \sigma_{kl}} - \sigma_{ik} \sigma_{jl}, \underline{\sigma_{il}\sigma_{jk}} - \sigma_{ik} \sigma_{jl} \ | \ 1 \leq i < j < k < l \leq n \}
\end{equation} 
is a reduced Gröbner basis for the one-factor analysis model $I_{p,1}$ 
with respect to any circular term order. 
\end{theorem}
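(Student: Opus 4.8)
The plan is to work directly with the toric structure of $I_{p,1}=\ker(\Phi_{K_p})$. First I would set up the dictionary between monomials and edge-multisets: the monomial $\prod_{i<j}\sigma_{ij}^{a_{ij}}$ is sent by $\Phi_{K_p}$ to $\prod_i t_i^{d_i}$, where $d_i=\sum_{j}a_{ij}$ is the degree of vertex $i$ in the multigraph recorded by the exponents. Hence two monomials lie in the same fiber of $\Phi_{K_p}$, equivalently their difference lies in $I_{p,1}$, precisely when the associated edge-multisets have the same vertex-degree vector $(d_1,\dots,d_p)$. With this dictionary the containment of the tetrads in $I_{p,1}$ is immediate: for $i<j<k<l$ the three quadratic monomials $\sigma_{ij}\sigma_{kl}$, $\sigma_{ik}\sigma_{jl}$, $\sigma_{il}\sigma_{jk}$ encode the three perfect matchings of $\{i,j,k,l\}$, all of which assign degree one to each of the four vertices, so each listed binomial maps to zero.

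Next I would identify the initial terms. Placing $i<j<k<l$ as four points on the circle, the matching $\{i,k\},\{j,l\}$ is the crossing (non-planar) one, whereas $\{i,j\},\{k,l\}$ and $\{i,l\},\{j,k\}$ are the two planar matchings. The circular order is designed exactly so that, among the three matchings of four points on the circle, the two planar matchings dominate the crossing one; this dihedral-invariant fact is precisely what the underlining records, and it is the only place the circular structure is used. Consequently the crossing monomial $\sigma_{ik}\sigma_{jl}$ is the common, and smallest, trailing term, and a monomial is standard, i.e.\ divisible by no element of $\initial_\prec(\mathcal{G})$, if and only if its edge-multiset is \emph{pairwise crossing}; writing the edges as $a_sb_s$ with $a_s<b_s$, this means they can be listed with $a_1\le\cdots\le a_r$, $b_1\le\cdots\le b_r$ and $a_{s+1}<b_s$ for all $s$, i.e.\ the multiset is \emph{sorted}.

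The heart of the argument is to show that these sorted monomials form a transversal of the fibers of $\Phi_{K_p}$, one standard monomial per degree vector. By the standard criterion for toric Gröbner bases this is equivalent to $\mathcal{G}$ being a Gröbner basis for $\prec$. Existence of a sorted representative in each fiber follows from the sorting operator, which rearranges any edge-multiset into a pairwise-crossing one with the same degree vector; termination and monotonicity hold because each tetrad application replaces a planar pair by the crossing pair and hence strictly decreases the monomial in the circular order. The step I expect to be the main obstacle is \emph{uniqueness} of this normal form, i.e.\ local confluence of the rewriting, which amounts to checking that every $S$-polynomial of two tetrads whose leading terms share a variable reduces to zero modulo $\mathcal{G}$. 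Each such $S$-polynomial is a binomial of degree three supported on at most six vertices; the finitely many overlap patterns (classified by how the two four-element index sets meet in a single common variable) must be reduced by hand, using at each stage that the circular order forces the reduction to proceed strictly downward so that no pattern can loop. Once confluence is established, the sorted monomials are a $\mathbb{C}$-basis of $\mathbb{C}[\sigma_{ij}]/I_{p,1}$, certifying that $\mathcal{G}$ is a Gröbner basis.

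Finally I would record reducedness, which is now essentially free. The leading terms are pairwise distinct squarefree quadratics, so none divides another; and every trailing term $\sigma_{ik}\sigma_{jl}$ is a crossing, hence standard, monomial, so no leading term divides any tail. Thus $\mathcal{G}$ is already reduced, completing the proof.
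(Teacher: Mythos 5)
The paper does not prove this statement at all: it is imported verbatim from de Loera--Sturmfels--Thomas with a citation, so there is no internal proof to compare against. Your proposal is a genuine proof sketch, and it follows what is essentially the standard route for this result (the ``sorting'' argument for the second hypersimplex): identify fibers of $\Phi_{K_p}$ with degree vectors, observe that the three monomials of a tetrad are the three perfect matchings of $\{i,j,k,l\}$, characterize the monomials outside the candidate initial ideal as the pairwise-crossing (sorted) edge multisets, and show these form a transversal of the fibers, which by the standard criterion for toric ideals certifies the Gr\"obner basis property; reducedness then follows since the leading terms are distinct squarefree quadratics and every trailing term is itself sorted. This is all correct in outline.

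Two steps are asserted rather than carried out, and you should be aware of their status. First, the claim that under \emph{any} circular term order both noncrossing matchings dominate the crossing one is the only place the specific term order enters, and it genuinely requires an argument from the block structure (comparing the multisets of dihedral classes of the two edges in each matching); you flag it as ``what the order is designed for,'' which is true but is precisely the lemma to be proved. Second, for uniqueness of the sorted representative in each fiber you propose a Buchberger-style confluence check over all overlap patterns of two tetrads; this works but is unnecessarily heavy, since the sorted multiset is determined canonically by the degree vector (list each vertex $i$ with multiplicity $d_i$, sort the resulting $2r$ entries as $c_1\le\cdots\le c_{2r}$, and pair $c_s$ with $c_{s+r}$), which gives uniqueness directly and makes the transversal count immediate. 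A small slip: your sortedness condition $a_{s+1}<b_s$ should be $a_{s+1}\le b_s$, since edges sharing an endpoint count as crossing in the circular convention and monomials such as $\sigma_{12}\sigma_{23}$ are standard. None of these affects the overall correctness of the strategy.
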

These square-free quadratic binomials are known as \emph{tetrads} in the statistics literature. We first adapt this result to sparse one-factor analysis models. Let us consider a sparse one-factor analysis graph where $A \subseteq V$ is the set of children of the latent node and $B = [p]\backslash A$, i.e., the set of isolated vertices. We denote the ideal of invariants of a sparse one-factor analysis model as $I_{A, B ,1}$. The ideal $I_{A, B ,1}$ is the toric ideal of the complete graph $K_{|A|}$ on the vertex set $A$ with the set $B$ of isolated vertices. Thus, one needs to add $|A||B| + \binom{|B|}{2}$ degree-one monomials to the set in Theorem~\ref{thm: Gröbner basis of the full one-factor model} to form a reduced Gröbner basis for $I_{A,B,1}$. To simplify the next statement, we relabel the vertices of $A$ as $1,\ldots,|A|$ and the vertices of $B$ as $|A| +1, \ldots, p$.

\begin{proposition}\label{prop: Gröbner basis for sparse one-factor}
The set of degree-one monomials and tetrads
$$ \{\sigma_{ij} \ | \  i \in B \text{ or } j \in B \} \cup \{\underline{\sigma_{ij} \sigma_{kl}} - \sigma_{ik} \sigma_{jl}, \underline{\sigma_{il}\sigma_{jk}} - \sigma_{ik} \sigma_{jl} \ | \ 1 \leq i < j < k < l \leq |A|\}.$$
is a reduced Gröbner basis for the sparse one-factor analysis model $I_{A,B,1}$ with respect to any circular term order.
\end{proposition}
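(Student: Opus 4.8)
\noindent
The plan is to realize $I_{A,B,1}$ as the sum of the ideal generated by the degree-one monomials and a copy of the full one-factor ideal on the vertex set $A$, and then to verify the Gröbner basis property via Buchberger's criterion, exploiting that the two families of generators involve disjoint sets of variables.

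First I would record the underlying ideal decomposition. Writing $R = \mathbb{C}[\sigma_{ij} : 1 \leq i < j \leq p]$ and letting $J = \langle \sigma_{ij} : i \in B \text{ or } j \in B \rangle$ be the ideal generated by the degree-one monomials, the homomorphism $\Phi_G$ sends exactly the variables $\sigma_{ij}$ with an endpoint in $B$ to zero, so it factors through the quotient $R/J \cong \mathbb{C}[\sigma_{ij} : i,j \in A]$. The induced map on the quotient is precisely the defining homomorphism of the full one-factor model on $K_{|A|}$, whose kernel is $I_{|A|,1}$. Pulling back along the quotient map gives $I_{A,B,1} = J + I_{|A|,1}$, where $I_{|A|,1}$ is now regarded as an ideal of $R$ generated by the tetrads in the $A$-variables. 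By Theorem~\ref{thm: Gröbner basis of the full one-factor model}, these tetrads already form a reduced Gröbner basis of $I_{|A|,1}$ with respect to any circular term order.

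Next I would apply Buchberger's criterion to the union $\mathcal{G}$ of the tetrads and the degree-one monomials, checking three kinds of $S$-polynomials. For two tetrads, the $S$-polynomial reduces to zero using the tetrads alone, since they are a Gröbner basis of $I_{|A|,1}$, and enlarging the set to $\mathcal{G}$ only helps reduction. For a tetrad $g$ and a monomial $\sigma_{ij}$ with an endpoint in $B$, the leading term $\initial(g)$ is a product of two $A$-variables while $\sigma_{ij}$ is a single $B$-incident variable, so the two leading monomials are coprime; by the product (coprimality) criterion the $S$-polynomial reduces to zero modulo $\mathcal{G}$. The same coprimality argument disposes of any pair of distinct degree-one monomials, whose leading terms are distinct single variables. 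Hence $\mathcal{G}$ is a Gröbner basis of $I_{A,B,1}$ for every circular term order.

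Finally I would check that $\mathcal{G}$ is reduced. All leading coefficients are $1$. The tetrads satisfy the reduced condition among themselves by Theorem~\ref{thm: Gröbner basis of the full one-factor model}, and since they involve only $A$-variables, no monomial of a tetrad is divisible by a $B$-incident variable; conversely, the degree-one monomials cannot be divisible by the degree-two leading terms of the tetrads, nor by one another. The main obstacle is not any single hard step but rather the clean bookkeeping of the decomposition $I_{A,B,1} = J + I_{|A|,1}$ together with the observation that the two generator families live in disjoint variable sets, which is exactly what makes the coprimality criterion clear away all the cross $S$-polynomials.
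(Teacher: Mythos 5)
Your proof is correct and follows the same route the paper intends: the paper states this proposition without a formal proof, justifying it only by the preceding remark that $I_{A,B,1}$ is the toric ideal of $K_{|A|}$ together with the isolated vertices $B$, so that one merely adds the $|A||B|+\binom{|B|}{2}$ degree-one monomials to the Gröbner basis of Theorem~\ref{thm: Gröbner basis of the full one-factor model}. Your decomposition $I_{A,B,1}=J+I_{|A|,1}$ and the coprimality argument disposing of the cross $S$-pairs supply exactly the details the paper leaves implicit, and the reducedness check is likewise correct.
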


\begin{figure}[ht]

\begin{tikzpicture}[scale=2]
  \graph [nodes={circle, draw, inner sep=1pt, minimum size=1em}, clockwise, radius=1.3cm, n=5] {
    1/"1",
    2/"2",
    3/"3",
    4/"4",
    5/"5"
  };
  \node[circle, draw,inner sep=1pt, minimum size=1em] (16) at (1.5,0.4) {6};

  \node[circle, draw,inner sep=1pt, minimum size=1em] (67) at (1.5,0) {7};

  \draw[very thick, MyBlue] (1) -- (2);
  \draw (1) -- (3);
  \draw[very thick, MyRed]  (1) -- (4);
  \draw[very thick, MyGreen] (1) -- (5);
  \draw (2) -- (3);
  \draw[very thick, MyGreen] (2) -- (4);
  \draw[very thick, MyRed] (2) -- (5);
  \draw (3) -- (4);  
  \draw (3) -- (5);
  \draw[very thick, MyBlue] (4) -- (5);
  \end{tikzpicture}
\quad\quad \vline \quad \quad \begin{tikzpicture}[scale=2]
  \graph [nodes={circle, draw, inner sep=1pt, minimum size=1em}, clockwise, radius=1.3cm, n=10] {
    34/"34",
    35/"35",
    45/"45",
    12/"12",
    13/"13",
    14/"14",
    15/"15",
    23/"23",
    24/"24",
    25/"25"
  };
  \node[circle, draw,inner sep=1pt, minimum size=1em] (16) at (1.5,0.55) {16}; 

\node[draw=none,inner sep=1pt, minimum size=1em,scale=2] (16) at (1.5,0.1) {$\vdots$}; 

  \node[circle, draw,inner sep=1pt, minimum size=1em] (67) at (1.5,-0.5) {67}; 

  \draw (12) -- (34);
  \draw (12) -- (35);
  \draw (12) -- (45);
  \draw (13) -- (45);
  \draw (14) -- (23);
  \draw (12) -- (34);
  \draw (15) -- (23);
  \draw (15) -- (24);  
  \draw (15) -- (34);
  \draw (23) -- (45);
  \draw (25) -- (34);
  \end{tikzpicture}

\caption{The ideal of the sparse one-factor analysis model associated to the left graph in Figure~\ref{fig: separating of two one-factors} is the toric ideal of the graph on the left-hand side consisting of the complete graph $K_{|A|}$ and $|B|=2$ isolated vertices. The right-hand side pictures the corresponding initial ideal graph $G_{A,B}$ with 11 isolated vertices, where $A = \{1,2,3,4,5\}$ and $B = \{ 6,7\}$.}
\label{fig: complete graph with isolated vertices initial ideal graph}
\end{figure}
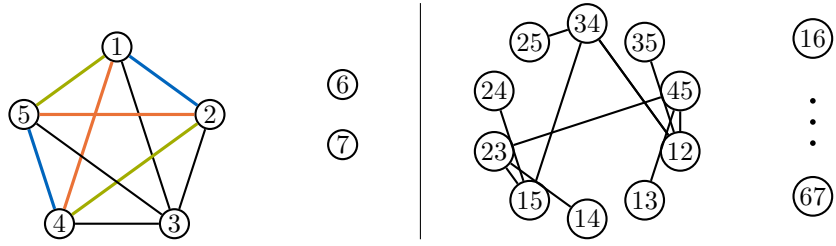
\begin{example}\label{ex: toric edge ideal of the complete graph with isolated vertices}
For the left model in Figure~\ref{fig: separating of two one-factors} with $A=\{1,2,3,4,5\}$ and $B= \{6,7\}$, the ideal $I_{A,B,1}$ is the toric ideal of the graph depicted on the left of Figure~\ref{fig: complete graph with isolated vertices initial ideal graph}. There are $11$ degree-one monomials $$\{\sigma_{16}, \sigma_{17}, \sigma_{26}, \sigma_{27}, \sigma_{36}, \sigma_{37}, \sigma_{46}, \sigma_{47}, \sigma_{56}, \sigma_{57}, \sigma_{67}\}.$$ 
and $2\binom{|A|}{4} = 10$ tetrads which form a reduced Gröbner basis for $I_{A,B,1}$ with respect to any circular term order. An easy way to construct these tetrads is by looking at the subgraph induced by the vertex set $\{i,j,k,l\}$. We say that a pair of edges $ij$, $kl$ cross if the line segments in the
circular drawing intersect (also at the endpoints) in the circular embedding. The noncrossing pairs (blue and green) of edges correspond to the leading terms of the tetrad generators and the crossing edges (orange) correspond to the remaining monomial of the tetrad. For $\{1,2,4,5\}$, we obtain the tetrads $\underline{\color{MyBlue}\sigma_{12} \sigma_{45}} - {\color{MyRed}\sigma_{14} \sigma_{25}}$ and $\underline{\color{MyGreen}\sigma_{24} \sigma_{15}} - {\color{MyRed}\sigma_{14} \sigma_{25}}$.
\end{example}

\subsubsection{Sparse two-factor models}\label{subsubsection: sparse two} In the case of full-factor analysis, the ideal of invariants is the $m$-th secant ideal of $I_{p,1}$. However, in the case of sparse-factor analysis models, we need to consider the \emph{join} of the ideals of sparse one-factor analysis models; see, e.g.  \cite{combinatorialsecant, miller2005combinatorial} for a rigorous definition of secants and joins. We consider a sparse $k$-factor analysis graph as $k$ sparse one-factor analysis graphs that are identified at their observed nodes $V$. Alternatively, these sparse one-factor analysis graphs can be seen as the induced subgraphs $G[\{h_i\} \cup V]$, where $h_i \in \mathcal{H}$ for $i \in [m]$; see Figure~\ref{fig: separating of two one-factors}. \\

To construct a Gröbner basis for a sparse two-factor analysis model with respect to any circular term order, we follow an analogous ``2-delightful" strategy which was used for the full two-factor analysis models in \cite{SecondHypersimplex}. For this, we first need to describe the join of the initial ideals of sparse one-factor analysis models. The initial ideal $\initial_{\prec}(I_{\{A,B\},1})$ is generated by all noncrossing pairs in the circular embedding of the complete graph $K_{|A|}$ on the vertex set $A$ and the degree-one monomials from Proposition~\ref{prop: Gröbner basis for sparse one-factor}. Thus it is the monomial edge ideal of a certain graph with isolated vertices.

\begin{definition}\label{def: new graph}
We define the simple graph called the {\emph{initial ideal graph}} $G_{A,B}$ whose vertices are labeled as $ij$ where $\{i,j\} \in A \sqcup B = [p]$ and $\{ij, kl\} \in E(G_{A,B})$ whenever $(ij, kl)$ is a noncrossing pair in the circular embedding of the complete graph $K_{|A|}$.
\end{definition}

To avoid confusion, the edges of the initial ideal graph are denoted by $\{ \ , \ \}$, different from the edges of the complete graph $K_{|\ch(j_i)|}$ with isolated vertices $\ch(h_i)^{\complement}$. The following definition describes how identifying two sparse one-factor analysis graphs via the observed nodes corresponds to identifying the two associated initial ideal graphs. We focus on the case where the ``overlap'' is two, that is, $|A_1 \cap A_2| = 2$.

\begin{definition}
   Let $G_{A_1, B_1}$ and $G_{A_2, B_2}$ be two initial ideal graphs with $A_1 \sqcup B_1 = A_2 \sqcup B_2 = V$ and $A_1 \cap A_2 = \{j_1, j_2\}$. We construct a \emph{glued hypergraph identified via $V$} denoted by $G_{A_1, B_1} \times_{V} G_{A_2,B_2}$ as follows: 
   \begin{itemize}
       \item The vertex set is $V(G_{A_1,B_1}) \cup V(G_{A_2,B_2}) \backslash \{\text{isolated vertices of } V(G_{A_1,\emptyset}) \cup V(G_{A_2,\emptyset})  \}$.
       \item The hyperedges of size 2 are 
        all those of $G_{A_1, B_1}$ and $G_{A_2,B_2}$ which do not contain vertex $j_1 j_2$.
       \item The hyperedges of size 3 are those $\{i, j_1 j_2, k\}$, where $i \in V(G_{A_1, \emptyset})$ and and $ k \in V(G_{A_2,\emptyset})$.
   \end{itemize}
\end{definition}
Note that the glued hypergraph $G_{A_1, B_1} \times_{V} G_{A_2,B_2}$ has $|A_1 \backslash \{j_1, j_2\}||A_2 \backslash \{j_1, j_2\}|$ isolated vertices $xy$ where $x \in A_1 \backslash \{j_1, j_2\}$ and $y \in A_2 \backslash \{j_1, j_2\}$. In particular, these correspond to the degree-one monomials $M_{\leq 1}(G)$ from Theorem~\ref{thm:variety-of-model}.

\begin{example}
The initial ideal graph for the sparse one-factor analysis model with $A_1=\{1,2,3,4,5\}$ and $B_1 =\{6,7\}$ is depicted on the right of Figure~\ref{fig: complete graph with isolated vertices initial ideal graph}. Identifying two sparse one-factor analysis graphs from Figure~\ref{fig: separating of two one-factors} gives rise to the sparse two-factor analysis graph from Figure~\ref{fig:graph_5_2_4}. Here we have $A_2 = \{4,5,6,7\}$ and $B_2 = \{1,2,3\}$. This corresponds to identifying the initial ideal graphs $G_{A_1,B_1}$ and $G_{A_2,B_2}$ via the vertex $45$ as in Figure~\ref{fig: identifying initial ideal graphs}. ${\color{MyGreen}\{12,45,67\}}, {\color{MyRed}\{23,45,67\}}, {\color{MyBlue}{\{13,45,67\}}}$ are the hyperedges of size 3 and the rest are the hyperedges of size 2 of two initial ideal graphs which do not contain the vertex $45$. Since $46$ and $57$ are crossing edges in the complete graph on 4 vertices, they are not vertices of the glued hypergraph.

\begin{figure}[ht]
\begin{tikzpicture}[scale=2]
  \graph [nodes={circle, draw, inner sep=1pt, minimum size=1em}, clockwise, radius=1.7cm, n=10] {
    34/"34",
    35/"35",
    45/"45",
    12/"12",
    13/"13",
    14/"14",
    15/"15",
    23/"23",
    24/"24",
    25/"25"
  };
  \node[circle, draw, inner sep=1pt, minimum size=1em] (16) at (-2.2,1) {16};
  \node[circle, draw, inner sep=1pt, minimum size=1em] (17) at (-2.2,0.6) {17};
  \node[circle, draw, inner sep=1pt, minimum size=1em] (26) at (-2.2,0.2) {26};
  \node[circle, draw, inner sep=1pt, minimum size=1em] (27) at (-2.2,-0.2) {27};
  \node[circle, draw, inner sep=1pt, minimum size=1em] (36) at (-2.2,-0.6) {36};
  \node[circle, draw, inner sep=1pt, minimum size=1em] (37) at (-2.2,-1) {37};
  \draw (12) -- (34);
  \draw (12) -- (35);
   \path (12) edge [bend right, MyGreen, line width=3]  (45);
  \path (13) edge [bend left, MyBlue, line width=3]  (45);
  \path (23) edge [MyRed, line width=3]  (45);
  \draw (14) -- (23);
  \draw (12) -- (34);
  \draw (15) -- (23);
  \draw (15) -- (24);  
  \draw (15) -- (34);
  \draw (25) -- (34);
  
  \begin{scope}[shift={(1.545,-0.166)}]
    \graph [nodes={circle, draw, inner sep=1pt, minimum size=1em}, clockwise, radius=1.7cm, n=6] {
      67/"67",
      57/"\textcolor{gray}{57}"[dashed,draw=gray],
      56/"56",
      47/"47",
      46/"\textcolor{gray}{46}"[dashed,draw=gray],
      45/"45"
    };

    \draw (47) -- (56);
    \path (45) edge [MyRed,line width=3] (67);
    \path (45) edge [bend left, MyBlue,line width=3] (67);
    \draw (45) edge [bend right, color=MyGreen,line width=3] (67);
  \end{scope}
\end{tikzpicture}

\caption{Glued hypergraph identified via 7 observable nodes.}
\label{fig: identifying initial ideal graphs}
\end{figure}
\end{example}

Let $I_1$ and $I_2$ be two ideals in a polynomial ring $\mathbb{R}[\textbf{x}]:=\mathbb{R}[x_1, \dots, x_n]$. We now  recall the definition of join of $I_1$ and $I_2$ from \cite{combinatorialsecant}. Introduce $2n$ new unknowns, grouped in 2 vectors $\textbf{y}_j = (y_{j1}, \ldots, y_{jn})$, $j \in \{1,2\}$ and consider the polynomial ring $\mathbb{R}[\textbf{x},\textbf{y}]$ in $2n+n$ variables. Moreover, let $I_j(\textbf{y}_j)$ be the image of the ideal $I_j$ in $\mathbb{R}[\textbf{x},\textbf{y}]$ under the map $ \textbf{x}\mapsto \textbf{y}_j$. Then the \emph{join} $I_1 \ast I_2$ is the elimination ideal 
$$\left(I_1(\textbf{y}_1) + I_2(\textbf{y}_2) + \langle y_{1i} + y_{2i} - x_i \ | \  1\leq i \leq n \rangle \right) \cap \mathbb{R}[\textbf{x}].$$
Given a factor analysis graph $G=(V \cup \mathcal{H}, D)$ with $|\mathcal{H}|=2$ latent nodes, we can identify it with two one-factor analysis graphs. By definition, we have that the ideal of invariants of the two-factor analysis model is equal to the join of the ideals of the one-factor models, that is, $I(G) = I_{A_1,B_1,1} \ast I_{A_2, B_2,1}$. In this section, we find a Gr\"obner basis of this join ideal if $ |\! \ch(h_1) \cap \ch(h_2)| = |A_1 \cap A_2| = 2$. We assume that $p\geq 4$, since the ideal of invariants is otherwise empty.\\

By \cite[Theorem 2.3]{SIMIS20001}, for any term order $\prec$, and any two ideals $I_1,I_2$, we have that $\initial_{\prec} (I_1 \ast I_2) \subseteq \initial_{\prec}(I_1) \ast \initial_{\prec}(I_2)$. Thus, if we find a collection of polynomials $\mathcal{G} \subset I_1 \ast I_2$ such that $\langle \initial_{\prec}(g) \ | \ g \in \mathcal{G} \rangle = \initial_{\prec}(I_1) \ast \initial_{\prec}(I_2)$, then we can deduce that $\mathcal{G}$ is a Gröbner basis with respect to the term order $\prec$ for $I_1 \ast I_2$. A term order $\prec$ is called \emph{2-delightful} for two ideals $I_1$ and $I_2$, when the equality $\initial_{\prec}(I_1 \ast I_2) = \initial_{\prec}(I_1) \ast \initial_{\prec}(I_2)$ holds. We next describe the join of the initial ideals of two sparse one-factor analysis models with overlap two with respect to any circular term order. 

\begin{lemma}\label{lem: initial ideal of a hypergraph}
    Let $I_{A_1, B_1,1}$ and $I_{A_2, B_2, 1}$ be the toric ideals of invariants of two sparse one-factor models with $|A_1 \cap A_2|=2$ and $\prec$ be any circular term order. Then $\initial_{\prec}(I_{A_1, B_1,1})\ast \initial_{\prec}(I_{A_2, B_2, 1})$ is the monomial edge ideal of the glued hypergraph $G_{A_1, B_2} \times_{V} G_{A_2,B_2}$.
\end{lemma}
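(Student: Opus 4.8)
The plan is to compute the join of the two monomial ideals $M_i := \initial_{\prec}(I_{A_i, B_i, 1})$ directly, working in Stanley--Reisner language and using the combinatorial description of joins of monomial ideals from \cite{combinatorialsecant}. By Proposition~\ref{prop: Gröbner basis for sparse one-factor}, each $M_i$ is a squarefree monomial ideal whose minimal generators are the degree-one monomials $\sigma_e$ for pairs $e$ meeting $B_i$, together with the degree-two monomials $\sigma_e \sigma_f$ for noncrossing pairs $\{e,f\}$ of $A_i$-chords in the circular embedding of $K_{|A_i|}$ from Definition~\ref{def: new graph}. Equivalently, $M_i$ is the Stanley--Reisner ideal of the complex $\Delta_i$ whose faces are the sets of pairwise-crossing $A_i$-chords (a chord meeting $B_i$ is already a singleton nonface). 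I would phrase the whole argument through $\Delta_1$ and $\Delta_2$.

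First I would record the shape of the join. Since $M_1, M_2$ are squarefree, their varieties $\mathbb{V}(M_i) = \bigcup_{F \in \Delta_i} L_F$ are unions of the coordinate subspaces $L_F = \{x : x_e = 0 \text{ for } e \notin F\}$. Because the join variety is the Zariski closure of the set of coordinatewise sums and $L_{F_1} + L_{F_2} = L_{F_1 \cup F_2}$, we get $\mathbb{V}(M_1 \ast M_2) = \bigcup_{F_1 \in \Delta_1,\, F_2 \in \Delta_2} L_{F_1 \cup F_2}$, again a union of coordinate subspaces. By the combinatorial description of joins of monomial ideals \cite{combinatorialsecant}, $M_1 \ast M_2$ is itself a monomial ideal and remains squarefree; hence it is the Stanley--Reisner ideal of the complex $\Delta_{12}$ whose faces are exactly the sets $F_1 \cup F_2$ with $F_i \in \Delta_i$. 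Consequently its minimal generators are the squarefree monomials $\sigma_S$ indexed by the \emph{minimal nonfaces} $S$ of $\Delta_{12}$, i.e. the minimal sets of chords that cannot be written as $S = S_1 \cup S_2$ with each $S_i$ a set of pairwise-crossing $A_i$-chords.

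The heart of the proof is to enumerate these minimal nonfaces and match them with the hyperedges of $G_{A_1,B_1} \times_{V} G_{A_2, B_2}$. I would classify each chord $e$ by where it may be placed: forced into $\Delta_1$ if $e \subseteq A_1$ but $e \not\subseteq A_2$; forced into $\Delta_2$ symmetrically; \emph{free} if $e \subseteq A_1 \cap A_2$, which by the hypothesis $|A_1 \cap A_2| = 2$ forces $e = \{j_1,j_2\}$; and forbidden from both sides if $e \subseteq$ neither. This yields three families of minimal nonfaces:
\begin{itemize}
\item[(1)] singletons $\{e\}$ with one endpoint in $A_1 \setminus \{j_1,j_2\}$ and one in $A_2 \setminus \{j_1,j_2\}$, giving the degree-one monomials, i.e. the isolated vertices of the glued hypergraph and the generators of $M_{\leq 1}(G)$ in Theorem~\ref{thm:variety-of-model};
\item[(2)] pairs $\{e,f\}$ that are noncrossing within $A_1$ or within $A_2$ with neither chord equal to $\{j_1,j_2\}$, giving the size-two hyperedges (if one chord is $\{j_1,j_2\}$, the split places it on the opposite side and the pair is a face);
\item[(3)] triples $\{i, \{j_1,j_2\}, k\}$ with $i$ an $A_1$-chord and $k$ an $A_2$-chord, each noncrossing with $\{j_1,j_2\}$, giving the size-three hyperedges, the obstruction being that the free chord $\{j_1,j_2\}$ can be placed on only one side.
\end{itemize}
I would then show no minimal nonface has size $\geq 4$: after removing singleton nonfaces, every chord is forced to a side except $\{j_1,j_2\}$, so non-coverability reduces either to a noncrossing pair on one forced side (size two) or to the placement conflict of $\{j_1,j_2\}$ against one non-crossing forced chord on each side (size three). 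This matches the three hyperedge types exactly.

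I expect the main obstacle to be (3) together with the size bound: making precise that the only genuinely two-sided obstruction is carried by the shared chord $\{j_1,j_2\}$, and that this forces every minimal nonface of size $\geq 3$ to consist of $\{j_1,j_2\}$ plus exactly one forced chord on each side failing to cross it. This is where $|A_1 \cap A_2| = 2$ is essential, as it makes $\{j_1,j_2\}$ the unique freely-placeable chord; with larger overlap there would be several such chords and the analysis would be far more delicate. A secondary point to handle carefully is the bookkeeping convention that an isolated vertex of the glued hypergraph contributes a degree-one generator, and the verification that the generators found above lie in $M_1 \ast M_2$ itself rather than merely in its radical -- which is precisely what the squarefreeness of joins of squarefree monomial ideals in \cite{combinatorialsecant} guarantees.
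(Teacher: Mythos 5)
Your argument is correct, and it reaches the same conclusion as the paper by working on the dual side of the same combinatorial machinery. The paper's proof passes through the irreducible (Alexander-dual) decomposition: it writes each $\initial_{\prec}(I_{A_i,B_i,1})$ as an intersection of variable ideals indexed by minimal vertex covers of $G_{A_i,B_i}$, applies the join--Alexander duality formula of \cite{combinatorialsecant} together with the fact that the join of two irreducible monomial ideals is the variable ideal on the intersection of their index sets, and then matches the resulting covers with the minimal vertex covers of the glued hypergraph. You instead work with the Stanley--Reisner complexes (faces $=$ complements of vertex covers), compute the join variety as a union of coordinate subspaces $L_{F_1\cup F_2}$, and enumerate the minimal nonfaces of the resulting complex directly. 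Since $(\nu\cap\mu)^{\complement}=\nu^{\complement}\cup\mu^{\complement}$, the two computations are literally dual to one another; in both, the essential external input is that the join of squarefree monomial ideals is again a squarefree monomial ideal with the predicted combinatorics, which you correctly flag as the reason your generators lie in $M_1\ast M_2$ and not merely in its radical. What your route buys is that the identification with the glued hypergraph---which the paper's proof asserts rather tersely---is carried out explicitly: your trichotomy (chords forced to one side, the unique free chord $\{j_1,j_2\}$, and forbidden chords) cleanly explains why minimal nonfaces have size at most three and why the size-three ones are exactly $\{e,\{j_1,j_2\},f\}$ with $e$ noncrossing with $\{j_1,j_2\}$ in $K_{|A_1|}$ and $f$ noncrossing with it in $K_{|A_2|}$, which is precisely the condition appearing in the hexads of Theorem~\ref{thm: a Gröbby}. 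Two small bookkeeping caveats: your classification of singleton nonfaces implicitly assumes $V=A_1\cup A_2$; a chord with an endpoint in $B_1\cap B_2$ is also a degree-one generator, though such parentless nodes are excluded by the paper's standing conventions. And minimality of the size-three nonfaces uses that chords of $A_1$ through $j_1$ or $j_2$ cross $\{j_1,j_2\}$ under the paper's convention that edges sharing an endpoint cross, which is worth stating when you rule out such chords from the triples.
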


\begin{proof}
Let $I= \initial_{\prec}(I_{A_1, B_1,1})$ and $J= \initial_{\prec}(I_{A_2, B_2, 1})$. Consider the irreducible component decomposition of $I = \bigcap I_{\nu}$ and $J = \bigcap J_{\mu}$. Since $I$ and $J$ are monomial edge ideals, the irreducible components are the minimal vertex covers of $G_{A_1,B_1}$ and $G_{A_2,B_2}$, including the isolated vertices (\cite[Corollary 1.35]{van2013beginner}). We use the Alexander duality formula for the join of monomial ideals from \cite[Theorem 2.6]{combinatorialsecant}. Note that the vertex set $V(G_{V \backslash (A_1 \cap A_2),\emptyset})$ is the common isolated vertices of $G_{A_1,B_1}$ and $G_{A_2,B_2}$. By \cite[Lemma 2.3]{combinatorialsecant}, we obtain that the indices of the square-free irreducible monomial ideals $I_{\nu} \ast J_{\mu}$ is in the following form: a minimal vertex cover of $G_{A_1,B_1}$, a minimal vertex cover of $G_{A_2,B_2}$, $V(G_{V \backslash (A_1 \cap A_2),\emptyset})$. This corresponds to taking the intersection of generators of $I_{\nu}$ and $J_{\mu}$ as sets.

Note that this collection of vertices covers all minimal vertex covers of the glued hypergraph $G_{A_1, B_2} \times_{V} G_{A_2,B_2}$. By setting up the facets of the associated simplicial complex to be the maximal independent sets of the glued hypergraph, by \cite[Lemma 1.5.4]{herzog2011monomial}, we conclude that $\bigcap_{\nu,\mu} (I_{\nu} \ast J_{\mu})$ is the monomial edge ideal of the glued hypergraph.
\end{proof}

We next construct a Gröbner basis for two factor models with overlap two, that is, $|A_1 \cap A_2| =2$. We find a collection of polynomials $\mathcal{G}$ such that their initial terms with respect to any circular term order are the generators of the monomial edge ideal of the glued hypergraph, and thus this collection forms a Gröbner basis. We discuss the application of the ``2-delightful" strategy to cases where $|A_1 \cap A_2|\geq 3$ in Section~\ref{subsec:larger-intersections}.
We refer to the degree 3 generators below as \emph{hexads} because they are polynomials obtained from six observed random variables, analogous to tetrads, which are degree 2 generators obtained from four observed random variables.

\begin{theorem}\label{thm: a Gröbby}
The generators of a Gröbner basis for $I_{A_1,B_1,1} \ast I_{A_2, B_2,1}$ with respect to any circular term order for sparse two-factor  analysis models where $A_1 \cap A_2 = \{j_1,j_2\}$ comes in three types:
\begin{enumerate}
\item Degree-one monomial: $\sigma_{ik}$ is a generator, where $\pa(i) \cap \pa(j) = \emptyset$.
\item Tetrads: The binomial generators of the Gröbner basis of $I_{A_1,B_1,1}$ and $I_{A_2, B_2,1}$ with respect to any circular order that do not contain $\sigma_{j_1 j_2}$.
\item Hexads: Consider $i_1, i_2 \in A_1 \backslash \{j_1,j_2\}$ and $k_1,k_2 \in A_2 \backslash \{j_1,j_2\}$. Then 
$$\underline{\sigma_{k_1 k_2} \sigma_{i_1 i_2} \sigma_{j_1 j_2}} - \sigma_{k_1 k_2} \sigma_{j_1 i_2} \sigma_{j_2 i_1} -\sigma_{i_1 i_2} \sigma_{j_1 k_2} \sigma_{j_2 k_1},$$ is a degree three generator, where $\{i_1, i_2\}, \{j_1, j_2\}$ and $\{j_1 ,j_2\}, \{k_1, k_2\}$ are non-crossing edges of the complete graphs on the vertices $A_1 \backslash  \{j_1, j_2\}$ and $A_2 \backslash  \{j_1, j_2\}$ respectively.
\end{enumerate}
\end{theorem}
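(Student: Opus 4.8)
The plan is to follow the ``$2$-delightful'' strategy recalled just before the statement. Write $I_1 = I_{A_1,B_1,1}$ and $I_2 = I_{A_2,B_2,1}$, fix a circular term order $\prec$, and recall that by \cite[Theorem 2.3]{SIMIS20001} we always have the containment $\initial_{\prec}(I_1 \ast I_2) \subseteq \initial_{\prec}(I_1)\ast\initial_{\prec}(I_2)$, while by Lemma~\ref{lem: initial ideal of a hypergraph} the right-hand side is the monomial edge ideal of the glued hypergraph $G_{A_1,B_1}\times_V G_{A_2,B_2}$. Its minimal generators come in exactly three shapes: degree-one monomials $\sigma_{ik}$ indexed by the isolated vertices $ik$, degree-two monomials $\sigma_{ab}\sigma_{cd}$ indexed by the size-two hyperedges (noncrossing pairs of $G_{A_1,B_1}$ and of $G_{A_2,B_2}$ that avoid the vertex $j_1 j_2$), and degree-three monomials $\sigma_{i_1i_2}\sigma_{j_1j_2}\sigma_{k_1k_2}$ indexed by the size-three hyperedges through $j_1 j_2$. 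The whole argument then reduces to two checks: (a) the listed collection $\mathcal{G}$ of the three types lies in $I_1\ast I_2$, and (b) the leading terms $\{\initial_{\prec}(g):g\in\mathcal{G}\}$ generate this monomial edge ideal. Combined with the SIMIS containment, (a) and (b) force the chain $\initial_{\prec}(I_1)\ast\initial_{\prec}(I_2) = \langle\initial_{\prec}(g):g\in\mathcal{G}\rangle \subseteq \initial_{\prec}(I_1\ast I_2)\subseteq \initial_{\prec}(I_1)\ast\initial_{\prec}(I_2)$ to collapse to equalities, which simultaneously shows $\prec$ is $2$-delightful and that $\mathcal{G}$ is a Gröbner basis.

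For (a) I would use the identity $I_1\ast I_2 = I(G)$, so that it suffices to check each listed polynomial vanishes on $F(G)$. Writing $a_x=\lambda_{x h_1}$ and $b_x=\lambda_{x h_2}$, the parametrization gives $\sigma_{xy}=a_xa_y+b_xb_y$ with $b_x=0$ for $x\in A_1\setminus A_2$ and $a_x=0$ for $x\in A_2\setminus A_1$. The degree-one monomials vanish since $\pa(i)\cap\pa(k)=\emptyset$ forces $\sigma_{ik}=0$. A tetrad not containing $\sigma_{j_1j_2}$ involves only $\sigma$-entries that are pure monomials in a single factor (because $\sigma_{j_1j_2}$ is the only entry among the nodes of $A_1$, respectively $A_2$, receiving contributions from both factors), so it reduces to the ordinary toric relation of the edge ring and vanishes. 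For a hexad, substitution yields $\sigma_{k_1k_2}=b_{k_1}b_{k_2}$, $\sigma_{i_1i_2}=a_{i_1}a_{i_2}$, $\sigma_{j_1i_2}=a_{j_1}a_{i_2}$, $\sigma_{j_2i_1}=a_{j_2}a_{i_1}$, $\sigma_{j_1k_2}=b_{j_1}b_{k_2}$, $\sigma_{j_2k_1}=b_{j_2}b_{k_1}$, and $\sigma_{j_1j_2}=a_{j_1}a_{j_2}+b_{j_1}b_{j_2}$; expanding $\sigma_{k_1k_2}\sigma_{i_1i_2}\sigma_{j_1j_2}$ along the two summands of $\sigma_{j_1j_2}$, the $a_{j_1}a_{j_2}$ part cancels the second monomial of the hexad and the $b_{j_1}b_{j_2}$ part cancels the third, so the hexad is identically zero.

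For (b) I would match leading terms to hyperedge generators. Using Theorem~\ref{thm: Gröbner basis of the full one-factor model} and the circular order, the underlined terms $\sigma_{ik}$, $\sigma_{ab}\sigma_{cd}$ and $\sigma_{i_1i_2}\sigma_{j_1j_2}\sigma_{k_1k_2}$ are indeed $\initial_{\prec}$ of the respective polynomials (for the hexad the two noncrossing conditions guarantee that the product of the three noncrossing pairs dominates the two ``crossing'' monomials). By Proposition~\ref{prop: Gröbner basis for sparse one-factor} the first two families are exactly the degree-one generators and the edge-avoiding degree-two generators of the edge ideal, and the hexad leading terms are exactly the size-three generators through $j_1j_2$. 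Hence $\langle\initial_{\prec}(g):g\in\mathcal{G}\rangle$ equals the monomial edge ideal of the glued hypergraph, closing the chain above.

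The main obstacle I expect is step (b), namely the bookkeeping showing that \emph{every} generator of the edge ideal is hit and that no generator is missing. The delicate point is the interaction of the overlap vertex $j_1j_2$ with the circular order: one must verify that size-two hyperedges arising from four-element subsets containing both $j_1$ and $j_2$ are correctly accounted for. This is precisely why the hexads must appear, and it relies on the overlap pair $\{j_1,j_2\}$ sitting adjacently in the circular embedding, so that $\{j_1,j_2\}$ is never a crossing pair. Otherwise a would-be tetrad would carry $\sigma_{j_1j_2}$ in its trailing term, leave the join (its value on $F(G)$ equals $-b_{j_1}b_{j_2}a_ba_d\neq 0$ for the offending subset $\{j_1,j_2,b,d\}$), and the corresponding degree-two monomial could only be recovered at degree three. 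Making this correspondence precise, and invoking Lemma~\ref{lem: initial ideal of a hypergraph} to certify that no further generators are needed, is the technical heart of the argument; the vanishing computations in (a) are routine by comparison.
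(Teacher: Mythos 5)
Your proposal is correct and follows essentially the same route as the paper: membership of the three families in the join (checked via the parametrization), identification of their leading terms with the generators of the monomial edge ideal of the glued hypergraph from Lemma~\ref{lem: initial ideal of a hypergraph}, and the containment $\initial_{\prec}(I_1 \ast I_2) \subseteq \initial_{\prec}(I_1)\ast\initial_{\prec}(I_2)$ from \cite[Theorem 2.3]{SIMIS20001} to collapse the chain. The paper's own proof is only a two-sentence sketch of this argument, so your version — including the explicit vanishing computations and the observation that the overlap pair $\{j_1,j_2\}$ must be noncrossing with the remaining nodes for the tetrad bookkeeping to close — is a faithful and more detailed account of the intended proof.
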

\begin{proof}
 This set of polynomials is in the join by the combinatorial definition of join of ideals. And by the previous lemma, their initials are exactly those \cite[Theorem 2.3]{SIMIS20001}. This concludes the proof. 
\end{proof} 
The theorem implies that if the set of children of one latent node is strictly contained in the other, then the generators consist of degree-one monomials and tetrads, and thus it is a toric ideal. This means equivalently that the (two) children of exactly one latent node are non-pure. 

\begin{corollary} \label{cor:proposed-equals-vanishing}
Let $|A_1 \cap A_2| =2$ for a sparse two-factor analysis model graph. Then the ideal $(M_{p,2} +  M_{\leq 1}(G)) \cap \mathbb{R}[\sigma_{ij},\ i < j]$ is equal to the join $I(G) = I_{A_1,B_1,1} \ast I_{A_2, B_2,1}$, and thus prime. In particular, $I(G)$ is generated by all off-diagonal minors of size at most 3 in $M_{p,2}$ and off-diagonal minors of size at most 2 in $M_{\leq 1}(G)$. 
\end{corollary}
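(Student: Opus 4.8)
The plan is to prove the claimed equality of ideals by two inclusions and then to read off the generation statement and primality as consequences. Write $J := (M_{p,2} + M_{\leq 1}(G)) \cap \mathbb{R}[\sigma_{ij},\ i<j]$. The inclusion $J \subseteq I(G)$ is already available: as observed after Theorem~\ref{thm:variety-of-model}, every element of $J$ vanishes on $F(G)$ because $\pi(F(G)) \subseteq \overline{\pi(\mathcal{L})} = \mathbb{V}(J)$. The substantive direction is $I(G) \subseteq J$, and for this I would use that $I(G) = I_{A_1,B_1,1} \ast I_{A_2,B_2,1}$ carries the explicit Gröbner basis described in Theorem~\ref{thm: a Gröbby}; it then suffices to verify that each of its three types of generators lies in $J$, all of which are manifestly free of diagonal variables.

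The degree-one monomials (type~1) are exactly the off-diagonal $1\times1$ minors $\det(\Sigma_{\{i\},\{k\}})=\sigma_{ik}$ with $\pa(i)\cap\pa(k)=\emptyset$, so they lie in $M_{\leq 1}(G)$ by definition. For the tetrads (type~2), I would note that the two Gröbner generators on four indices $a<b<c<d$ are, up to sign, the off-diagonal $2\times2$ minors $\det(\Sigma_{\{a,d\},\{b,c\}})$ and $\det(\Sigma_{\{a,b\},\{c,d\}})$. Since these generators do not contain $\sigma_{j_1 j_2}$, at most one of the overlap nodes $j_1,j_2$ appears among the four indices; consequently, for each relevant splitting into rows $R$ and columns $C$, the common parent set $\pa(R)\cap\pa(C)$ reduces to the single factor $\{h_1\}$ (resp.\ $\{h_2\}$). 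Hence each such minor is separated by one factor and lies in $M_{\leq 1}(G)$.

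The key step, and the main obstacle, is the hexads (type~3); here I would recognize each three-term hexad as an off-diagonal $3\times3$ minor modulo $M_{\leq 1}(G)$. Since $i_1,i_2$ are pure children of $h_1$ and $k_1,k_2$ are pure children of $h_2$, the model forces $\sigma_{i_1 k_2}=\sigma_{i_2 k_1}=0$, and these two monomials are degree-one generators of $M_{\leq1}(G)$. Expanding
\[
\det(\Sigma_{\{i_1,j_1,k_1\},\{i_2,j_2,k_2\}})
=\det\begin{pmatrix}\sigma_{i_1 i_2} & \sigma_{i_1 j_2} & \sigma_{i_1 k_2}\\ \sigma_{i_2 j_1} & \sigma_{j_1 j_2} & \sigma_{j_1 k_2}\\ \sigma_{i_2 k_1} & \sigma_{j_2 k_1} & \sigma_{k_1 k_2}\end{pmatrix}
\]
and collecting the terms, the part not divisible by $\sigma_{i_1 k_2}$ or $\sigma_{i_2 k_1}$ reproduces the hexad exactly, while the remaining terms are multiples of $\sigma_{i_1 k_2}$ and $\sigma_{i_2 k_1}$ with degree-two coefficients. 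As the six indices are pairwise distinct (using $i_1\neq i_2$, $k_1\neq k_2$, and $A_1\cap A_2=\{j_1,j_2\}$), the left-hand side is a genuine off-diagonal $3\times3$ minor, hence an element of $M_{p,2}$. Therefore the hexad lies in $M_{p,2}+M_{\leq 1}(G)$, and so in $J$. The subtlety is purely in spotting that the unfamiliar trinomial is the off-diagonal expansion of a $3\times3$ determinant once the vanishing covariances are restored.

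Combining the three cases yields $I(G)\subseteq J$, hence $I(G)=J$. The ``In particular'' clause is then immediate from the same computations: every Gröbner generator of $I(G)$ is a combination of off-diagonal minors of size at most $3$ from $M_{p,2}$ and off-diagonal minors of size at most $2$ from $M_{\leq 1}(G)$, while conversely each such minor vanishes on $F(G)$ and lies in $\mathbb{R}[\sigma_{ij},\ i<j]$, hence in $J=I(G)$; so these minors generate $I(G)$. Finally, $I(G)=I_{A_1,B_1,1}\ast I_{A_2,B_2,1}$ is prime because it is a join of prime toric ideals whose varieties are geometrically irreducible: under the invertible substitution $\sigma=\mathbf{y}_1+\mathbf{y}_2$ built into the join, it is the elimination ideal of the prime ideal $I_{A_1,B_1,1}(\mathbf{y}_1)+I_{A_2,B_2,1}(\mathbf{y}_2)$, and the elimination of a prime ideal is prime; equivalently, $\mathbb{V}(I(G))=\overline{\pi(\overline{\mathcal{L}})}$ is irreducible by the proof of Theorem~\ref{thm:variety-of-model}.
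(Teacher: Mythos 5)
Your proposal is correct and follows essentially the same route as the paper's proof: both directions are handled by combining Theorem~\ref{thm:variety-of-model} with a generator-by-generator check of the Gröbner basis from Theorem~\ref{thm: a Gröbby}, the key point being that each hexad is an off-diagonal $3\times 3$ minor modulo the degree-one generators $\sigma_{i_1k_2},\sigma_{i_2k_1}\in M_{\leq 1}(G)$ (which you verify explicitly, whereas the paper only asserts it), and primality follows from the join of primes being prime (the paper cites \cite[Prop.~1.2]{SIMIS20001}, you essentially reprove it). One small imprecision: for the tetrads, the absence of $\sigma_{j_1j_2}$ does \emph{not} imply that at most one of $j_1,j_2$ occurs among the four indices --- e.g.\ $\sigma_{il}\sigma_{jk}-\sigma_{ik}\sigma_{jl}$ omits $\sigma_{ij}$ even when $\{j_1,j_2\}=\{i,j\}$; the correct observation is that $j_1$ and $j_2$ are then never split between the row set and the column set of the corresponding $2\times 2$ minor, which still forces $h_2\notin\pa(R)\cap\pa(C)$ and hence membership in $M_{\leq 1}(G)$, so your conclusion stands.
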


\begin{proof}

Let $J = (M_{p,2} +  M_{\leq 1}(G)) \cap \mathbb{R}[\sigma_{ij},\ i < j]$. By Theorem~\ref{thm:variety-of-model}, $J \subseteq I_{A_1,B_1,1} \ast I_{A_2, B_2,1}$. Thus it is enough to show $I:= I_{A_1,B_1,1} \ast I_{A_2, B_2,1} \subseteq J$. The degree-one and degree-two generators of $I_{A_1,B_1,1} \ast I_{A_2, B_2,1}$ are in $M_{\leq 1}(G)$. The degree three generators can be described as a $3 \times 3$ off-diagonal minor.
Since $I_1$ and $I_2$ are both prime ideals, $I$ is also prime \cite[Proposition 1.2]{SIMIS20001}.
\end{proof}
\begin{example}
Consider the sparse two-factor analysis graph $G$ with $A_1 \cap A_2 = \{4,5\}$ from Figure~\ref{fig:graph_5_2_4}. The degree-one monomials of the generators of the Gröbner basis constructed in Theorem~\ref{thm: a Gröbby} are $\sigma_{16}, \sigma_{17}, \sigma_{26}, \sigma_{27},\sigma_{36}, \sigma_{37}$. These are the same as the set $M_{\leq 1}(G)$. The tetrads are all the ones in form (\ref{eq: tetrads}) that do not contain the $\sigma_{45}$. For example, $\underline{\color{MyBlue}\sigma_{12} \sigma_{45}} - {\color{MyRed}\sigma_{14} \sigma_{25}}$ from Example~\ref{ex: toric edge ideal of the complete graph with isolated vertices} is not a tetrad generator of the Gröbner basis for the ideal of the sparse two-factor analysis. Finally, we obtain the following three hexads to form the Gröbner basis: 
\begin{align*}
\begin{split}
&\sigma_{67}\sigma_{12}\sigma_{45} - \sigma_{67}\sigma_{24}\sigma_{15} - \sigma_{12}\sigma_{47}\sigma_{56},\\
&\sigma_{67}\sigma_{13}\sigma_{45} - \sigma_{67} \sigma_{34} \sigma_{15} - \sigma_{13}\sigma_{47} \sigma_{56}, \\
&\sigma_{67} \sigma_{23} \sigma_{45} - \sigma_{67} \sigma_{34} \sigma_{25} - \sigma_{23} \sigma_{47} \sigma_{56}. 
\end{split}
\end{align*}
\end{example}

\section{Conclusion and Open Questions} \label{subsec:computational-results}

In this paper, we derived novel results on the algebro-geometric aspects of sparse factor analysis models. We first proved upper and lower bounds for the dimension. While the upper bound holds for arbitrary models, the lower bound holds for models that satisfy a minimal level of sparsity, which we formalize in the ZUTA condition. In many cases, upper and lower bounds coincide and one obtains a formula for the dimension. In particular, our study reveals that  sparse factor analysis models, unlike full factor analysis models, may not have expected dimension. Then, we studied the ideal of invariants of sparse factor models with two latent nodes. We presented an ideal that cuts out the model and, moreover, we derived a Gröbner basis for models with at most overlap two, i.e., models where at most two observed nodes have more than one latent parent. On a technical level, we extended the \emph{delightful} strategy, which was previously applied to secants, to joins of ideals.

In what follows, we outline some possible future directions and open questions that arose from our paper. All the examples below can be reproduced by our code on \href{https://mathrepo.mis.mpg.de/sparse-factor-analysis/}{MathRepo}.

\begin{table}[ht]
\centering
\begin{tabular}{|c | c | c | c|} 
 \hline
 intersection size & degree & \# indeterminates & \# monomials  \\ [0.5ex] 
 \hline
 3 & 3 & 8 & 4 \\ 
 \scriptsize{(p=7)} & 5 & 9 & 6 \\
 \hline
 4 & 5 & 11 & 6 \\
 \scriptsize{(p=8)} & 5 & 11 & 8 \\
  & 5 & 12 & 10 \\
  & 5 & 12 & 12 \\
 \hline
\end{tabular}

\caption{Polynomials in Gröbner bases for sparse two-factor models with children sets of overlap $3$ or $4$. Each line reports the degrees, number of indeterminates, and number of monomials of one type of polynomial in the Gröbner basis.}
\label{table:0}
\end{table}

\subsection{Sparse factor models with larger overlaps}
\label{subsec:larger-intersections}

The circular term order is not always 2-delightful for the ideal of invariants of sparse two-factor analysis models for examples where $|\ch(h_1) \cap \ch(h_2)| = |A_1 \cap A_2| \geq 3$. Consider the sparse two-factor analysis graph from Figure \ref{fig:graph_5_2_4} with the additional edge $h_2 \rightarrow v_3$. The generators of a Gröbner basis with respect to any circular term order of the join has degrees one, two, three (degree-one monomials, tetrads, and non-hexads) and five, whereas the join of initial ideals is generated by at most degree three generators. In particular, one of the generators has the form
\begin{align*}
 &\sigma_{45}\sigma_{67}\sigma_{57}\sigma_{14}\sigma_{36}-\sigma_{45}\sigma_{67}\sigma_{15}\sigma_{36}\sigma_{47}+\sigma_{56}\sigma_{67}\sigma_{35}\sigma_{14}\sigma_{47}\\
-&\sigma_{56}\sigma_{57}\sigma_{14}\sigma_{36}\sigma_{47}-\sigma_{67}\sigma_{35}\sigma_{46}\sigma_{57}\sigma_{14}+\sigma_{46}\sigma_{57}\sigma_{15}\sigma_{36}\sigma_{47}.  
\end{align*}
This is a polynomial of degree $5$ in $9$ indeterminates involving $6$ monomial terms. The monomial terms coincide with monomial terms in a pentad \cite{kelley1935essential, AlgebraicFactorAnalysis} although the pentad has twelve terms. The missing monomials of the pentad are reduced by the elements in $M_{\leq 1}(G)$. 
We list  the types of homogeneous polynomials when we compute Gröbner bases for larger intersections among the children sets of the latent variables in Table \ref{table:0}. 
Our computations support the following conjecture for factor analysis models with two latent variables.

\begin{conjecture} \label{conj:generators}
    The ideal of the sparse two-factor analysis model corresponding to graph $G$ is generated by off-diagonal $3\times3$-minors, pentads, and the polynomials in $M_{\leq 1}(G)$.
\end{conjecture}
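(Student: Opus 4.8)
The plan is to upgrade the variety statement of \Cref{thm:variety-of-model} to an ideal-theoretic one, exactly as was done for overlap two in \Cref{cor:proposed-equals-vanishing}. Write $R=\mathbb{R}[\sigma_{ij},\ i<j]$, set $J=(M_{p,2}+M_{\le1}(G))\cap R$, and let $K\subseteq J$ be the ideal generated by the three proposed families: off-diagonal $3\times3$-minors, pentads, and the off-diagonal $1\times1$ and $2\times2$-minors in $M_{\le1}(G)$. By \Cref{thm:variety-of-model} we have $K\subseteq J\subseteq I(G)$ and $\mathbb{V}(J)=\mathbb{V}(I(G))$ over $\mathbb{C}$; moreover $I(G)$ is a vanishing ideal, hence radical, so $I(G)=\sqrt{J}$. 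Consequently the conjecture reduces to a single clean claim: \emph{the three families form a Gröbner basis of $J$ with respect to a suitable term order, and the associated initial ideal is squarefree.} Indeed, being a Gröbner basis gives $K=J$, and a squarefree initial ideal forces $J$ to be radical, whence $K=J=\sqrt{J}=I(G)$.

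First I would fix an elimination term order that ranks the diagonal unknowns $\sigma_{ii}$ above all off-diagonal ones and restricts to a circular order on $R$, so that a Gröbner basis of $M_{p,2}+M_{\le1}(G)$ in $\mathbb{R}[\sigma_{ij},\ i\le j]$ intersected with $R$ yields a Gröbner basis of $J$. For the dense part $M_{p,2}$ I would invoke the description of $M_{p,2}\cap R$ by off-diagonal $3\times3$-minors and pentads from \cite{brouwer2011}, together with the known squarefree initial ideals of symmetric determinantal ideals under diagonal term orders. For the sparse part, the generators of $M_{\le1}(G)$ are, by the trek-separation argument in the proof of \Cref{thm:variety-of-model}, the monomials $\sigma_{ij}$ with $\pa(i)\cap\pa(j)=\emptyset$ and the off-diagonal $2\times2$-minors that vanish because $A,B$ are separated by a single factor; their initial terms are squarefree monomials. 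The core is then a Buchberger-style analysis: show that every $S$-polynomial among these generators reduces to zero, so that no generator of degree exceeding five is created and the initial ideal remains squarefree. The hexads of \Cref{cor:proposed-equals-vanishing} already illustrate that $S$-pairs between a tetrad and a separation minor reduce to an off-diagonal $3\times3$-minor; the new phenomenon at overlap $\ge3$ is that $S$-pairs feeding into the pentads produce \emph{truncated} pentads, i.e.\ pentads several of whose twelve monomials vanish on the sparse model. I would show that each such truncation reduces to zero modulo the full (dense) pentad together with the monomials and separation minors in $M_{\le1}(G)$ — precisely the reduction observed in the degree-five example of \Cref{subsec:larger-intersections}.

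Because a direct global Buchberger computation is unwieldy for arbitrary $G$, I would organize it inductively, peeling off one observed node at a time using the tripartition $V=V_1\sqcup V_2\sqcup V_3$ and the factorization $\Sigma=\Lambda_1\Sigma_1\Lambda_1^{\top}$ from the proof of \Cref{thm:variety-of-model}. Restricting to the principal submatrix indexed by $V_2\cup V_3$ lowers the overlap, so that the overlap-two case of \Cref{cor:proposed-equals-vanishing} serves as the base of an induction on $|A_1\cap A_2|$, while the extra row and column contributed by $V_1$ contribute only monomials $\sigma_{ij}$ (from the block $\Sigma_{V_1,V_3}=0$) and $3\times3$-minors, both already among the claimed generators. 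A more structural alternative is to compute a primary decomposition of $J$: the monomial part $\langle\sigma_{ij}:\pa(i)\cap\pa(j)=\emptyset\rangle$ splits $\mathbb{V}(J)$ into components on which $G$ effectively becomes a denser two-factor graph, on each of which one expects the component ideal to be prime and cut out by minors and pentads, recombining to a radical $J$.

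The main obstacle is the uniform control of the pentad--$M_{\le1}(G)$ interaction. Showing that the finitely many \emph{dense} pentads, despite many of their monomials vanishing identically on the sparse model, still generate all of the degree-five part of $I(G)$ modulo the lower-degree generators — equivalently, that no genuinely new invariant of degree five or higher survives the truncation — is where a case-by-case combinatorial argument must be replaced by a structural one. Concretely, the difficulty is to prove that $\initial_\prec(J)$ is generated in degrees at most five and is squarefree for every sparse two-factor graph $G$. The failure of the $2$-delightful property for overlap $\ge3$ noted in \Cref{subsec:larger-intersections} shows that the clean join-of-initial-ideals shortcut of \Cref{lem: initial ideal of a hypergraph} is unavailable here, so this squarefreeness must be established by a direct, and presently elusive, analysis of the reductions.
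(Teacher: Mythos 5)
You are attempting to prove what is, in the paper, exactly \Cref{conj:generators}: the paper offers no proof of this statement, only computational evidence (the Gröbner bases reported in \Cref{table:0} and the truncated-pentad example in \Cref{subsec:larger-intersections}), so the question is whether your argument stands on its own. It does not. Your reductive frame is essentially sound: with $J=(M_{p,2}+M_{\le 1}(G))\cap\mathbb{R}[\sigma_{ij},\ i<j]$ one has $J\subseteq I(G)$ and $\mathbb{V}(J)=\mathbb{V}(I(G))$ by \Cref{thm:variety-of-model}, vanishing ideals of point sets are radical, so (after the routine contraction from $\mathbb{C}[\sigma]$ back to $\mathbb{R}[\sigma]$) $I(G)=\sqrt{J}$; hence it would indeed suffice to exhibit a term order for which the three conjectured families form a Gröbner basis of $J$ with squarefree initial ideal. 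But that step is the entire mathematical content of the conjecture, and you explicitly leave it open (``presently elusive''). The only evidence you cite for the pentad--$M_{\le 1}(G)$ reductions is the paper's own single degree-five example; nothing in your text shows that the S-polynomial analysis terminates in degree at most five, or that $\initial_\prec(J)$ is squarefree, uniformly in $G$. Note also that the paper's computations for overlaps $3$ and $4$ exhibit several distinct types of degree-five generators (with $6$, $8$, $10$, and $12$ monomials), all of which such an analysis would have to produce and control. So this is a program with an acknowledged hole, not a proof.

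Moreover, the one genuinely new technical device you propose --- the induction that peels off observed nodes --- fails at its first step. You claim that restricting to the principal submatrix indexed by $V_2\cup V_3$ lowers the overlap $|A_1\cap A_2|$. It does not: in the tripartition from the proof of \Cref{thm:variety-of-model}, $V_1=\ch(h_1)\setminus\ch(h_2)$, $V_3=\ch(h_2)\setminus\ch(h_1)$, and the entire overlap $\ch(h_1)\cap\ch(h_2)$ lies inside $V_2$. Deleting $V_1$ removes only children exclusive to $h_1$ and leaves $|A_1\cap A_2|$ unchanged, so your induction on the overlap with \Cref{cor:proposed-equals-vanishing} as base case never decreases its parameter. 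To reduce the overlap one must delete nodes of $V_2$, and it is precisely those deletions that create the truncated pentads you need to control, so the difficulty reappears rather than disappears. Your alternative route via primary decomposition is likewise only a hope (``one expects the component ideal to be prime'' is not an argument). A correct proof must still supply uniform combinatorial control of $\initial_\prec(J)$ --- for instance a replacement for the 2-delightful mechanism of \Cref{lem: initial ideal of a hypergraph}, which, as you correctly note and the paper verifies, is unavailable once $|A_1\cap A_2|\ge 3$.
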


Although the ``delightful strategy'' with circular term orders is helpful for Gröbner basis, we observed that it fails for sparse two-factor analysis models with $|\ch(h_1) \cap \ch(h_2)| \geq 3$. However, since by \cite[Prop.~2.4]{combinatorialsecant}, the join of monomial ideals is monomial, one may consider constructing the join $\initial_{\prec}(I_{A_1, B_1,1})\ast \initial_{\prec}(I_{A_2, B_2, 1})$ as the monomial edge ideal of another hypergraph with respect to another term order. 

\begin{question*}
    Is there a 2-delightful term order for sparse two-factor analysis models? In other words, is there a term order $\prec$ such that $\initial_{\prec}(I_{A_1,B_1,1}\ast I_{A_2, B_2, 1}) = \initial_{\prec}(I_{A_1, B_1,1})\ast \initial_{\prec}(I_{A_2, B_2, 1})$?
\end{question*}

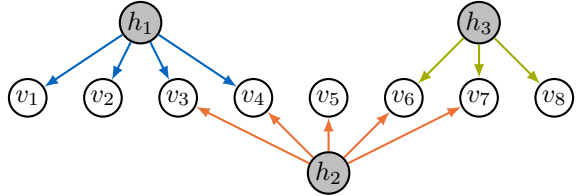
\begin{figure}[b]
\begin{tikzpicture}[align=center]
    \node[fill=lightgray] (h1) at (-2,1) {$h_1$};
    \node[fill=lightgray] (h2) at (0.5,-1) {$h_2$};
    \node[fill=lightgray] (h3) at (2.5,1) {$h_3$};
    
    \node[] (1) at (-3.5,0) {$v_1$};
    \node[] (2) at (-2.5,0) {$v_2$};
    \node[] (3) at (-1.5,0) {$v_3$};
    \node[] (4) at (-0.5,0) {$v_4$};
    \node[] (5) at (0.5,0) {$v_5$};
    \node[] (6) at (1.5,0) {$v_6$};
    \node[] (7) at (2.5,0) {$v_7$};
    \node[] (8) at (3.5,0) {$v_8$};    
    \draw[MyBlue] [-latex] (h1) edge (1);
    \draw[MyBlue] [-latex] (h1) edge (2);
    \draw[MyBlue] [-latex] (h1) edge (3);
    \draw[MyBlue] [-latex] (h1) edge (4);
    \draw[MyRed] [-latex] (h2) edge (3);
    \draw[MyRed] [-latex] (h2) edge (4);
    \draw[MyRed] [-latex] (h2) edge (5);
    \draw[MyRed] [-latex] (h2) edge (6);
    \draw[MyRed] [-latex] (h2) edge (7);
    \draw[MyGreen] [-latex] (h3) edge (6);
    \draw[MyGreen] [-latex] (h3) edge (7);
    \draw[MyGreen] [-latex] (h3) edge (8);
    \end{tikzpicture}
    \caption{A factor analysis graph with $3$ latent factors and two overlaps.}
\label{fig:latentare3}
\end{figure}
\subsection{Sparse factor models with more than two latent factors}
The generators introduced in Theorem \ref{thm: a Gröbby} can be partially used for sparse factor analysis models with more than two latent nodes where we have non-empty intersections only for consecutive intersections $|\ch(h_i) \cap \ch(h_{i+1})| = |A_i \cap A_{i+1}|=2$ with $i \in [m-1]$. Consider for instance the graph $G$ in Figure \ref{fig:latentare3}. Since the induced subgraph $G[\{h_3\} \cap V] \subset G$ gives rise to the toric ideal of $K_3$ and isolated vertices $[5]$, we obtain the generators of a Gröbner basis with respect to any circular term order in three types as follows: 
\begin{enumerate}
\item 11 degree-one monomials $\sigma_{ij}$, where $\pa(i) \cap \pa(j) = \emptyset$:
\[
\sigma_{15},\sigma_{16},\sigma_{17},\sigma_{18},\sigma_{25},\sigma_{26},\sigma_{27},\sigma_{28},\sigma_{38},\sigma_{48},\sigma_{58},
\]
\item 6 tetrads that do not contain $\sigma_{34}$ or $\sigma_{67}$:
\begin{align*}
\sigma_{47}\sigma_{56}&-\sigma_{46}\sigma_{57}, & \sigma_{37}\sigma_{56}&-\sigma_{36}\sigma_{57}, & \sigma_{37}\sigma_{46}&-\sigma_{36}\sigma_{47},\\ 
\sigma_{37}\sigma_{45}&-\sigma_{35}\sigma_{47}, & \sigma_{36}\sigma_{45}&-\sigma_{35}\sigma_{46},  & \sigma_{14}\sigma_{23}&-\sigma_{13}\sigma_{24},
\end{align*}
\item and 2 hexads:
\begin{equation*}
\sigma_{12}\sigma_{34}\sigma_{57}-\sigma_{12}\sigma_{35}\sigma_{47}-\sigma_{13}\sigma_{24}\sigma_{57}, \quad
\sigma_{12}\sigma_{34}\sigma_{56}-\sigma_{12}\sigma_{35}\sigma_{46}-\sigma_{13}\sigma_{24}\sigma_{56}.
\end{equation*}
\end{enumerate}
 If we add one more observable node $v_9$ and the edge $h_3 \rightarrow v_9$, we obtain degree-one monomials, tetrads, hexads, and a degree four generator in ten indeterminates which seems to have a combinatorial structure as in the hexad case, e.g.,
\begin{align*}
\sigma_{12}\sigma_{34}\sigma_{67}\sigma_{89}-\sigma_{12}\sigma_{34}\sigma_{68}\sigma_{79}-\sigma_{12}\sigma_{89}\sigma_{36}\sigma_{47}-\sigma_{67}\sigma_{89}\sigma_{13}\sigma_{24}+\sigma_{13}\sigma_{24}\sigma_{68}\sigma_{79} .  
\end{align*}
Moreover, we obtain that the circular term order is 3-delightful, i.e.,
\[
\initial_{\prec}(I_{A_1,B_1,1} \ast I_{A_2,B_2,1} \ast I_{A_3,B_3,1}) = \initial_{\prec}(I_{A_1,B_1,1}) \ast \initial_{\prec}(I_{A_2,B_2,1}) \ast \initial_{\prec}(I_{A_3,B_3,1}).
\]
Adding a fourth latent variable $h_4$ while keeping the cardinality of intersections $2$ will also give rise to polynomials of degree $5$ in $13$ indeterminates with $8$ summands, like
\begin{multline*}
    \sigma_{12}\sigma_{34}\sigma_{67}\sigma_{910}\sigma_{1112}-\sigma_{12}\sigma_{34}\sigma_{67}\sigma_{911}\sigma_{1012}-
    \sigma_{12}\sigma_{34}\sigma_{1112}\sigma_{69}\sigma_{710}-\sigma_{12}\sigma_{910}\sigma_{1112}\sigma_{36}\sigma_{47}+ \\
    \sigma_{12}\sigma_{911}\sigma_{1012}\sigma_{36}\sigma_{47}-\sigma_{67}\sigma_{910}\sigma_{1112}\sigma_{13}s_{24}+\sigma_{67}\sigma_{13}\sigma_{24}\sigma_{911}\sigma_{1012}+\sigma_{1112}\sigma_{13}\sigma_{24}\sigma_{69}\sigma_{710}.
\end{multline*}
We summarize the findings for the maximal degree polynomials we have computed based on the number of latent nodes in Table \ref{table:1}. 

\begin{table}[ht]
\centering
\begin{tabular}{|c | c | c | c|} 
 \hline
 \# latent nodes & degree & \# indeterminates & \# monomials  \\ [0.5ex] 
 \hline
 0 & 1 & 1 & 1 \\ 
 1 & 2 & 4 & 2 \\ 
 2 & 3 & 7 & 3 \\
 3 & 4 & 10 & 5 \\
 4 & 5 & 13 & 8 \\
 \hline
\end{tabular}

\caption{Degrees, number of variables and number of terms for different latent nodes when there are at most $2$ intersections.}
\label{table:1}
\end{table}

\begin{question*}
  Can we show that this behavior generalizes for more hidden variables with intersection two?
  That is, in the presence of $k$ latent nodes,
  is the polynomial of maximal degree a degree $k+1$ polynomial 
  in $3k+1$ variables
  which has $(k+2)$ Fibonacci number of terms? 
  In fact, using the delightful strategy, can we find a Gröbner basis with respect to any circular term order for sparse $k$-factor analysis models with more than two latent nodes where $|\ch(h_i) \cap \ch(h_{i+1})|=2$ for $i \in [m-1]$?
\end{question*}
Describing a Gröbner basis for this case would require a generalization of the glued hypergraph (Definition~\ref{def: new graph}) and of the construction of the polynomials as in Theorem~\ref{thm: a Gröbby}.

\section*{Acknowledgements}
This project was started at a workshop funded by the TUM Global Incentive Fund ``Algebraic Methods in Data Science''. It received funding from the European Research Council (ERC) under the European Union’s Horizon 2020 research and innovation programme (grant agreement No 883818) and from the German Federal Ministry of Education and Research and the Bavarian State
Ministry for Science and the Arts.  Nils Sturma acknowledges support from Munich Data Science Institute (MDSI) at the Technical University of Munich through the Linde/MDSI PhD Fellowship program.  The authors are grateful for helpful discussions with Danai Deligeorgaki, Alex Markham, Pratik Misra, Lisa Seccia, and Liam Solus.
\bibliographystyle{alpha}
\bibliography{bibliography}
\end{document}